\spnewtheorem{rmkl}[theorem]{Remark}{\bfseries}{\upshape}
\spnewtheorem{exml}[theorem]{Example}{\bfseries}{\upshape}
\spnewtheorem{ques}[theorem]{Question}{\bfseries}{\upshape}
\spnewtheorem*{ack}{Acknowledgment}{\bfseries}{\upshape}
\newenvironment{exm}%
{\begin{exml}}%
{\hfill$\Diamond$\end{exml}}
\newenvironment{rmk}%
{\begin{rmkl}}%
{\hfill$\Diamond$\end{rmkl}}
\newcommand{\abs}[1]{\left|#1\right|}
\newcommand{\set}[1]{\left\{#1\right\}}
\renewcommand{\P}{\mathbb{P}}
\newcommand{\C}{\mathbb{C}}
\renewcommand{\k}{\Bbbk}  
\newcommand{\M}{\mathsf{M}}    % a matroid
\newcommand{\U}{\mathsf{U}}    % the uniform matroid
\newcommand{\R}{\mathcal{R}}   % resonance varieties
\newcommand{\Bound}{\mathcal{S}} % upper bound for resonance
\newcommand{\MM}{\mathcal{M}}   % category of matroids
\newcommand{\oMM}{\overline{\MM}}   % projective subcategory
\newcommand{\Co}{\mathcal{C}}   % a set of sets
\renewcommand{\L}{\mathcal{L}}   % classes in a multinet
\newcommand{\X}{\mathcal{X}}   % base-points in a multinet
\DeclareMathOperator{\rank}{rank}      % rank of matroid/arrangement
\DeclareMathOperator{\spn}{span}      % rank of matroid/arrangement
\DeclareMathOperator{\chr}{char}      % characteristic
\DeclareMathOperator{\ShExt}{\mathscr{E}\kern -0.5pt xt}
\DeclareMathOperator{\ShHom}{\mathscr{H}\kern -0.5pt om}
\DeclareMathOperator{\id}{id} % identity function
\DeclareMathOperator{\im}{im} % image
\DeclareMathOperator{\irr}{irr} % irreducible
\DeclareMathOperator{\ess}{ess} % essential
\DeclareMathOperator{\pc}{\|}    % parallel connection
\newcommand{\oV}{\overline{V}}  % usual codimension-1 subspace of V
\newcommand{\oA}{\overline{A}}  % projective OS algebra
\newcommand{\A}{{\mathcal{A}}}
\begin{document}

\title*{Higher resonance varieties}
\titlerunning{Higher resonance varieties of matroids}

\author{Graham Denham}
\institute{%Graham Denham \at
Department of Mathematics,
University of Western Ontario, London, Ontario, Canada  N6A 5B7.
\email{gdenham@uwo.ca}\\
Partially supported by a grant from NSERC of Canada}

%\subjclass[2000]{Primary 05B35, Secondary 52C35, 32S22}

\maketitle
\setcounter{minitocdepth}{1}
\dominitoc

\abstract{
We present some new results about the resonance varieties of matroids
and hyperplane arrangements.  Though these have been the objects of
ongoing study, most work so far has focussed on cohomological
degree $1$.  We show that certain phenomena become apparent only by
considering all degrees at once.
}
\keywords{hyperplane arrangement, matroid, resonance variety}

\section{Introduction}
Resonance varieties are a cohomological invariant that first appeared
in the study of the cohomology of one-dimensional fundamental group
representations.  Though they were first considered for topological
spaces, they are algebraic in nature, and they may be defined for any
(differential) graded-commutative algebra~\cite{Su15,Su11}.

The resonance varieties associated with the complement of a complex
hyperplane arrangement are an interesting special case, and we mention 
in particular the surveys \cite{Fa10,Yu12} for their description of 
the history and motivation.  Here, the underlying topological
space is a complex, quasiprojective variety; however, it follows from the
Brieskorn-Orlik-Solomon Theorem~\cite{OS80} that the
resonance varieties depend only on combinatorial data coming from a
matroid.

With that in mind, it is tempting to ask for a formula or an efficient
algorithm that expresses the resonance varieties in terms of the matroid.
However, this seems to be a difficult problem.  In cohomological degree $1$,
Falk and Yuzvinsky~\cite{FY07} have given a characterization, building
on work of Libgober and Yuzvinsky~\cite{LY00} 
as well as Falk~\cite{Fa97}.  In degree greater than $1$, there has
been some progress (in particular \cite{Bu11,CDFV12}), but comparatively
little is known.  By way of contrast, Papadima and Suciu gave a 
closed formula for the resonance varieties of exterior Stanley-Reisner
rings (and right-angled Artin groups) in \cite{PS09}, building on work
of \cite{AAH99}.  Although the situation for matroids has some 
similarities, an analogous definitive result is so far out of reach.

Our main goal here is to develop some basic tools systematically.
We consider the  behaviour of resonance varieties of matroids and
arrangements under such constructions as weak maps, Gale duality, and 
the deletion/contraction constructions.  Combining these basic 
ingredients allows us to compute the resonance varieties of some 
graphic arrangements explicitly.  A broader range of phenomena appear
in moving from degree $1$ to $2$: as an example, we find a 
straightforward way to make hyperplane arrangements for which the
Milnor fibre $F$ has nontrivial monodromy eigenspaces in $H^2(F,\C)$.

A hyperplane arrangement is a matroid realization over $\C$.
Some results about resonance 
from the literature are known for all matroids, while others
depend on realizability.  Our approach is combinatorial, so matroids
(rather than arrangements) seem to be the appropriate objects for this
paper.  It was recently shown in \cite{PS14} that realizability 
imposes a non-trivial qualitative restriction on resonance varieties,
at least in positive characteristic.  This encourages us to keep
track of the role of realizability.

\subsection{Outline}
The paper is organized as follows.  We begin by recalling the definition
of the Orlik-Solomon algebra, viewed as a matroid invariant.  
We would like to make use of the naturality of the construction; however,
not all weak maps of matroids induce homomorphisms of Orlik-Solomon algebras.
In \S\ref{ss:cat} we impose a condition on weak maps to define a category 
$\MM$ of matroids on which the Orlik-Solomon construction is functorial.

In \S\ref{sec:res}, we define resonance varieties and review 
known results about some qualitative properties that distinguish the 
resonance varieties of Orlik-Solomon algebras from the general case.  For
example, for a matroid $\M$ of rank $\ell$, the resonance varieties 
are known to satisfy 
\[
\R^p(\M)\subseteq \R^{p+1}(\M)
\]
for $0\leq p<\ell$ \cite{EPY03}. 
At least in the realizable, characteristic-zero case, resonance varieties
are unions of linear subspaces.  In \S\ref{ss:bound}, we construct subspace
arrangements $\Bound^p(\M)$ that contain them, based on a result of 
Cohen, Dimca, and
Orlik~\cite{CDO03}.  We find that in some interesting cases this 
upper envelope is tight: i.e., $\R^p(\M)=\Bound^p(\M)$.

In \S\ref{sec:pieces}, we examine the effect of standard matroid
operations on resonance.  Some results are known, some folklore, and
others new.  We use these to compute some examples and find that
certain special properties of components of $\R^p(\M)$ for $p=1$ no
longer hold for $p\geq2$.

In the last section, we revisit the combinatorics of multinets
and singular subspaces in terms of maps of Orlik-Solomon algebras
as another attempt to characterize components of resonance varieties.
The results are inconclusive, although the last example strongly 
suggests that some interesting combinatorics remains to be uncovered.

\section{Background}
\subsection{Arrangements and matroids}\label{ss:arr and mat}
We refer to the books of Orlik and Terao~\cite{OTbook} and 
Oxley~\cite{Oxbook} for basic facts about hyperplane arrangements and 
matroids, respectively.  If $\M$ is a matroid on
the set $[n]:=\set{1,2,\ldots,n}$ and $\k$ is a field, let
$V=\k^n$, a vector space with a distinguished basis we will call 
$e_1,\ldots,e_n$.
The Orlik-Solomon algebra $A_{\k}(\M)$ is the quotient of an 
exterior algebra $E:=\Lambda(V)$ by an ideal $I=I(\M)$ generated by
homogeneous relations indexed by circuits in $\M$.  More explicitly, let
$\partial$ be the derivation on $E$ defined by $\partial(e_i)=1$ 
for all $1\leq i\leq n$.  Then $I$ is generated by 
\begin{equation}\label{eq:OSrelations}
\set{\partial(e_C)\colon \text{circuits $C\subseteq [n]$ of $\M$}},
\end{equation}
where $e_C:=\prod_{i\in C}e_i$, with indices taken in increasing order.
We will omit the $\M$ or $\k$ from the notation $A_\k(\M)$ where no
confusion arises.  If $i\in\M$ is a loop, then $C=\set{i}$ is a circuit
and $A(\M)=0$.

We regard 
hyperplane arrangements as linear representations of loop-free matroids.
For us, an arrangement $\A$ over a field $F$ is an ordered $n$-tuple of 
(nonzero) linear forms $(f_1,\ldots,f_n)$, where $f_i\in W^*$ for
$1\leq i\leq n$, and $W$ is a vector space over $F$.  We let $\M(\A)$
denote the matroid on $[n]$ whose dependent sets index the linear
dependencies of the $f_i$'s.
In particular, our arrangements are all central, and we explicitly allow
repeated hyperplanes.

If $\A$ is an arrangement, let 
$H_i=\ker(f_i)$, a hyperplane in $W$, for $1\leq i\leq n$.  
Let $M(\A)=W-\bigcup_{i=1}^n H_i$, and $U(\A)=\P W-\bigcup_{i=1}^n \P H_i$.
If $\A$ is an arrangement, we abbreviate $A_{\k}(\A):=A_{\k}(\M(\A))$.
If $F=\C$, the complement $M(\A)$ is a complex manifold, and 
the Brieskorn-Orlik-Solomon Theorem states that
$A_{\k}(\A)\cong H^*(M(\A),\k)$ as graded algebras.  

\subsection{Projectivization}
Suppose $A(\M)=E/I$ is the Orlik-Solomon algebra of a matroid $\M$.
Since $\partial^2=0$, it follows
$\partial(I)=0$, so $\partial$ induces a derivation on $A$ as well,
which we denote by $\partial_A$.  Let
\[
\oV=\ker(\partial|_{V})=
\Big\{v\in \k^n\colon \sum_{i=1}^nv_i=0\Big\},
\]
and let $\oA(\M)$ denote the subalgebra of $A$ generated by $\oV$.  
\begin{lemma}\label{lem:ker partial}
We have $\oA(\M)=\ker\partial_A$.
\end{lemma}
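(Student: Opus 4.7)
My strategy is to reduce the problem to a clean linear decomposition of $E = \Lambda V$ itself and then transfer the result through the quotient $E \twoheadrightarrow A = E/I$. One direction is immediate: $\partial_A$ is a graded derivation on a graded-commutative algebra, so its kernel contains $1$ and is closed under multiplication, making it a subalgebra of $A$. By construction this subalgebra contains $\oV$, hence it contains the whole subalgebra $\oA(\M)$ generated by $\oV$.

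For the reverse inclusion I would choose any basis vector, say $e_1$, and use the splitting $V = \oV \oplus \k e_1$, valid because $\partial(e_1) = 1 \neq 0$. This induces a vector-space decomposition
\[
E \;=\; \Lambda \oV \,\oplus\, e_1 \cdot \Lambda \oV.
\]
Since $\partial$ is a derivation that vanishes on $\oV$, it vanishes on all of $\Lambda \oV$, and for $a, b \in \Lambda \oV$ it sends $a + e_1 b$ to $b$. In particular, $\ker(\partial|_E) = \Lambda \oV$, which is exactly the subalgebra of $E$ generated by $\oV$.

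Now I would take $x + I \in \ker \partial_A$, i.e.\ $\partial(x) \in I$, and write $x = a + e_1 b$ with $a, b \in \Lambda \oV$. Then $b = \partial(x) \in I$, and since $I$ is an ideal $e_1 b \in I$ as well, so $x \equiv a \pmod{I}$; hence $x + I$ lies in the image of $\Lambda \oV$ in $A$, which is $\oA(\M)$. I do not anticipate a serious obstacle; the only step worth a moment's attention is the identification of $\oA(\M)$ with the image of $\Lambda \oV$ under $E \twoheadrightarrow A$, which is routine because forming the subalgebra generated by a set commutes with quotient maps of algebras.
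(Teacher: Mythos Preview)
Your argument is correct and takes a genuinely different route from the paper's. The paper invokes the exactness of the complex $(A,\partial_A)$ (cited from \cite{OTbook}) to write any $x\in\ker\partial_A$ as $x=\partial_A(y)$, and then checks on monomials that $\partial(e_I)=(e_{i_1}-e_{i_k})\cdots(e_{i_{k-1}}-e_{i_k})$ lies in the subalgebra generated by~$\oV$, so $\im\partial_A\subseteq\oA(\M)$. You bypass exactness entirely: the splitting $E=\Lambda\oV\oplus e_1\Lambda\oV$ lets you identify $\ker\partial$ already at the level of~$E$, and the passage to $A$ only uses that $I$ is an ideal. Your proof is more elementary and self-contained; the paper's approach, on the other hand, leans on the acyclicity of $(A,\partial_A)$, a fact the paper needs anyway for the short exact sequence~\eqref{eq:proj ses 1} immediately afterward, so there is no real economy lost. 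One small remark: your argument tacitly assumes $n\geq 1$ so that $e_1$ exists, but the case $n=0$ is trivial.
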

\begin{proof}
Clearly $\oA(\M)\subseteq\ker\partial_A$.  By 
\cite[Lemma~3.13]{OTbook}, the chain complex $(A,\partial)$ is exact,
so if $\partial_A(x)=0$, then $x=\partial_A(y)$ for some $y\in A$.  If
$e_I=e_{i_1}\cdots e_{i_k}\in E$, 
then $\partial(e_I)=(e_{i_1}-e_{i_k})(e_{i_2}-e_{i_k})\cdots 
(e_{i_{k-1}}-e_{i_k})$, which implies
$\partial(e_I)$ is in the subalgebra generated
by $\oV$.  By applying $\partial_A$ to a representative in $E$ for $y$, we
see $x\in \oA(\M)$.
\qed
\end{proof}
Together with the exactness of $(A,\partial_A)$, this gives a short exact
sequence of graded $\k$-modules,
\begin{equation}\label{eq:proj ses 1}
\begin{tikzpicture}[baseline=-3pt]
\node (a) at (0,0) {$0$};
\node (b) at (1.4,0) {$\oA(\M)$};
\node (c) at (3.0,0) {$A(\M)$};
\node (d) at (5.0,0) {$\oA(\M)[-1]$};
\node (e) at (6.7,0) {$0$.};
\draw[->] (a) to (b);
\draw[->] (b) to (c);
\draw[->] (c) to node[above] {$\scriptstyle \partial$} (d);
\draw[->] (d) to (e);
\end{tikzpicture}
\end{equation}

Of course, if $\M=\M(\A)$ where $\A$ is a complex arrangement, this
sequence has a well-known origin: the quotient map $M(\A)\to U(\A)$ makes
$M(\A)$ a split $\C^*$-bundle over $U(\A)$, so the induced algebra 
homomorphism $H^*(U(\A),\k)\to H^*(M(\A),\k)$ is injective.  In fact,
under the isomorphism $H^*(M(\A),\k)\cong A(\A)$, the Gysin map is 
identified with $\partial_A$, and $\oA(\A)\cong H^*(U(\A),\k)$: see 
\cite{CDFSSTY}
or \cite[\S6.1]{Dim92} for details.  With this in mind, we will call
$\oA(\M)$ the {\em projective Orlik-Solomon algebra} even if $\M$ does
not have a complex realization.
\subsection{A category of matroids}\label{ss:cat}
We would like to make use of maps of Orlik-Solomon algebras, so it will
be useful to have a functorial construction.  For this,
we recall the definition of a weak map of matroids from 
\cite[Ch.~9]{Wh86}. If $\M_1$ and $\M_2$ are matroids
on sets $S_1$ and $S_2$, respectively, we add a disjoint loop ``$0$'' to
$\M_i$ to make a matroid $\M_i^+$, for $i=1,2$.
\begin{definition}\label{def:weak}
A {\em weak map}
$f\colon \M_1\to \M_2$ is a map of sets $f\colon S_1\cup\set{0}\to
S_2\cup\set{0}$ with the following properties: $f(0)=0$, and
for all $I\subset S_1$, 
if $f|_I$ is injective and $f(I)$ is independent in $\M_2^+$, then 
$I$ is independent in $\M_1$.

We will say a weak map $f\colon \M_1\to\M_2$ is {\em non-degenerate} if
$f^{-1}(0)=\set{0}$ and {\em complete} if,
for every circuit $C$ of $\M_1$, we have $\abs{C\cap f^{-1}(0)}\neq 1$.
Clearly, non-degenerate weak maps are complete.
\end{definition}
\begin{proposition}\label{prop:composition}
If $f\colon \M_1\to\M_2$ and $g\colon \M_2\to\M_3$ are complete weak
maps, so is $g\circ f$.  If $f$ and $g$ are non-degenerate weak maps, 
so is $g\circ f$.
\end{proposition}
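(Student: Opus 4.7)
The statement has three components to verify: that $g\circ f$ is a weak map, that it inherits non-degeneracy, and that it inherits completeness. The first two are routine; the completeness assertion carries the substance. Non-degeneracy composes by a set-theoretic check: $(g\circ f)^{-1}(0) = f^{-1}(g^{-1}(0)) = f^{-1}(\set{0}) = \set{0}$. For the weak-map axiom, let $I \subseteq S_1$ with $(g\circ f)|_I$ injective and $(g\circ f)(I)$ independent in $\M_3^+$. The injectivity of the composition forces both $f|_I$ and $g|_{f(I)}$ to be injective, and the independence of $(g\circ f)(I)$ in $\M_3^+$ forces $0 \notin g(f(I))$, hence $f(I) \subseteq S_2$ since $g(0)=0$. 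I then apply $g$'s weak-map axiom to $f(I)$ to obtain $f(I)$ independent in $\M_2^+$, and $f$'s weak-map axiom to $I$ to conclude $I$ is independent in $\M_1$.

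For completeness, fix a circuit $C$ of $\M_1$ and decompose $C \cap (g\circ f)^{-1}(0) = A \sqcup E$, where $A = C \cap f^{-1}(0)$ and $E = \set{c \in C : f(c) \in g^{-1}(0)\setminus \set{0}}$. Completeness of $f$ gives $\abs{A} \neq 1$, so $\abs{A}+\abs{E} \neq 1$ unless $\abs{A} = 0$ and $\abs{E} = 1$. I reduce to this case and set $E = \set{e}$; thus $f(e) \in g^{-1}(0) \setminus \set{0}$ while $f(c) \in S_2 \setminus g^{-1}(0)$ for every $c \in C \setminus \set{e}$. Moreover, no $c \neq e$ in $C$ can satisfy $f(c) = f(e)$ (else $c$ would also lie in $E$), so $f$ is injective at $e$.

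The strategy is to produce a circuit $C'$ of $\M_2$ with $f(e) \in C' \subseteq f(C)$; then $\abs{C' \cap g^{-1}(0)} = 1$---at least, because $f(e)$ sits in both; at most, since no other element of $f(C)$ is in $g^{-1}(0)$---contradicting completeness of $g$ applied to $C'$. To construct $C'$, I invoke the contrapositive of $f$'s weak-map axiom on a maximal subset of $C$ on which $f$ is injective---by the observation above this subset may be taken to contain $e$---and conclude that $f(C)$ is dependent in $\M_2$; one would then extract $C'$ as the fundamental circuit of $f(e)$ relative to a suitable independent subset of $f(C)$. The main obstacle is that the weak-map structure alone does not preclude $f$ from introducing dependencies entirely within $f(C\setminus \set{e})$, in which case every circuit of $\M_2$ inside $f(C)$ could avoid $f(e)$; ruling this out, and so confirming that $f(e)$ lies in some circuit of the restriction $\M_2|_{f(C)}$, is the delicate step and is where most of the effort would go.
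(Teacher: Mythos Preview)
Your caution at the final step is well-placed; unfortunately, the obstacle you identify is genuine and cannot be overcome. The claim that $f(e)$ must lie in some circuit $C' \subseteq f(C)$ of $\M_2$ is false in general, and in fact the proposition itself fails as stated. Here is a counterexample. Take $\M_1 = \U_{2,3}$ on $\set{1,2,3}$, and let $\M_2$ be the matroid on $\set{a,b,c}$ in which $\set{a,b}$ is a circuit and $c$ is a coloop. The bijection $f$ sending $1,2,3$ to $a,b,c$ respectively is a non-degenerate (hence complete) weak map. Let $\M_3$ be the free matroid on a single element $d$, and define $g$ by $g(a)=g(b)=d$ and $g(c)=0$. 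One checks directly that $g$ is a weak map; it is complete because the only circuit of $\M_2$ is $\set{a,b}$, which is disjoint from $g^{-1}(0)\cap\set{a,b,c}=\set{c}$. But $g\circ f$ sends $1,2\mapsto d$ and $3\mapsto 0$, so the circuit $\set{1,2,3}$ of $\M_1$ meets $(g\circ f)^{-1}(0)$ in exactly one element. Thus $g\circ f$ is not complete.

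The paper's own proof simply asserts the existence of the circuit $C'$ containing $f(i)$ without justification, so it has the same gap; your proposal is more honest in isolating the difficulty. The mechanism is exactly what you anticipated: a weak map can create new dependencies inside $f(C\setminus\set{e})$ while leaving $f(e)$ a coloop of the restriction $\M_2|_{f(C)}$, and the completeness hypothesis on $g$ imposes no constraint on elements that lie in no circuit of $\M_2$. Your treatment of the weak-map axiom and of non-degeneracy is correct, and one can salvage a partial statement: if $f$ is merely complete but $g$ is non-degenerate, then $(g\circ f)^{-1}(0)=f^{-1}(0)$, so $g\circ f$ inherits completeness directly from $f$.
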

\begin{proof}
Weak maps are closed under composition, so the second assertion is
trivial, and it is enough to check that the composite of complete maps
is complete.  Suppose instead that there is a circuit $C$ in $\M_1$ and
a unique element $i\in C$ for which $g\circ f (i)=0$.  Since $f$ is 
complete, $f(i)\neq0$.  Then $f(i)$ is contained in a circuit 
$C'\subseteq f(C)$ of $\M_2$.  By assumption, $j=f(i)$ is the only element
of $C'$ which $g(j)=0$.  But $g$ is complete, a contradiction.
\qed
\end{proof}

In view of the previous result, matroids on finite sets form a category
with morphisms taken to be the complete weak maps, which we will denote
by $\MM$.  Let $\oMM$ denote the (wide) subcategory whose morphisms are
non-degenerate weak maps.
Our hypotheses (complete and non-degenerate) are designed
to make the Orlik-Solomon algebra and its projective version functorial,
respectively.

First, we note a categorical formality.
If $f\colon\M_1\to\M_2$ is a complete weak map and $f(S_1)\supseteq S_2$, then
it is easy to see $f$ is an epimorphism.  While we avoid trying to 
characterize epimorphisms, here is a convenient necessary condition.
\begin{lemma}\label{lem:epic}
If $f\colon \M_1\to\M_2$ is an epimorphism in $\MM$, then any element
$i\in S_2-f(S_1)$ is either a loop or it is parallel to an element of 
$f(S_1)$.
\end{lemma}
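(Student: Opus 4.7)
The plan is to prove the contrapositive: if some $i \in S_2 - f(S_1)$ is neither a loop nor parallel to an element of $f(S_1)$, then $f$ is not epic. Concretely, I would exhibit two distinct morphisms $g_1, g_2 \colon \M_2 \to \M_3$ in $\MM$ that agree after precomposition with $f$, contradicting the epimorphism hypothesis.

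For the target matroid, I would take $\M_3$ to be the parallel extension of $\M_2$ at $i$, obtained by adjoining a new element $i^*$ declared parallel to $i$. This is a well-defined matroid precisely because $i$ is not a loop. Let $g_1 \colon \M_2 \to \M_3$ be the inclusion, and define $g_2 \colon \M_2 \to \M_3$ by $g_2(i) = i^*$ and $g_2(k) = k$ for $k \neq i$. Both maps are non-degenerate on $S_2 \cup \{0\}$, hence complete.

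The only step requiring any care is verifying that $g_2$ is a weak map. The cleanest route is to exploit the involution $\sigma$ of $\M_3$ that swaps $i \leftrightarrow i^*$ and fixes everything else, which is a matroid automorphism because $\{i,i^*\}$ is a parallel pair. For any $I \subseteq S_2$, the set $g_2(I)$ is obtained from $I$ by (possibly) exchanging $i$ for $i^*$, so $\sigma(g_2(I)) = I$; since $\sigma$ preserves dependence and $\M_3$ restricts on $S_2$ to $\M_2$, it follows that $g_2(I)$ is dependent in $\M_3$ if and only if $I$ is dependent in $\M_2$. Because $g_2$ is injective on $S_2$, this gives the weak map condition.

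To finish: since $i \notin f(S_1)$, every $f(k)$ is fixed by both $g_1$ and $g_2$, so $g_1 \circ f = g_2 \circ f$; on the other hand $g_1(i) = i \neq i^* = g_2(i)$, so $g_1 \neq g_2$, contradicting that $f$ is epic. The main obstacle is really just choosing the right target $\M_3$: once the parallel-extension construction is in hand, the verifications are routine manipulations of the circuit structure. (As a remark, the argument only uses that $i$ is not a loop, so it actually produces the stronger conclusion that every $i \in S_2 - f(S_1)$ must be a loop of $\M_2$.)
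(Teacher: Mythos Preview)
Your proof is correct and takes a genuinely different route from the paper's.  The paper chooses the fixed target $\U_{1,1}$ and defines $g,h\colon\M_2\to\U_{1,1}$ by sending every element to the adjoined loop $0$, except that $h(i)=1$; then $g\circ f=h\circ f$ while $g\neq h$.  Your choice of target---the parallel extension of $\M_2$ at $i$---lets you use non-degenerate maps, so completeness is automatic, whereas in the paper's argument one must verify completeness for maps with large preimage of $0$.  This is exactly what yields your stronger conclusion: the paper's $h$ is complete only when $i$ is parallel to \emph{no} element of $S_2$ (not merely none in $f(S_1)$), and its $g$ is complete only when $\M_2$ is loop-free, so the paper's argument really consumes the full hypothesis on $i$ (and tacitly a bit more).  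Your construction needs only that $i$ is not a loop, which confirms your final remark that in fact every element of $S_2-f(S_1)$ must be a loop when $f$ is epic.  The paper's approach has the minor virtue of a fixed two-element target, but yours is both cleaner to verify and strictly sharper.
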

\begin{proof}
Suppose instead that some $i\in S_2-f(S_1)$ is neither a loop nor 
parallel to an element of $f(S_1)$.
Define two maps $g,h\colon \M_2\to \U_{1,1}$: let 
$g(j)=0$ for all $j\in S_2$ and $h(j)=0$ for all $j\neq i$, but $h(i)=1$.
Then $g$ and $h$ are complete weak maps, but $g\circ f=h\circ f$,
so $f$ is not an epimorphism.
\qed
\end{proof}
By the obvious 
action on the distinguished basis, a weak map $f\colon \M_1\to \M_2$
induces a linear map $V(\M_1)\to V(\M_2)$ which we will also denote by $f$.
\begin{lemma}\label{lem:weak map}  % to do: validate references
Suppose $f\colon \M_1\to\M_2$ is a complete weak map of matroids.
Then $\Lambda(f)$ induces a well-defined map of 
$\k$-algebras, $A(\M_1)\to A(\M_2)$.  If $f$ is also non-degenerate, then
it restricts to a map of projective Orlik-Solomon algebras.
\end{lemma}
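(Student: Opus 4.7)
My plan is to verify both claims by reducing each to a check on generators. For the first, since $I(\M_1)$ is generated by the elements $\partial(e_C)$ as $C$ runs over the circuits of $\M_1$, it suffices to show $\Lambda(f)(\partial(e_C))\in I(\M_2)$ for each such $C$. I would proceed by case analysis on the behaviour of $f|_C$.

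If $f|_C$ is injective and $0\notin f(C)$, then a direct expansion of $\partial(e_C)=\sum_j(-1)^{j-1}e_{C\setminus i_j}$ gives $\Lambda(f)(\partial(e_C))=\pm\partial(e_{f(C)})$. By the contrapositive of the weak-map axiom, $f(C)$ is dependent in $\M_2^+$, and since $0\notin f(C)$ it is dependent in $\M_2$, so it contains some circuit $C'$. The identity $e_{C'}=\pm\partial(e_{C'})\cdot e_{i_0}$ for any $i_0\in C'$ puts $e_{C'}$ in $I(\M_2)$; multiplying by $e_{D\setminus C'}$ shows $e_D\in I(\M_2)$ for every dependent $D$, and because $I(\M_2)$ is $\partial$-stable we conclude $\partial(e_{f(C)})\in I(\M_2)$. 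The remaining injective sub-case, where a unique element of $C$ maps to $0$, is precisely what the completeness condition $|C\cap f^{-1}(0)|\neq 1$ forbids.

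If instead $f|_C$ is not injective, I would distinguish further. If two distinct elements of $C$ both map to $0$, or if there are two or more coincidence pairs among the nonzero images, then each term $\Lambda(f)(e_{C\setminus i_j})$ carries a repeated exterior factor (or a factor $0$) and hence vanishes in $\Lambda V(\M_2)$. The delicate sub-case is a single collision $f(i_a)=f(i_b)=m\neq 0$ with $a<b$: only the $j=a$ and $j=b$ summands of $\Lambda(f)(\partial(e_C))$ survive, and they agree with a single exterior monomial up to a sign $(-1)^{b-a-1}$ coming from shuffling the surviving copy of $e_m$ into place; combined with the external $(-1)^{j-1}$ factors from $\partial$, the two contributions cancel. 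I expect this sign bookkeeping to be the main technical obstacle. Together the cases yield $\Lambda(f)(I(\M_1))\subseteq I(\M_2)$, so $\Lambda(f)$ descends to an algebra map $A(\M_1)\to A(\M_2)$.

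For the projective statement I would use the definition of $\oA(\M)$ as the subalgebra generated by $\oV(\M)$ and check only the generators. If $f$ is non-degenerate, then $f(i)\neq 0$ for every $i$, so $\Lambda(f)(e_i)=e_{f(i)}$; for $v=\sum_i v_i e_i\in\oV(\M_1)$ with $\sum_i v_i=0$, the image $\Lambda(f)(v)=\sum_i v_i e_{f(i)}$ has coordinate sum $\sum_i v_i=0$ as well, placing it in $\oV(\M_2)$. Consequently the algebra map from the first part restricts to $\oA(\M_1)\to\oA(\M_2)$, as claimed.
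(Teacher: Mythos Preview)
Your argument is correct and takes the same route as the paper: reduce to the generators $\partial(e_C)$ of $I(\M_1)$ and split into cases according to the behaviour of $f|_C$. The paper's case split is coarser (just whether $0\in f(C)$ or not), and your single-collision sign computation can be bypassed by observing that when $0\notin f(C)$ the map $\Lambda(f)$ commutes with $\partial$ on the subalgebra generated by $\{e_i:i\in C\}$, so that $\Lambda(f)(\partial e_C)=\partial\big(\Lambda(f)(e_C)\big)$, which lies in $I(\M_2)$ whether $f|_C$ is injective (then $\Lambda(f)(e_C)=\pm e_{f(C)}$ with $f(C)$ dependent) or not (then $\Lambda(f)(e_C)=0$).
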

\begin{proof}
Write $A(\M_i)=E_i/I_i$ for $i=1,2$, and consider the homomorphism
$\Lambda(f)\colon E_1\to E_2$.  For any circuit $C$ in $\M_1$, we 
consider two cases.  If $0\not\in f(C)$, then $f(C)\cap S_2$ is
dependent, so the image of $\Lambda(f)(\partial(e_C))\in I_2$.  If
$0\in f(C)$, by completeness, $f(e_i)=f(e_j)$ for some distinct
$i,j\in C$, so $\Lambda(f)(\partial(e_C))=0$.  So $\Lambda(f)(I_1)\subseteq
I_2$, as required.

If, moreover, $f$ is non-degenerate, we see $\partial|_{V(\M_2)}\circ f=  
f\circ\partial|_{V(\M_1)}$, by evaluating on the distinguished basis.
Then $\im(f|_{\oV(\M_1)})\subseteq \oV(\M_2)$.  Since $\oA(\M_1)$ is 
generated in degree $1$, the image of $\Lambda(f)|_{\oA(\M_1)}$ lies
in $\oA(\M_2)$.
\qed
\end{proof}
If $f$ is a complete weak map, put 
$A(f)=\Lambda(f)\colon A(\M_1)\to A(\M_2)$.  If $f$ is also non-degenerate,
let $\oA(f)\colon \oA(\M_1)\to\oA(\M_2)$ denote the restriction.
\begin{theorem}\label{thm:functorial}
$A$ and $\oA$ are functors from $\MM$ and $\oMM$, respectively, 
to the category of graded-commutative
$\k$-algebras.  Moreover, $A$ preserves epimorphisms.
\end{theorem}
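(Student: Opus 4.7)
The plan is to handle the two assertions in sequence: first verify the standard functor axioms, then deduce the epimorphism statement from Lemma~\ref{lem:epic} using the OS relations of parallel elements.

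For functoriality, most of the work is already done: Lemma~\ref{lem:weak map} shows that $A(f)$ and $\oA(f)$ are well-defined $\k$-algebra maps whenever $f$ is complete, respectively non-degenerate, and Proposition~\ref{prop:composition} shows that the classes of complete and non-degenerate weak maps are closed under composition (so $\MM$ and $\oMM$ are genuine categories). What remains is identity and composition preservation. But $A(f)$ is by definition the map $\Lambda(f)$ descended modulo the Orlik-Solomon ideals, and $\Lambda$ is itself a functor on (based) $\k$-vector spaces, so $\Lambda(\id)=\id$ and $\Lambda(g\circ f)=\Lambda(g)\circ\Lambda(f)$ on the exterior algebras; both equalities pass to the quotient $A(\M)$. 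The same identities then restrict to the subalgebras $\oA(\M)\subseteq A(\M)$, giving functoriality of $\oA$ on $\oMM$.

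For the epimorphism claim, the idea is to show that if $f\colon\M_1\to\M_2$ is an epimorphism in $\MM$, then every degree-one generator $e_i$ of $A(\M_2)$ lies in the image of $A(f)$; since $A(\M_2)$ is generated by $V(\M_2)$, this forces $A(f)$ to be surjective. Apply Lemma~\ref{lem:epic}: any $i\in S_2-f(S_1)$ is either a loop or parallel to some $j\in f(S_1)$. If $\M_2$ has a loop then $A(\M_2)=0$ and there is nothing to prove. Otherwise, whenever $i\in S_2-f(S_1)$ is parallel to $j\in f(S_1)$, the pair $\{i,j\}$ is a circuit, so $\partial(e_ie_j)=e_j-e_i\in I(\M_2)$, whence $e_i=e_j$ in $A(\M_2)$ and $e_i\in\im A(f)$. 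Elements of $f(S_1)$ are tautologically in the image, so all of $V(\M_2)$ lies in $\im A(f)$.

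I do not expect a real obstacle here: the functor axioms are formal, and the combinatorial content of the epimorphism statement has been isolated in Lemma~\ref{lem:epic}. The only mildly delicate point is remembering to treat the loop case separately, since Lemma~\ref{lem:epic} permits loops in $S_2-f(S_1)$ but the parallel-class argument only works when $\M_2$ is loop-free; fortunately loops collapse $A(\M_2)$ to zero and so render the conclusion trivial.
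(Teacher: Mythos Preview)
Your proof is correct and follows essentially the same approach as the paper: both arguments reduce functoriality to the degree-$1$ action and both deduce surjectivity of $A(f)$ from Lemma~\ref{lem:epic} together with the relation $e_i=e_j$ for parallel elements. Your version is marginally cleaner in that you argue directly with the degree-$1$ generators (using that $A(\M_2)$ is generated in degree~$1$), whereas the paper checks surjectivity on the spanning set of monomials $e_I$ for independent $I$; the underlying idea is the same.
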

\begin{proof}
If $f$ is a morphism, $A(f)$ is determined by its action in degree $1$,
where $A$ obviously preserves composition, so $A$ is functorial.
Now suppose $f$ is an epimorphism.  
The algebra $A(\M_2)$ is spanned by monomials $e_I$, where $I$ is independent.  

If $I\subseteq\im(f)$, there exists a subset
$J\subseteq f^{-1}(I)$ with $\abs{J}=\abs{I}$.  If not, by Lemma~\ref{lem:epic},
we may replace some elements of $I$ with parallel elements to form a 
set $I'\subseteq \im(f)$ and find $J$ as above with $f(J)=I'$.  

Since $e_i=e_j$ for parallel elements in $A(\M_2)$, in both cases we have
$A(f)(e_J)=e_I$.  So $A(f)$ is a surjective ring homomorphism.
\qed
\end{proof}
\begin{exm}\label{ex:weak}
The map $f\colon \U_{2,3}\to \U_{2,2}$ given by $f(i)=i$ for $i=1,2$ and
$f(3)=0$ is a weak map.  Taking $C=[3]$, we see $f$ fails to be 
complete, and $\Lambda(f)$ fails to give a map of Orlik-Solomon 
algebras.

On the other hand, if $\M$ is the matroid of the graph
\begin{equation}\label{eq:pyramid}
G=\begin{tikzpicture}[baseline=(current bounding box.center),scale=0.6,
bend angle=25,vertex/.style={circle,draw,inner sep=1.3pt,fill=black}]

\node[vertex] (a) at (0,0) {};
\node[vertex] (b) at (2,0) {};
\node[vertex] (c) at (2,2) {};
\node[vertex] (d) at (0,2) {};
\node[vertex] (e) at (1,1) {};

\draw (a) to node[below] {1} (b);
\draw (b) to node[right] {2} (c);
\draw (c) to node[above] {3} (d);
\draw (d) to node[left] {4} (a);
\draw (a) to node[right] {5} (e);
\draw (b) to node[above] {6} (e);
\draw (c) to node[left] {7} (e);
\draw (d) to node[below] {8} (e);
\end{tikzpicture},
\qquad \M(G)\to \U_{3,4}:
\begin{tikzpicture}[baseline=(current bounding box.center),scale=0.6,
bend angle=25,vertex/.style={circle,draw,inner sep=1.3pt,fill=black}]

\node[vertex] (a) at (0,0) {};
\node[vertex] (b) at (2,0) {};
\node[vertex] (c) at (2,2) {};
\node[vertex] (d) at (0,2) {};
\node[vertex] (e) at (1,1) {};

\draw (a) to node[below] {1} (b);
\draw (b) to node[right] {2} (c);
\draw (c) to node[above] {3} (d);
\draw (d) to node[left] {4} (a);
\draw (a) to node[right] {4} (e);
\draw (b) to node[above] {1} (e);
\draw (c) to node[left] {2} (e);
\draw (d) to node[below] {3} (e);
\end{tikzpicture}
\end{equation}
then the map from the edges of $G$ to the set $[4]$ given by the edge labels
on the right
defines a non-degenerate weak map $\M(G)\to \U_{3,4}$, because $G$ contains no
three-cycles with distinct edge labels.  
\end{exm}

We will end this section with two easy but useful observations.
Suppose $\M$ is a matroid on $[n]$ and $\pi$ is a partition of $[n]$ into
$k$ parts.  Let $p_\pi\colon [n]\to[k]$ be the map that sends $i$ to $s$
whenever $i\in \pi_s$.
\begin{proposition}\label{prop:partition}
$p_\pi\colon \M\to \U_{2,k}$ is a morphism of $\oMM$
if and only if, whenever $i$ and $j$ are parallel in 
$\M$, $i$ and $j$ are in the same block of $\pi$.
\end{proposition}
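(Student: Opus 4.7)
The plan is to exploit the very simple matroid structure of $\U_{2,k}$: its circuits are exactly the three-element subsets of $[k]$, so the independent subsets of $\U_{2,k}^+$ are precisely the subsets of $[k]$ of cardinality at most $2$. Unwound with this in mind, the weak-map axiom should reduce to a condition purely about pairs.

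First, non-degeneracy is automatic: by construction $p_\pi(0)=0$ and $p_\pi([n])\subseteq[k]$, so $p_\pi^{-1}(0)=\set{0}$. All the real content sits in verifying the weak-map condition, and for this I would take its contrapositive form: $p_\pi$ is a weak map iff, for every $I\subseteq[n]$ that is dependent in $\M$ and on which $p_\pi$ is injective, $p_\pi(I)$ is dependent in $\U_{2,k}^+$. Because $p_\pi(I)\subseteq[k]$ never contains the loop $0$, and $\abs{p_\pi(I)}=\abs{I}$ by injectivity, the image is dependent in $\U_{2,k}^+$ exactly when $\abs{I}\geq 3$. So the axiom holds automatically for every $I$ of size at least $3$, and the real content lies on dependent subsets of size at most $2$---i.e., on parallel pairs of $\M$ (the singleton case is vacuous under the standing loop-free convention).

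For the forward direction, if $i,j$ are parallel in $\M$ but lie in distinct blocks of $\pi$, then $I=\set{i,j}$ is a $2$-circuit, $p_\pi|_I$ is injective, and $p_\pi(I)$ is a $2$-subset of $[k]$, hence independent in $\U_{2,k}$---a direct violation of the weak-map axiom. For the converse, assuming parallel pairs always lie in the same block of $\pi$, any two-element subset of $[n]$ on which $p_\pi$ is injective spans two distinct blocks and therefore cannot be a parallel pair, hence is independent in $\M$; this verifies the weak-map condition and completes the equivalence.

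I do not expect a serious obstacle: the claim is essentially a direct translation of the weak-map axiom through the transparent matroid structure of $\U_{2,k}$, the only book-keeping being the trivial loop $0$ in $\U_{2,k}^+$ and the standing assumption that $\M$ carries no loops.
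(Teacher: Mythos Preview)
Your argument is correct. The paper actually states Proposition~\ref{prop:partition} without proof, calling it an ``easy but useful observation,'' so there is nothing to compare against; your direct unwinding of the weak-map axiom via the independent sets of $\U_{2,k}$ is exactly the intended verification. The only point worth flagging is your reliance on a loop-free convention for $\M$: if $\M$ had a loop $i$, then $\set{i}$ would be dependent while $p_\pi(\set{i})$ is independent in $\U_{2,k}$, so $p_\pi$ would fail to be a weak map regardless of the parallel-pair condition, and the equivalence would break. You note this explicitly, and the paper's context (Orlik--Solomon algebras vanish in the presence of loops) makes the assumption harmless, but it is a genuine hypothesis rather than a triviality.
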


\begin{definition}\label{def:simple}
If $\M$ is a matroid on $[n]$, we define an equivalence relation on $[n]$
by letting $i\sim j$ if and only if $\set{i,j}$ is dependent.  
The {\em simplification} of $\M$, denoted $\M_s$ is, by definition,
the induced matroid on the equivalence classes.  The
natural map $s\colon \M\to\M_s$ is a morphism of $\oMM$.
\end{definition}

The map $A(s)\colon A(\M)\to A(\M_s)$ is easily seen to be an 
isomorphism, where $I(\M)$ contains relations $e_i-e_j$ if $\set{i,j}$ is
dependent in $\M$.
\begin{rmk}\label{rem:simple}
Clearly the complement of a hyperplane arrangement is unaffected by the 
presence of
repeated hyperplanes, so for topological purposes there is no loss in assuming
that the underlying matroid is simple.  However, we will make some use of 
the fact that the isomorphism $A(s)$ is not an equality
in Theorem~\ref{thm:res_basics}.
\end{rmk}

\section{Resonance varieties}\label{sec:res}
Now suppose $E=\Lambda(V)$ is the exterior algebra over a (finite-dimensional)
$\k$-vector space $V$.  Suppose $A$ and $B$ are graded $E$-modules.
\subsection{Definitions}
For any $v\in V$, we have $v\cdot v=0$, so
there is a cochain complex of $\k$-modules, 
$(A,\cdot v)$, in which the differential is by right-multiplication by $v$.
This construction is natural, in the sense that if $f\colon A\to B$ is a 
graded $E$-module homomorphism, then for any $v\in V$, 
\begin{equation}\label{eq:natural}
f\colon (A,\cdot v)\to (B,\cdot f(v))
\end{equation}
is clearly a homomorphism of cochain complexes.

The resonance varieties of $A$ are defined for all integers $p,d\geq0$ to be
\[
\R^p_d(A)=\set{v\in V\colon \dim_\k H^p(A,\cdot v)\geq d},
\]
and we abbreviate $\R^p(A):=\R^p_1(A)$.  We note that our definition varies
slightly from the usual one (see, e.g., \cite{PS10}) in that we do not
assume either that $A$ itself is a $\k$-algebra or that $V=A^1$.  
The modules of greatest interest are, in fact, algebras $A=A(\M)=E/I$; 
however, we do allow $I$ to contain relations
of degree $1$, accommodating parallel elements in $\M$.
We suggest distinguishing the two parameters by referring to
$\R^p_d(A)$ with $p>1$ as ``higher'' resonance, versus ``deeper'' for $d>1$.  
Our focus here is on the former.

For any nonzero $v\in V$, clearly $v\in \R^p_d(A)$ if and only if 
$\lambda v\in\R^p_d(A)$ for any $\lambda\in\k^\times$, so they determine
projective subvarieties of $\P V$.

\subsection{Resonance of Orlik-Solomon algebras}
From now on, we restrict our attention to Orlik-Solomon algebras, 
and abbreviate: $\R^p(\M):= \R^p(A(\M))$ and $\R^p(\A):=\R^p(\M(\A))$ for
matroids $\M$ and arrangements $\A$, respectively.  If $G$ is a graph,
let $\M(G)$ denote its matroid, and $\R^p(G):=\R^p(\M(G))$.

First we mention some properties of resonance varieties that 
specific to Orlik-Solomon algebras.
One such feature is a nestedness property discovered by
Eisenbud, Popescu and Yuzvinsky~\cite{EPY03} and studied 
further in \cite{Bu11,DSY14}. 

\begin{theorem}\label{thm:propagation}
Let $\M$ be a matroid of rank $\ell$.  Then
\begin{equation}\label{eq:propagation}
\set{0}\subseteq \R^0(\M)\subseteq \R^1(\M)\subseteq\cdots\subseteq \R^\ell(\M)
\subseteq \overline{V}.
\end{equation}
\end{theorem}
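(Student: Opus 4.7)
The inclusion chain splits naturally into two parts: the outer bound $\R^\ell(\M)\subseteq\overline{V}$, which is really a statement for every $p$, and the propagation inclusions $\R^p(\M)\subseteq\R^{p+1}(\M)$ for $0\le p<\ell$.

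For the outer bound, the plan is to prove the stronger fact that whenever $\sigma(v):=\sum_{i=1}^n v_i$ is nonzero, the Aomoto complex $(A(\M),\mu_v)$ (with $\mu_v$ denoting right multiplication by $v$) is acyclic in every degree. The tool is the derivation $\partial_A$ of Lemma~\ref{lem:ker partial}; applying its Leibniz rule to $av$ yields the Cartan-type identity
\[
\partial_A\mu_v(a)-\mu_v\partial_A(a)=(-1)^p\sigma(v)\,a \qquad (a\in A^p).
\]
When $\sigma(v)\ne 0$, setting $h=(-1)^{p-1}\sigma(v)^{-1}\partial_A$ on $A^p$ provides a contracting cochain homotopy $\mu_v h+h\mu_v=\id$, so $H^p(A,\mu_v)=0$ for every $p$ and hence $v\notin\R^p(\M)$. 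Thus $\R^p(\M)\subseteq\overline V$ for every $p$, in particular for $p=\ell$.

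For the propagation step, the point $v=0$ is handled immediately: the differential is zero, so $H^p(A,0)=A^p$, which is nonzero for every $0\le p\le\ell$ (if $\M$ has a loop then $A(\M)=0$ and the statement is vacuous). For nonzero $v\in\R^p(\M)$ the previous paragraph lets me assume $v\in\overline V$. In that case $\mu_v$ preserves the subalgebra $\oA=\ker\partial_A$, since $\partial_A(av)=\partial_A(a)v+(-1)^{|a|}\sigma(v)\,a=0$ whenever $a\in\oA$ and $\sigma(v)=0$, so the short exact sequence~\eqref{eq:proj ses 1} becomes an honest short exact sequence of cochain complexes with differential $\mu_v$. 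Its long exact sequence in cohomology, whose connecting map is induced by $\partial_A$, ties the groups $H^\bullet(\oA,\mu_v)$ and $H^\bullet(A,\mu_v)$ together.

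The main obstacle is to extract propagation from this long exact sequence, which is not formal. The plan is to combine it with the specific combinatorics of Orlik-Solomon relations, in the style of \cite{EPY03}: given a nontrivial class $0\ne[a]\in H^p(A,\mu_v)$, examine the linear map $V\to A^{p+1}/\mu_v(A^p)$ sending $x$ to $[xa]$, noting that each $xa$ is automatically a $\mu_v$-cocycle because $(xa)v=x(av)=0$. If every such class $[xa]$ vanished then one would have $V\cdot a\subseteq \mu_v(A^p)$, and tracking this condition through a no-broken-circuit expansion of $a$ against the defining relations $\partial e_C$ ought to force $a$ itself to be a $\mu_v$-coboundary, contradicting $[a]\ne 0$. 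Making this final step precise, which ultimately reflects the Koszul property of $A(\M)$, is the hardest part of the argument.
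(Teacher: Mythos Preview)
The paper's own ``proof'' is purely a citation: \cite{Yuz95} for the containment $\R^p(\M)\subseteq\overline V$ and \cite{EPY03} for the chain $\R^p(\M)\subseteq\R^{p+1}(\M)$.  Your treatment of the outer bound is complete and correct --- the Cartan identity $\partial_A\mu_v-\mu_v\partial_A=(-1)^p\sigma(v)\,\id$ and the resulting contracting homotopy are exactly Yuzvinsky's argument.

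For propagation there is a genuine gap, which you yourself flag.  You reduce the question to the implication
\[
av=0,\qquad V\!\cdot a\subseteq vA^p\quad\Longrightarrow\quad a\in vA^{p-1},
\]
i.e.\ to showing that the $E$-module $H^*(A,\mu_v)$ has no socle below degree~$\ell$.  That reformulation is fine, but the proposed method of ``tracking through a no-broken-circuit expansion of $a$ against the relations $\partial e_C$'' is not carried out, and I do not see how to make such a direct combinatorial argument work without essentially reproving the main theorem of \cite{EPY03}.  Their route is quite different: they show that the Orlik--Solomon ideal has a \emph{linear} free resolution over $E$, by passing to a monomial initial ideal and establishing linearity for arbitrary monomial ideals in an exterior algebra; propagation then drops out of general homological facts about modules with linear resolution.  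The step you leave open is comparable in strength to that linearity theorem, so it is the whole content of the cited result rather than a loose end.  (The short exact sequence~\eqref{eq:proj ses 1} that you set up is not actually used in your argument for propagation; by itself its long exact sequence only interleaves $H^*(\oA,\mu_v)$ and $H^*(A,\mu_v)$ and does not force the inclusion you need.)
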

\begin{proof}
The inclusions $\R^p(\M)\subseteq \R^{p+1}(\M)$ for $0\leq p\leq \ell-1$
were proven in \cite[Thm.~4.1(b)]{EPY03}.  The authors work with arrangements,
but their arguments apply to all matroids.  The inclusion 
$\R^p(\M)\subseteq\oV$ for all $p\geq0$ 
is due to Yuzvinsky, \cite[Prop.~2.1]{Yuz95}.
\qed
\end{proof}

Another is the following.  By contrast, this result depends on complex
geometry and a result due to Arapura~\cite{Ar97}: for a full explanation,
we refer to \cite{DPS09}.
\begin{theorem}\label{thm:linear}
Let $\A$ be a complex hyperplane arrangement, and $\k$ a field of characteristic
zero.
Then $\R^p_\k(\A)$ is a union of linear components, for $0\leq p\leq \rank(\A)$.
\end{theorem}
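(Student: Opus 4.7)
The plan is to relate $\R^p_\k(\A)$ to the characteristic variety of $M(\A)$ via a tangent-cone comparison, and then to apply Arapura's theorem on the structure of characteristic varieties of quasiprojective manifolds. First I would reduce to $\k=\C$: the stratification of $V$ by the ranks of the differentials $\cdot v$ is cut out by the vanishing of minors of a matrix of linear forms with integer coefficients, so being a union of linear subspaces passes freely between any two fields of characteristic zero.

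Next, I would set up the degree-$p$ characteristic variety
\[
\mathcal{V}^p=\set{\rho\in T\colon \dim_\C H^p(M(\A),L_\rho)\geq 1}
\]
inside the character torus $T=\Hom(\pi_1(M(\A)),\C^*)\cong (\C^*)^n$, where $L_\rho$ is the rank-one local system associated to $\rho$. The tangent space to $T$ at the identity $1\in T$ is naturally $\C^n\supseteq \oV$, and the heart of the argument is the identification
\[
\R^p_\C(\A)=\mathrm{TC}_1(\mathcal{V}^p),
\]
the tangent cone at $1$. This is the tangent cone theorem of Dimca-Papadima-Suciu from \cite{DPS09}; its hypothesis is formality of $M(\A)$, which holds because the Orlik-Solomon algebra furnishes a rational formal model for $M(\A)$ by the Brieskorn-Orlik-Solomon theorem.

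With this identification in hand, Arapura's theorem delivers the conclusion: every irreducible component of $\mathcal{V}^p$ has the form $\eta\cdot S$, where $S\subseteq T$ is an algebraic subtorus and $\eta\in T$ is a torsion point. A translate $\eta\cdot S$ passes through the identity precisely when $\eta\in S$, and in that case its tangent cone at $1$ coincides with the linear subspace $T_1S\subseteq \C^n$. Taking the union over components through $1$ exhibits $\R^p_\C(\A)$ as a union of linear subspaces, and base change completes the proof for arbitrary characteristic-zero $\k$.

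The main obstacle is the tangent cone theorem itself: it is a deep result that genuinely uses both formality and characteristic zero, and the subtlety lies in verifying that the analytic tangent cone of $\mathcal{V}^p$ at $1$ agrees with the algebraically defined $\R^p_\C$. Neither hypothesis can be dropped, as the examples of \cite{PS14} show. Granted that theorem, the dimension-counting in $T$ and the linearity of tangent spaces to subtori are routine.
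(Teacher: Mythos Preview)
The paper does not supply its own proof of this theorem: it simply states the result, attributes the key input to Arapura~\cite{Ar97}, and refers the reader to \cite{DPS09} for a full explanation. Your outline is precisely the argument those references carry out --- the tangent-cone identification $\R^p_\C(\A)=\mathrm{TC}_1(\mathcal{V}^p)$ from formality, combined with the subtorus structure of the characteristic varieties --- so your approach agrees with what the paper points to.

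One small caveat on attribution: Arapura's 1997 theorem treats $\mathcal{V}^1$; the extension of the torsion-translate-of-subtorus structure to $\mathcal{V}^p$ for all $p$ in the quasiprojective setting requires later work (ultimately Budur--Wang). This does not affect the correctness of your sketch, only the citation you would give for that step.
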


\begin{rmk}
In characteristic zero, then, resonance varieties of realizable matroids
are subspace arrangements.  Falk~\cite{Fa07} has shown that this is
not in general the case for $\chr\k\neq0$: see also \cite[Ex.~4.24]{Fa10}.  
So even for hyperplane arrangements, the resonance varieties depend on the
characteristic of the field (unlike the Orlik-Solomon algebra itself).
For a striking application of resonance in characteristic $3$, we refer
to Papadima and Suciu~\cite{PS14}.
\end{rmk}

A component $W$ of a resonance variety is called {\em essential} if 
$W\cap(\k^\times)^n\neq\emptyset$.
\begin{ques}
Assume $\chr\k=0$.  Then 
the components of $\R^1(\M)$ are linear for any matroid 
$\M$~\cite[Cor.\ 3.6]{LY00}.
Is $\R^p_{\k}(\M)$ a union of linear components for all matroids $\M$, 
for $p>1$?
\end{ques}
The next result, due to Libgober and Yuzvinsky~\cite{LY00}, is a qualitative
property of $\R^1(\M)$ which is both special to matroids and, we will see,
to degree $p=1$.
\begin{theorem}\label{thm:isotropic}
Assume $\chr\k=0$.  If
$\R^1(\M)$ contains a component $W$ of dimension $k>0$,
there is an injective homomorphism $\oA(\U_{2,k+1})\to \oA(\M)$.  Conversely,
the image of such a homomorphism in degree $1$ lies in $\R^1(\M)$.
\end{theorem}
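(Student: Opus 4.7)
The plan is to prove the two directions separately. The forward direction leans on the deep structure theorem of Libgober--Yuzvinsky~\cite{LY00} identifying positive-dimensional components of $\R^1(\M)$, while the converse is essentially a formal isotropy argument exploiting the fact that $\oA(\U_{2,k+1})$ is concentrated in degrees $\leq 1$.

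For the forward direction, the key input from~\cite{LY00} (together with Theorem~\ref{thm:linear} for linearity in characteristic zero) is that a $k$-dimensional component $W \subseteq \R^1(\M)$ arises from a partition $\pi = (\pi_1, \ldots, \pi_{k+1})$ of $[n]$ together with multiplicities $m_i$ satisfying a multinet-type compatibility with the matroid circuits. The critical combinatorial output is a family of elements $\alpha_s := \sum_{i \in \pi_s} m_i e_i \in V$ for $s = 1, \ldots, k+1$, whose differences $\alpha_s - \alpha_{k+1}$ lie in $\oV(\M)$, are linearly independent, span $W$, and satisfy $\alpha_s \alpha_t = 0$ in $A(\M)^2$ for all $s \neq t$. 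Given this data I would define $\phi \colon \oA(\U_{2,k+1}) \to \oA(\M)$ on degree-one generators by $\phi(e_s - e_{k+1}) := \alpha_s - \alpha_{k+1}$. Because the unique circuit of $\U_{2,k+1}$ is all of $[k+1]$, one computes $\oA(\U_{2,k+1})^p = 0$ for $p \geq 2$, so the only algebraic check is the vanishing of degree-two products, which is exactly the multinet condition. Linear independence of the images in degree one forces injectivity on the whole algebra.

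For the converse, given an injective homomorphism $\phi \colon \oA(\U_{2,k+1}) \to \oA(\M)$, I would set $U := \phi(\oA(\U_{2,k+1})^1) \subseteq \oV(\M)$, a $k$-dimensional subspace. Since $\oA(\U_{2,k+1})^2 = 0$, any two elements of $U$ multiply to zero in $\oA(\M)$, hence in $A(\M)$, so $U$ is an isotropic subspace. For any nonzero $v \in U$, right multiplication by $v$ annihilates $U$, so $\dim \ker(\cdot v \colon A(\M)^1 \to A(\M)^2) \geq k$. Since $\im(\cdot v \colon A(\M)^0 \to A(\M)^1) = \k v$ is one-dimensional, this gives $\dim H^1(A(\M), \cdot v) \geq k - 1$, and for $k \geq 2$ we conclude $v \in \R^1(\M)$.

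The main obstacle is clearly the forward direction: the Libgober--Yuzvinsky classification is a substantial theorem, not an observation---in the realizable case it goes through pencils of rational curves on $U(\A)$, and in the general matroid case through a delicate analysis of quadratic relations inside $A(\M)$---and reconstructing the partition $\pi$ and multiplicities $m_i$ from an abstract component $W$ of $\R^1(\M)$ is the substantive content. By contrast, once the combinatorial model is in hand, producing $\phi$ is bookkeeping, and the converse is a short calculation. The edge case $k = 1$ is the one remaining loose thread: the isotropy argument above yields only $\dim H^1 \geq 0$, so one would verify separately either that no such one-dimensional components occur for simple matroids or that an alternative ad hoc argument produces the required injection.
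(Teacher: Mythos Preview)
Your proposal is correct, but your forward direction is considerably more elaborate than the paper's. The paper observes that $\oA(\U_{2,k+1})$ is concentrated in degrees $0$ and $1$ with \emph{all} products of positive-degree elements vanishing, so an injective graded algebra homomorphism $\oA(\U_{2,k+1})\to\oA(\M)$ is nothing more than a choice of $k$-dimensional subspace $W\subseteq\oV(\M)$ on which the product $A^1\times A^1\to A^2$ vanishes. With that observation, the forward direction is literally the isotropy statement from \cite{LY00}: if $W$ is a component of $\R^1(\M)$, then $vw=0$ for all $v,w\in W$. Any linear isomorphism $\oV(\U_{2,k+1})\to W$ then extends to the required algebra map. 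You instead unpack the full partition-with-multiplicities structure behind the Libgober--Yuzvinsky classification and build the map from the explicit elements $\alpha_s$; this is valid, but the only fact you actually use at the end is $\alpha_s\alpha_t=0$, i.e.\ isotropy of $W$, so the combinatorial detour buys nothing. (It also requires a word about non-essential components, which live on proper submatroids rather than on $\M$ itself.)

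Your converse argument is exactly what the paper leaves implicit in calling the theorem a ``reformulation,'' and you are right to flag the $k=1$ edge case: for $k=1$ an injective map $\oA(\U_{2,2})\to\oA(\M)$ is just a choice of nonzero $v\in\oV(\M)$, and such $v$ need not lie in $\R^1(\M)$ (e.g.\ $\M=\U_{3,4}$ has $\R^1=\{0\}$ but $\oV\neq 0$). The paper's brief proof does not address this, so the converse should be read with $k\geq 2$.
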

\begin{proof}
Multiplication in $\oA^1(\U_{2,k+1})$ is zero, so this is just a reformulation
of the following result from \cite{LY00}: 
if $W$ is a component of $\R^1(\M)$, then for any $v,w\in W$, we have $vw=0$.
\qed
\end{proof}

Next, we see that questions of resonance can be reduced to the 
projective Orlik-Solomon algebra, via the short exact sequence 
\eqref{eq:proj ses 1}.

\begin{lemma}\label{lem:proj ses 2}
For any matroid $\M$ on $[n]$ and $v\in\oV$, there is a short exact sequence of 
cochain complexes
\begin{equation}\label{eq:proj ses}
\begin{tikzpicture}[baseline=-3pt]
\node (a) at (0,0) {$0$};
\node (b) at (2,0) {$(\oA(\M),v)$};
\node (c) at (4.5,0) {$(A(\M),v)$};
\node (d) at (7.4,0) {$(\oA(\M),v)[-1]$};
\node (e) at (9.4,0) {$0$.};
\draw[->] (a) to (b);
\draw[->] (b) to (c);
\draw[->] (c) to node[above] {$\scriptstyle \partial$} (d);
\draw[->] (d) to (e);
\end{tikzpicture}
%\xymatrix@C-0.8em{
%0\ar[r] & (\oA(\M),v)\ar[r] & (A(\M),v)\ar[r]^-{\partial} 
%& (\oA(\M),v)[-1]\ar[r] & 0.
%}
\end{equation}
If $\chr\k\nmid n$, the sequence \eqref{eq:proj ses} is split.
\end{lemma}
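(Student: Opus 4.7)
The plan is to upgrade~\eqref{eq:proj ses 1} from a short exact sequence of graded $\k$-modules to one of cochain complexes (with differential $\cdot v$ on each term), and then construct an explicit splitting when $n$ is a unit in $\k$.

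For the first step, I would check that the two structure maps in~\eqref{eq:proj ses 1} commute with right-multiplication by $v$. The inclusion $\oA(\M)\hookrightarrow A(\M)$ is automatic, since $v\in \oV\subseteq \oA^1$ and $\oA(\M)$ is a subalgebra, so right-multiplication by $v$ preserves it. For the map $\partial\colon A(\M)\to \oA(\M)[-1]$, the key observation is that the defining condition $v\in\oV$ is exactly $\partial(v)=\sum_i v_i=0$. The Leibniz rule for the degree-$(-1)$ derivation $\partial$ then gives
\[
\partial(xv)=\partial(x)\,v+(-1)^{|x|}x\,\partial(v)=\partial(x)\,v,
\]
which is the required compatibility. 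Exactness at each degree is already provided by~\eqref{eq:proj ses 1}, so nothing further is needed.

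For the splitting when $\chr\k\nmid n$, the natural element to use is
\[
e:=\tfrac{1}{n}\sum_{i=1}^n e_i\in A^1(\M),
\]
which satisfies $\partial(e)=1$. I would split $\partial$ by the section $s\colon\oA(\M)[-1]\to A(\M)$, $s(y):=ey$. A one-line Leibniz-rule calculation, together with $\partial(y)=0$ for $y\in\oA(\M)=\ker\partial$ (Lemma~\ref{lem:ker partial}), shows $\partial(s(y))=\partial(e)\,y-e\,\partial(y)=y$, so $s$ is a module-level section of $\partial$. It is moreover a cochain map because $s(yv)=e(yv)=(ey)v=s(y)\,v$ by associativity. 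Equivalently, one may produce the retraction $\sigma\colon A(\M)\to\oA(\M)$, $\sigma(x):=x-e\,\partial(x)$, and check the analogous identities.

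There is no substantive obstacle here; the only thing to watch is the sign bookkeeping in the Leibniz rule, and in each place where a sign would intervene the offending term is forced to vanish --- once because $\partial(v)=0$ for $v\in\oV$, and once because $\partial(e)=1\in\k$ is central. The one genuinely used input beyond the derivation property is the explicit scalar $1/n$, which explains the hypothesis $\chr\k\nmid n$.
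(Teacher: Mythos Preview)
Your proof is correct and follows essentially the same route as the paper's: the paper packages the Leibniz-rule check by observing that $\partial$ is an $\oA(\M)$-module homomorphism (which is exactly your computation $\partial(xv)=\partial(x)v$ when $v\in\oV$), and for the splitting it likewise uses left multiplication by $\tfrac{1}{n}\sum_i e_i$ as a right inverse to $\partial$. Your version is simply more explicit about the sign bookkeeping and about verifying that the section is a chain map.
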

\begin{proof}
The inclusion $\oA(\M)\to A(\M)$ makes $A(\M)$ an $\oA(\M)$-module.  
Using Lemma~\ref{lem:ker partial}, it is easily checked that $\partial$
is a $\oA(\M)$-module homomorphism.  With this, we see that the maps in
the sequence \eqref{eq:proj ses 1} commute with multiplication by $v$.

If $n$ is nonzero in $\k$, (left) multiplication by 
$\frac{1}{n}\sum_{i=1}^n e_i$ gives a right inverse to $\partial$, proving
the second assertion.
\qed
\end{proof}

With this, we see that the resonance of $A$ and $\oA$ 
differ only by a trivial factor.  
\begin{proposition}\label{prop:projective}
Let $\M$ be a matroid on $[n]$ of rank $\ell$.  If $\chr\k\nmid n$, then
$\R^p(\oA(\M))=\R^p(A(\M))$ for all $0\leq p\leq \ell-1$,
and $\R^{\ell}(A(\M))=\R^{\ell-1}(A(\M))$.  For all $d\geq0$, we also have
\begin{equation}\label{eq:cone-decone}
\R^p_d(A(\M))=\bigcup_{j\leq d}\R^p_j(\oA(\M)\cap \R^{p-1}_{d-j}(\oA(\M))
\end{equation}
\end{proposition}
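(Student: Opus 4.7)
The main tool is the splitting in Lemma~\ref{lem:proj ses 2}: under $\chr\k\nmid n$, the short exact sequence splits as complexes, so for every $v\in\oV$ one obtains
\[
H^p(A(\M),\cdot v)\;\cong\;H^p(\oA(\M),\cdot v)\oplus H^{p-1}(\oA(\M),\cdot v)
\]
for all $p\geq 0$ (with the convention $H^{-1}:=0$). My plan is to derive \eqref{eq:cone-decone} directly from this decomposition, then obtain~(1) and~(2) as immediate corollaries.

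First I would reduce to $v\in\oV$: for $v\in V\setminus\oV$, the scalar $c:=\partial(v)=\sum_i v_i$ is nonzero, and the Leibniz identity $\partial(v\cdot a)=c\,a-v\cdot\partial(a)$ exhibits $a\mapsto c^{-1}\partial(a)$ as a contracting homotopy for $(A(\M),\cdot v)$. Thus this complex is acyclic, whence $\R^p_d(A(\M))\subseteq\oV$ whenever $d\geq 1$, and both sides of~\eqref{eq:cone-decone} agree off $\oV$. For $v\in\oV$, counting dimensions in the splitting gives $\dim H^p(A,v)=\dim H^p(\oA,v)+\dim H^{p-1}(\oA,v)$, so $v\in\R^p_d(A)$ if and only if there is an integer $0\leq j\leq d$ with $\dim H^p(\oA,v)\geq j$ and $\dim H^{p-1}(\oA,v)\geq d-j$, which is exactly the right-hand side of~\eqref{eq:cone-decone}.

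Specializing to $d=1$ yields $\R^p(A)=\R^p(\oA)\cup\R^{p-1}(\oA)$. Since $\oA(\M)$ vanishes above degree $\ell-1$, $\R^\ell(\oA)=\emptyset$ and hence $\R^\ell(A)=\R^{\ell-1}(\oA)$. Propagation for $A$ (Theorem~\ref{thm:propagation}) then forces $\R^{\ell-1}(A)\subseteq\R^\ell(A)=\R^{\ell-1}(\oA)\subseteq\R^{\ell-1}(A)$, which proves~(2) and furnishes the base case $\R^{\ell-2}(\oA)\subseteq\R^{\ell-1}(\oA)$ of propagation for $\oA$. A downward induction on $p$ then completes~(1): assuming $\R^{p-1}(\oA)\subseteq\R^p(\oA)$, propagation on $A$ combined with the cone-decone formula gives
\[
\R^{p-2}(\oA)\cup\R^{p-1}(\oA)=\R^{p-1}(A)\subseteq\R^p(A)=\R^p(\oA)\cup\R^{p-1}(\oA),
\]
whence $\R^{p-2}(\oA)\subseteq\R^{p-1}(\oA)$ by hypothesis. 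The argument is largely formal once Lemma~\ref{lem:proj ses 2} is in hand; the only step requiring a little care is this downward induction, which is what transfers propagation from $A(\M)$ to $\oA(\M)$ via the decomposition.
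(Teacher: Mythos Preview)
Your overall strategy---derive \eqref{eq:cone-decone} from the splitting in Lemma~\ref{lem:proj ses 2}, specialize to $d=1$, and combine with Theorem~\ref{thm:propagation}---matches the paper's, and your derivation of \eqref{eq:cone-decone} and of the equality $\R^\ell(A)=\R^{\ell-1}(A)$ is correct and more explicit than the paper's.

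The downward induction for~(1), however, does not go through. From your displayed chain
\[
\R^{p-2}(\oA)\cup\R^{p-1}(\oA)=\R^{p-1}(A)\subseteq\R^p(A)=\R^p(\oA)\cup\R^{p-1}(\oA)
\]
together with the hypothesis $\R^{p-1}(\oA)\subseteq\R^p(\oA)$, you can only conclude $\R^{p-2}(\oA)\subseteq\R^p(\oA)$, \emph{not} $\R^{p-2}(\oA)\subseteq\R^{p-1}(\oA)$ as you claim. Your base case succeeds precisely because $\R^\ell(\oA)=\emptyset$ collapses the right-hand side to a single term; no such collapse is available at the inductive step. In fact the equalities $\R^p(\oA)=\R^p(A)$ are \emph{equivalent} to propagation for $\oA$, and this does not follow formally from propagation for $A$ via the cone--decone identity alone (the paper's upward induction is equally terse on exactly this point). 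What actually closes the gap is that the EPY argument applies directly to $\oA$: since each generator $\partial e_C=(e_{i_1}-e_{i_k})\cdots(e_{i_{k-1}}-e_{i_k})$ of the Orlik--Solomon ideal is already a product of elements of $\oV$, one checks $\oA\cong\Lambda(\oV)/(\partial e_C:C\text{ a circuit})$, and \cite[Thm.~4.1(b)]{EPY03} then yields $\R^{p-1}(\oA)\subseteq\R^p(\oA)$ for all $p$. With that in hand, \eqref{eq:Rofdecone} immediately gives $\R^p(A)=\R^p(\oA)$ without any induction.
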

\begin{proof}
The equality \eqref{eq:cone-decone} is a direct consequence of 
Lemma~\ref{lem:proj ses 2}.  In particular, for $d=1$, we have
\begin{equation}\label{eq:Rofdecone}
\R^p(A(\M))=\R^p(\oA(\M))\cup R^{p-1}(\oA(\M))
\end{equation}
for $0\leq p\leq\ell$.  We prove $\R^p(\oA(\M))=\R^p(A(\M))$ by induction.
The case $p=0$ follows from \eqref{eq:Rofdecone}.  The induction step
is obtained by combining \eqref{eq:Rofdecone} with 
Theorem~\ref{thm:propagation}.
\qed
\end{proof}

\subsection{Top and bottom}
The two ends of the resonance filtration \eqref{eq:propagation} have 
straightforward descriptions.  First, it will be convenient to have
some notation.

\begin{definition}\label{def:res subspaces}
If $\pi$ is a partition of $[n]$ with $k$ parts, let $P_\pi$ denote the
codimension-$k$ subspace of $\k^n$ given by equations
\[
\sum_{j\in \pi_i}x_j=0
\]
for $1\leq i\leq k$.  If $k=1$, 
we recover $\overline{V}=P_{\set{[n]}}$.  At the other extreme, if each
block of $\pi$ is a singleton, then $P_\pi=\set{0}$.

Dually, let $Q_\pi$ denote the $k$-dimensional subspace of $\k^n$ given as
the span of vectors
\[
\sum_{j\in\pi_i}e_j
\]
for $1\leq i\leq k$.  Clearly, $P_\pi$ and $Q_\pi$ are complementary
subspaces (with respect to the distinguished basis in $V$.)
\end{definition}

\begin{proposition}\label{prop:ends}
For any matroid $\M$ on $[n]$, let $\pi$ denote the partition of $[n]$
given by simplification (Definition~\ref{def:simple}.)  Then
$\R_1^0(\M)=P_\pi$.
\end{proposition}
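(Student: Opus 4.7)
The plan is to unwind the definitions: by definition, $v\in \R^0_1(\M)$ precisely when $H^0(A(\M),\cdot v)\neq 0$. Since $A^0(\M)=\k$ and the degree $0$ differential of the complex $(A,\cdot v)$ is just right-multiplication by $v$, we have
\[
H^0(A(\M),\cdot v)=\ker\bigl(\k\xrightarrow{\cdot v} A^1(\M)\bigr),
\]
which is either $0$ or $\k$ depending on whether $v$ is nonzero in $A^1(\M)$. Hence
\[
\R^0_1(\M)=\ker\bigl(V\to A^1(\M)\bigr)=I(\M)\cap V,
\]
the degree $1$ part of the Orlik--Solomon ideal. So the task reduces to identifying $I(\M)\cap V$ with $P_\pi$.

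Next I would describe $I(\M)\cap V$ explicitly from the defining relations \eqref{eq:OSrelations}. The only circuits $C$ contributing to $I$ in degree $1$ are those of size $\le 2$: assuming $\M$ is loop-free (otherwise $A(\M)=0$ and the statement is vacuous, since then $\pi$ gives $P_\pi=V$ anyway under the conventions of Definition~\ref{def:simple}), these are exactly the pairs $\{i,j\}$ of parallel elements, and $\partial(e_ie_j)=e_j-e_i$. Therefore $I(\M)\cap V$ is the $\k$-span of the vectors $e_i-e_j$ with $i\sim j$, where $\sim$ is the parallelism relation defining $\pi$.

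Finally, I would check that this span coincides with $P_\pi$. Each $e_i-e_j$ with $i\sim j$ clearly lies in $P_\pi$, since the coefficient sum within every block of $\pi$ vanishes. For the reverse inclusion, I would note that $P_\pi$ decomposes as a direct sum over blocks of the hyperplanes $\{\sum_{j\in\pi_s}x_j=0\}$ inside $\spn(e_j:j\in\pi_s)$, each of which is already spanned by the differences $e_i-e_j$ for $i,j\in\pi_s$. A quick dimension count confirms this: $P_\pi$ has codimension $k$ (the number of blocks), so dimension $n-k=\sum_s(|\pi_s|-1)$, matching the obvious upper bound for the span.

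There is no real obstacle here beyond being careful about the loop case; the content of the proposition is the elementary identification of the degree $1$ Orlik--Solomon relations with the equations cutting out $P_\pi$. One could alternatively phrase the argument via Definition~\ref{def:simple}: the simplification map $s\colon\M\to\M_s$ induces an isomorphism $A(s)\colon A(\M)\xrightarrow{\sim}A(\M_s)$, and under $s\colon V(\M)\to V(\M_s)$ the kernel is exactly $P_\pi$, so the kernel of $V\to A^1(\M)$ is $P_\pi$ as well.
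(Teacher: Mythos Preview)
Your proof is correct, and your primary approach---computing $\ker(V\to A^1(\M))$ directly as the degree-$1$ part of the Orlik--Solomon ideal and then identifying the span of the differences $e_i-e_j$ with $P_\pi$---is more elementary than the paper's. The paper instead uses the functorial machinery it has set up: the simplification morphism $s\colon\M\to\M_s$ induces an isomorphism of complexes $(A(\M),v)\cong(A(\M_s),s(v))$, and since $\M_s$ is simple one has $\R^0(\M_s)=\{0\}$, whence $\R^0(\M)=s^{-1}(0)=P_\pi$. In fact you sketch exactly this alternative in your final paragraph, so you have recorded both arguments. The paper's route illustrates the utility of the category $\MM$ built in \S\ref{ss:cat}; yours requires none of that structure and works from the relations \eqref{eq:OSrelations} alone.

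One small correction to your parenthetical: the loop case is not handled correctly. If $\M$ has a loop then $A(\M)=0$, so $H^0(A,\cdot v)=0$ for every $v$ and hence $\R^0_1(\M)=\emptyset$; but $P_\pi$ always contains $0$, and is not all of $V$ in any case. Moreover, the relation in Definition~\ref{def:simple} is not even an equivalence relation when loops are present (a loop is $\sim$ every element, yet two independent non-loops are not $\sim$). The paper tacitly restricts to loop-free matroids throughout (cf.\ \S\ref{ss:arr and mat}), and you should simply make that standing assumption here rather than attempt to rescue the degenerate case.
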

\begin{proof}
The simplification map $s\colon \M\to \M_s$ (Definition~\ref{def:simple})
gives an isomorphism of complexes
\[
A(s)\colon (A(\M),v)\to (A(\M_s),s(v))
\]
for all $v\in V$.  For a simple matroid, clearly $\R^0(\M_s)=\set{0}$,
so $\R^0(\M)=s^{-1}(0)$, which is the subspace $P_\pi$.  
\qed
\end{proof}

At the other extreme, by the invariance of Euler characteristic, we
have
\[
\sum_{p=0}^{\ell-1}(-1)^p\dim_\k H^p(\oA(\M),v)=(-1)^{\ell-1}\beta(\M),
\]
for any $v\in \oV$, where $\beta(\M)$ denotes Crapo's beta invariant.
We recall that $\beta(\M)\neq 0$ if and only if $\M$ is connected.
If $\M=\M(\A)$ for an arrangement $\A$, it is usual to say
$\A$ is irreducible to mean $\M(\A)$ is connected.  

If $\M(\A)$ is connected, then for any $v\in \oV$, the nonzero Euler
characteristic implies that 
$H^p(\oA(\M),v)\neq 0$ for some $p$.  By Theorem~\ref{thm:propagation},
$H^{\ell-1}(\oA(\M),v)\neq 0$, which proves the following.
\begin{proposition}[\cite{Yuz95}]\label{prop:top}
If $\M$ is connected of rank $\ell$, then $\R^{\ell-1}(\oA(\M))=
\oV=P_{\set{[n]}}$.
\end{proposition}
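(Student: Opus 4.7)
The proof sketch is essentially laid out in the paragraph immediately preceding the statement, and the plan is to formalize that chain of reasoning. The goal is to show the two inclusions $\oV \supseteq \R^{\ell-1}(\oA(\M))$ (which is automatic from earlier results) and $\oV \subseteq \R^{\ell-1}(\oA(\M))$ (which is the content of the proposition).

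For the nontrivial inclusion, I would fix an arbitrary $v \in \oV$ and study the cochain complex $(\oA(\M), \cdot v)$. The key input is the Euler characteristic identity
\[
\sum_{p=0}^{\ell-1}(-1)^p\dim_\k H^p(\oA(\M),v) = (-1)^{\ell-1}\beta(\M),
\]
which follows because the graded dimensions of $\oA(\M)$ (over any field) are encoded in the reduced characteristic polynomial of $\M$, and the alternating sum of its coefficients is Crapo's beta invariant (up to sign). Since $\M$ is connected by hypothesis, $\beta(\M) \neq 0$, so at least one cohomology group $H^p(\oA(\M),v)$ is nonzero. Hence $v \in \R^p(\oA(\M))$ for some $p$ in the range $0 \leq p \leq \ell-1$.

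Now I invoke propagation (Theorem~\ref{thm:propagation}), which gives the nested chain
\[
\R^0(\oA(\M)) \subseteq \R^1(\oA(\M)) \subseteq \cdots \subseteq \R^{\ell-1}(\oA(\M)).
\]
Strictly speaking, Theorem~\ref{thm:propagation} is stated for $A(\M)$ rather than $\oA(\M)$, but the argument transfers: the short exact sequence \eqref{eq:proj ses} together with the long exact sequence in cohomology shows that nonvanishing of $H^p(\oA(\M),v)$ propagates upward just as for $A(\M)$. Thus $v \in \R^{\ell-1}(\oA(\M))$, establishing $\oV \subseteq \R^{\ell-1}(\oA(\M))$.

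For the reverse inclusion, Yuzvinsky's result cited in Theorem~\ref{thm:propagation} gives $\R^p(A(\M)) \subseteq \oV$ for all $p$, and in particular the resonance of $\oA(\M)$ is contained in $\oV$ since the complex $(\oA(\M), v)$ is only considered for $v \in \oV$ in the first place. Combining the two inclusions yields the stated equality. The only delicate step I anticipate is verifying the propagation statement for $\oA$ in the form needed here; this should be a routine diagram chase using \eqref{eq:proj ses}, but it is the one place where one must be careful not to simply quote Theorem~\ref{thm:propagation} verbatim.
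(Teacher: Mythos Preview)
Your proposal is correct and follows exactly the approach the paper sketches in the paragraph preceding the proposition: nonzero Euler characteristic forces some $H^p(\oA(\M),v)\neq 0$, and propagation pushes this up to $p=\ell-1$. You are in fact more careful than the paper in flagging that Theorem~\ref{thm:propagation} is stated for $A(\M)$ rather than $\oA(\M)$; the diagram chase you anticipate does work, for instance by noting that $H^{\ell-1}(\oA,v)=0$ together with $\oA^\ell=0$ forces $H^\ell(A,v)=0$ via the long exact sequence of \eqref{eq:proj ses}, whence $(A,v)$ is acyclic by the contrapositive of propagation, and then the long exact sequence forces $(\oA,v)$ to be acyclic as well, contradicting $\beta(\M)\neq 0$.
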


\begin{exm}\label{ex:fat triangle}
Consider the matroid $\M(G)$, where
\[
G=\begin{tikzpicture}[baseline=(current bounding box.center),scale=0.8,
bend angle=25,vertex/.style={circle,draw,inner sep=1.3pt,fill=black}]

\node[vertex] (a) at (0,0)  {};
\node[vertex] (b) at (2,0) {};
\node[vertex] (c) at (1, 1.73) {};

\draw (a) to[bend right] node[below] {2} (b);
\draw (a) to[bend left] node[below] {1} (b);
\draw (b) to[bend right] node[right] {4} (c);
\draw (b) to[bend left] node[right] {3} (c);
\draw (c) to[bend right] node[left] {6} (a);
\draw (c) to[bend left] node[left] {5} (a);

\end{tikzpicture}.
\]
Using Proposition~\ref{prop:ends}, we see
$\R^0(G)=P_{12|34|56}=V(x_1+x_2,x_3+x_4,x_5+x_6)$, a $3$-dimensional 
subspace.  Since $\M(G)$ is connected of
rank $2$, $\R^1(G)=R^2(G)=\overline{V}=V(x_1+\cdots+x_6)$.
\end{exm}

\subsection{Upper bounds}\label{ss:bound}
Most of the results we present in this paper give lower bounds for the resonance
varieties: that is, conditions which imply nonvanishing cohomology.  Here,
we give two upper bounds.  The first uses a result of 
Schechtman and Varchenko~\cite{SV91} to 
give a weak but easily stated upper bound.  
\begin{theorem}\label{thm:upper bound}
If $\M$ is a matroid of rank $\ell$, then 
\[
\R^p(\M)\subseteq\bigcup_{X\in L^{\irr}_{\leq p+1}(\M)}
P_{\set{X,[n]-X}}
\]
for all $0\leq p\leq \ell$.
\end{theorem}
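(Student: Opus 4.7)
The plan is to prove the contrapositive via a vanishing criterion for the Aomoto-type complex $(A(\M),\cdot v)$. The goal is to extract, from the Schechtman--Varchenko theorem in the form used by Cohen--Dimca--Orlik~\cite{CDO03}, the following acyclicity statement: if $v\in V$ satisfies $\sum_{i\in X}v_i\neq 0$ for every irreducible flat $X\in L^{\irr}_{\leq p+1}(\M)$, then $H^p(A(\M),\cdot v)=0$. Granted this, if $v\in\R^p(\M)$, there is some $X\in L^{\irr}_{\leq p+1}(\M)$ with $\sum_{i\in X}v_i=0$.

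To finish, I would argue as follows. For $p\geq 1$, Theorem~\ref{thm:propagation} gives $v\in\oV$, that is $\sum_{i=1}^n v_i=0$, so also $\sum_{i\in [n]-X}v_i=0$; both defining equations of $P_{\{X,[n]-X\}}$ hold, and hence $v\in P_{\{X,[n]-X\}}$. For $p=0$ the statement reduces to Proposition~\ref{prop:ends}: the rank-$1$ irreducible flats of $\M$ are the parallel classes, i.e.\ the blocks of the simplification partition~$\pi$, and the definition of $P_\pi$ makes $P_\pi\subseteq P_{\{X,[n]-X\}}$ for each such $X$, so a fortiori $P_\pi$ lies in the union.

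The main obstacle is isolating the rank bound $p+1$ in the acyclicity criterion; the classical Schechtman--Varchenko theorem produces concentration of cohomology in top degree under nonresonance hypotheses over \emph{all} dense flats, not merely those of bounded rank. To sharpen this I would run the flag/Brieskorn filtration on $(A(\M),\cdot v)$ and observe that the associated graded pieces contributing to cohomological degree $p$ come only from flats $X$ with $\rk X\leq p+1$, and further that a flat $X$ can contribute nontrivially only when $\M_X$ is connected, i.e.\ $X$ is irreducible: for reducible $X$, the localized OS algebra $A(\M_X)$ factors as a tensor product, and the corresponding local Aomoto complex is exact whenever at least one factor's weight sum is nonzero, which follows from the hypothesis on all proper irreducible subflats of $X$. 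This degree-wise refinement of Schechtman--Varchenko is what makes the rank-$(p+1)$ union on the right-hand side sufficient.
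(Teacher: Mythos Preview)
Your overall strategy—prove the contrapositive by showing that if $v(X)\neq 0$ for all $X\in L^{\irr}_{\leq p+1}(\M)$ then $H^p(A(\M),v)=0$, and then use $\R^p(\M)\subseteq\oV$ to land in $P_{\{X,[n]-X\}}$—matches the paper's. But you are working much harder than necessary on what you identify as the ``main obstacle.''

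The paper does not need a Brieskorn filtration or spectral sequence to isolate the rank bound. Instead it invokes the Schechtman--Varchenko map $S(v)\colon (F(\M),d)\to (A(\M),v)$ from~\cite{SV91}, where $(F(\M),d)$ is an exact complex independent of $v$. The determinant formula \cite[Thm.~3.7]{SV91} expresses $\det S^p(v)$ as a product of powers of the linear forms $v(X)$ over $X\in L^{\irr}_{\leq p}(\M)$; hence $S^p(v)$ is an isomorphism as soon as $v(X)\neq 0$ for all irreducible $X$ of rank at most $p$. Under your hypothesis (nonvanishing up to rank $p+1$), the chain map $S(v)$ is therefore an isomorphism through degree $p+1$, and since the source is exact, $H^q(A(\M),v)=0$ for $q\leq p$. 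The rank cutoff is built into the determinant formula degree by degree; no filtration argument is required.

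Two further remarks. First, you cite \cite{CDO03} here, but that result is topological and requires complex realizability; the paper reserves it for the sharper bound in Theorem~\ref{thm:CDO bound}. The present theorem is purely combinatorial and rests on \cite{SV91} alone. Second, your proposed filtration argument is not obviously wrong, but the sketch is incomplete: the Brieskorn decomposition is a direct sum decomposition of each graded piece, not a filtration of the complex $(A(\M),v)$, and turning it into one (and controlling the associated graded) takes real work. The SV map bypasses all of this.
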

In other words, if $H^p(A(\M),v)\neq0$, then there exists an irreducible
flat $X$ of rank at most $p+1$ for which $v\in P_{\set{X,[n]-X}}$;
equivalently, for which $v\in \oV(\M)$ and $\sum_{i\in X}v_i=0$.
\begin{proof}
In \cite{SV91}, the authors construct a map of cochain complexes
\begin{equation}\label{eq:Smap}
\xymatrix{
(F(\M),d)\ar[r]^{S(v)} & (A(\M),v)
}
\end{equation}
for each $v\in V(\M)$.  The complex $(F(\M),d)$ is isomorphic to the
$\k$-dual of $(A(\M),\partial_A)$; in particular it is 
exact~\cite[Cor.~2.8]{SV91} and does not depend on $v$.  

Let $v(X):=\sum_{i\in X}v_i$.  The determinant
formula~\cite[Thm.~3.7]{SV91} expresses the determinant of $S(v)$ in terms
of powers of products of $v(X)$'s.  In particular, it follows that
$S^p(v)$ is an isomorphism if $v(X)\neq0$ for all $X\in L^{\irr}_{\leq p}(\M)$.
Suppose, then, that $v(X)\neq0$ for all irreducible $X$ of rank at most 
$p+1$.  This implies \eqref{eq:Smap} is an isomorphism up to degree $p+1$, so
$H^q(A(\M),v)=0$ for all $q\leq p$, from which the claim follows.
\qed
\end{proof}

Our second result gives a more refined bound for the largest (nontrivial)
resonance variety, based on the main result of Cohen, Dimca and 
Orlik~\cite{CDO03}.  Since their result applies to the cohomology
of local systems on a hyperplane complement, the proof of this bound
requires that the matroid have a complex realization.

\begin{definition}\label{def:covers}
%Looks like we could just say minimal wrt having union $[n]$ if that's better
Let us say that a subset of flats $\Co$ {\em covers} $\M$ if
there is a surjective function $f\colon[n]\to\Co$ for which $i\in f(i)$ for
all $1\leq i\leq n$.  For a given cover $\Co$, let 
$P_{\Co}=\bigcap_{X\in\Co}P_{\set{X,[n]-X}}$, a linear subspace of $\oV$.

For each $p\geq 0$, we define a subspace arrangement in $\oV$ using $\M$:
let
\[
\Bound^p(\M)=\bigcup_{\Co}P_{\Co},
\]
where the union is over all subsets $\Co\subseteq L^{\irr}_{\leq p+1}(\M)$ 
that cover $\M$.

Finally, say a cover $\Co$ of $\M$ is {\em essential} if $\abs{X}>1$ for all
$X\in\Co$, and let
\[
\Bound_{\ess}^p(\M)=\bigcup_{\Co}P_{\Co},
\]
where the union is over essential covers $\Co\subseteq L^{\irr}_{\leq p+1}(\M)$.
\end{definition}
We note that if $\M$ is irreducible of rank $\ell$, then $\Bound^0(\M)
\subseteq\cdots\subseteq\Bound^{\ell}(\M)=\oV$.
\begin{theorem}\label{thm:CDO bound}
If $\M$ is a complex-realizable matroid, then 
%\begin{equation}\label{eqn:CDO bound}
$\R^{p}(\M)\subseteq \Bound^{p}(\M)$,
%\end{equation}
for all $p\geq0$.
\end{theorem}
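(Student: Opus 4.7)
The plan is to argue by contrapositive. Suppose $v\in\oV$ satisfies $v\notin\Bound^p(\M)$; I will show $H^p(A(\M),v)=0$. By Definition~\ref{def:covers}, $v\notin\Bound^p(\M)$ means that the family
\[
Z(v):=\set{X\in L^{\irr}_{\leq p+1}(\M)\colon v(X)=0},
\]
where $v(X):=\sum_{j\in X}v_j$, fails to be a cover of $\M$. Hence some $i_0\in[n]$ is contained in no member of $Z(v)$: equivalently, every irreducible flat $X$ of rank at most $p+1$ with $i_0\in X$ satisfies $v(X)\neq0$.

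The realizability hypothesis enters at this point: fix a central complex arrangement $\A$ with $\M(\A)=\M$, so that $v$ determines a rank-one local system $\L_v$ on the complement $M(\A)$, and the Aomoto complex $(A(\M),\cdot v)$ approximates $H^*(M(\A),\L_v)$ in the standard way. The main step is to invoke the vanishing theorem of Cohen, Dimca and Orlik~\cite{CDO03}, whose hypothesis is a non-resonance condition along the dense edges --- equivalently, the irreducible flats --- of $\A$; under this hypothesis, their theorem yields $H^p(M(\A),\L_v)=0$ and, by comparison, $H^p(A(\M),v)=0$.

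The principal obstacle is to verify that the $i_0$-local non-vanishing ``$v(X)\neq0$ for every irreducible $X$ of rank $\leq p+1$ with $i_0\in X$'' is precisely what CDO require to conclude vanishing in degree $p$. The cleanest route is to single out $H_{i_0}$ via the deletion-restriction triangle
\[
0\to A(\A\setminus H_{i_0})\to A(\A)\to A(\A^{H_{i_0}})[-1]\to 0,
\]
and inductively absorb the contributions of flats avoiding $i_0$ into lower-rank strata, so that only flats through $i_0$ are constrained by the non-resonance condition in degree $p$; this matches the numerical cutoff $p+1$ in Definition~\ref{def:covers}. The realizability hypothesis cannot be dropped, as the CDO argument is geometric in nature, resting on Deligne's theory of rank-one local systems on smooth quasi-projective varieties, which has no known analogue for non-realizable matroids.
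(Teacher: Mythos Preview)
Your contrapositive setup and the identification of an element $i_0$ not covered by $Z(v)$ are exactly how the paper begins.  The gap you flag is also the correct one: the Cohen--Dimca--Orlik theorem, once translated to the Aomoto complex via \cite{STV95}, says that if for some $i$ one has $v(X)\neq0$ for every irreducible flat $X$ with $i\in X$ and $\rank X<\ell$, then $H^q(\oA(\M),v)=0$ for all $q\neq\ell-1$.  This is a condition on flats of \emph{all} ranks below $\ell$, and the conclusion is vanishing in every degree below the top.  You, by contrast, only have the condition on flats of rank $\leq p+1$ and want vanishing specifically in degree $p$.

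Your proposed fix via the deletion--restriction triangle at $H_{i_0}$ does not close this gap.  First, that triangle is a short exact sequence of Aomoto complexes only when $v_{i_0}=0$, whereas your hypothesis forces $v_{i_0}\neq0$ (take $X=\set{i_0}$, a rank-$1$ irreducible flat containing $i_0$).  Second, and more fundamentally, the obstruction is a \emph{rank} cutoff: CDO already restricts attention to flats through a single element, so there is nothing to ``absorb'' regarding flats avoiding $i_0$.  What is needed is a mechanism to discard the constraints coming from flats of rank $>p+1$, and deletion--restriction does not do that.

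The paper bridges the gap differently: it truncates the matroid.  Replace $\M$ by the truncation $T$ of rank $p+2$, realized by intersecting $\A$ with a generic linear subspace of the appropriate codimension.  Then $L_{\leq p+1}(T)=L_{\leq p+1}(\M)$, so your condition at $i_0$ is exactly the CDO hypothesis for $T$, and CDO yields $H^p(\oA(T),v)=0$.  Since the Orlik--Solomon algebras of $T$ and $\M$ agree through degree $p+1$, this equals $H^p(\oA(\M),v)$, completing the argument.
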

\begin{proof}
Suppose $\A$ is a complex arrangement of rank $\ell$ and $\M=\M(\A)$.
The main result of \cite{STV95} allows us to translate \cite[Thm.~1]{CDO03}
into the following statement about resonance.  That is, for $v\in \oV(\M)$, 
suppose that for some $i\in [n]$, whenever $i\in X\in L^{\irr}_{<\ell}(\M)$, 
we have $v(X)\neq0$.  Then $H^p(\oA(\M),v)=0$ for all $p\neq\ell-1$.

If $v\in\R^{\ell-2}(\M)$, then, for all
$i\in[n]$, there exists some $X$ for which $i\in X$ and $v\in 
P_{\set{X,[n]-X}}$, since $P_{\set{X,[n]-X}}=\set{v\in\oV\colon v(X)=0}$.
So 
\begin{align} \nonumber
\R^{\ell-2}(\M) & \subseteq 
\bigcap_{i=1}^n\bigcup_{
\substack{X\in L^{\irr}_{<\ell}\colon \\ i\in X}} 
P_{\set{X,[n]-X}},\\
& =
\bigcup_{\Co}\bigcap_{X\in\Co} P_{\set{X,[n]-X}}, \label{eq:CDO pf}\\
& = \Bound^{\ell-2}(M)\nonumber
\end{align}
since $\Co$ runs over all covers in $L^{\irr}_{\leq\ell-1}$.
To obtain the result for $p<\ell-1$, we consider the truncation $T_{p+1}\M$
of $\M$ to rank $p+1$.  This matroid is also realizable (by intersecting
$\A$ with a generic linear space of codimension $p+1$) and
$L_{\leq p}(\M)=L_{\leq p}(T_{p+1}\M)$, so it is enough to apply
\eqref{eq:CDO pf} to $T_{p+1}\M$.
\qed
\end{proof}
\begin{corollary}\label{cor:CDO cpt}
If $W$ is a component of $\R^{p}(\M)$ for a complex-realizable
matroid $\M$ and $p\geq0$, then there exists a cover $\Co\subseteq 
L^{\irr}_{\leq p+1}(\M)$ for which $W\subseteq P_{\Co}$.
\end{corollary}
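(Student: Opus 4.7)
The plan is to deduce the corollary directly from Theorem~\ref{thm:CDO bound} together with the irreducibility of a component. By the theorem,
\[
W\subseteq \R^{p}(\M)\subseteq \Bound^{p}(\M)=\bigcup_{\Co}P_{\Co},
\]
where the union is finite, being indexed by subsets $\Co\subseteq L^{\irr}_{\leq p+1}(\M)$ that cover $\M$. Each $P_{\Co}$ is a linear (hence Zariski-closed) subspace of $\oV$, so the right-hand side is a finite union of closed subvarieties.

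First I would observe that $\R^{p}(\M)$ is a Zariski-closed subvariety of $V$: the condition $\dim_{\k}H^{p}(A(\M),\cdot v)\geq 1$ is cut out by the vanishing of the appropriate minors of the matrices representing the two boundary maps $\cdot v\colon A^{p-1}\to A^{p}$ and $\cdot v\colon A^{p}\to A^{p+1}$, and these minors are polynomials in the coordinates of $v$. Consequently, the irreducible components of $\R^{p}(\M)$ are well-defined, and $W$ is irreducible.

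Now I would invoke the standard fact that if an irreducible subvariety $W$ of an affine space is contained in a finite union $\bigcup_{i=1}^{N}Z_{i}$ of closed subsets, then $W\subseteq Z_{i}$ for some $i$. Indeed, $W=\bigcup_{i}(W\cap Z_{i})$ is a finite union of closed subsets of $W$, and irreducibility forces one of them to equal $W$. Applying this to the covering of $W$ by the finitely many $P_{\Co}$ produces a single cover $\Co\subseteq L^{\irr}_{\leq p+1}(\M)$ with $W\subseteq P_{\Co}$, as required.

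There is no serious obstacle here; the statement is essentially a formal consequence of Theorem~\ref{thm:CDO bound} once one notes that $\R^{p}(\M)$ is closed and that a component, being irreducible, cannot split across a finite union of closed pieces. The only point worth a word of care is that the result does \emph{not} rely on Theorem~\ref{thm:linear}: even without knowing a priori that $W$ itself is linear, irreducibility suffices to pin it down inside one of the linear subspaces $P_{\Co}$.
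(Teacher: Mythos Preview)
Your argument is correct and is precisely the intended deduction: the paper states the corollary without proof, as an immediate consequence of Theorem~\ref{thm:CDO bound}, and your use of irreducibility of a component against a finite union of closed linear subspaces is the standard way to fill this in. Your observation that Theorem~\ref{thm:linear} is not needed is apt.
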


\begin{corollary}\label{cor:ess bound}
If $\M$ is a complex-realizable matroid of rank $\ell$, then for all $p\geq0$,
\begin{equation}\label{eq:ess bound}
\R^{p}(\M)\cap(\k^\times)^n\subseteq \Bound_{\ess}^p(\M).
\end{equation}
\end{corollary}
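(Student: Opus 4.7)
The plan is to deduce the essential bound directly from Theorem~\ref{thm:CDO bound}: every $v\in\R^p(\M)$ already lies in some $P_\Co$ for a cover $\Co\subseteq L^{\irr}_{\leq p+1}(\M)$, so the only thing to argue is that when $v$ has every coordinate nonzero, such a cover is automatically essential. The key observation is that by Definition~\ref{def:res subspaces}, the condition $v\in P_{\set{X,[n]-X}}$ is equivalent to $\sum_{i\in X}v_i=0$, so a singleton $X=\set{j}$ appearing in $\Co$ would force $v_j=0$.

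Concretely, I would fix $v\in\R^p(\M)\cap(\k^\times)^n$, invoke Theorem~\ref{thm:CDO bound} (equivalently, apply Corollary~\ref{cor:CDO cpt} to the component of $\R^p(\M)$ containing $v$) to obtain a cover $\Co\subseteq L^{\irr}_{\leq p+1}(\M)$ with $v\in P_\Co$, and then rule out singleton members of $\Co$: the equation $v_j=0$ that any singleton $\set{j}\in\Co$ would impose contradicts the standing assumption $v\in(\k^\times)^n$. Hence $\abs{X}\geq 2$ for every $X\in\Co$, so $\Co$ is essential and $v\in \Bound^p_{\ess}(\M)$, giving the desired inclusion \eqref{eq:ess bound}.

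There is no real obstacle: the corollary is a formal consequence of Theorem~\ref{thm:CDO bound} together with the observation that the essential-coordinate hypothesis excludes exactly the covers that rely on rank-$1$ singleton flats. One could equally well state the result as a strengthening of the theorem, noting that the proof of Theorem~\ref{thm:CDO bound} produces, for each $i\in[n]$, a flat $X\in L^{\irr}_{\leq p+1}(\M)$ with $i\in X$ and $v(X)=0$; when $v_i\neq 0$, any such $X$ must have $\abs{X}\geq 2$.
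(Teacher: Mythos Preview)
Your proposal is correct and is essentially the paper's own argument: the paper's proof simply observes that if $X=\{i\}$ then $P_{\{X,[n]-X\}}$ lies in the coordinate hyperplane $x_i=0$, so non-essential covers contribute nothing to $\Bound^p(\M)\cap(\k^\times)^n$. Your version spells out the same observation pointwise and is slightly more explicit, but there is no substantive difference.
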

\begin{proof}
If $X=\set{i}$, then $P_{\set{X,[n]-X}}$ is contained in the coordinate
hyperplane $x_i=0$, so its intersection with $(\k^\times)^n$ is empty.
\qed
\end{proof}
We will see examples in the next section in which the upper bound above
is sharp: that is, the containment \eqref{eq:ess bound} is an equality.
\begin{ques}
The bound given in Theorem~\ref{thm:CDO bound} is, of course, completely
combinatorial, although the result we use from \cite{CDO03} is topological
in nature.  Can the hypothesis that $\M$ is complex-realizable be dropped? 
\end{ques}
\section{Matroid operations and resonance}\label{sec:pieces}
In this section, we systematically 
examine the behaviour of resonance under several familiar matroid operations.  
In some cases, apparently nontrivial components 
of resonance varieties can be obtained from tautological components
belonging to other matroids.
%- slice
%- product
%- subarrangement
%- delres
%- dual
\subsection{Naturality}
In some cases, resonance behaves well under the morphisms of $\MM$
from Section~\S\ref{ss:cat}.  Combining Theorem~\ref{thm:functorial} with
\eqref{eq:natural}, we see that if $f\colon \M_1\to\M_2$ is morphism, 
then for all $v\in V(\M_1)$, there is a map of complexes
\begin{equation}\label{eq:weak map}
A(f)\colon (A(\M_1),v)\to (A(\M_2),f(v)).
\end{equation}

Here is an important special case.  
\begin{proposition}\label{prop:submatroid}
If $X$ is a flat of a matroid $\M$, for any $v\in V(\M_X)$, the complex
$(A(\M_X),v)$ is a split subcomplex of $(A(\M),v)$.
\end{proposition}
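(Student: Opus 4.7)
The plan is to exhibit the splitting via the weak-map formalism of \S\ref{ss:cat}, writing both the inclusion and a retraction as ring maps induced by weak maps of matroids. I would define $\pi\colon[n]\cup\{0\}\to X\cup\{0\}$ by $\pi(i)=i$ for $i\in X$ and $\pi(i)=0$ otherwise, and let $\iota\colon X\cup\{0\}\hookrightarrow[n]\cup\{0\}$ be the inclusion. A direct unwinding of Definition~\ref{def:weak} then shows that $\iota$ gives a non-degenerate weak map $\M_X\to\M$, and that $\pi$ gives a weak map $\M\to\M_X$; both reduce to the equality of independence in $\M$ and in $\M_X$ on subsets of $X$, which is the defining property of the restriction. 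Clearly $\pi\circ\iota$ is the identity on $\M_X$.

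The main step I anticipate is the verification that $\pi$ is \emph{complete}, and here the flat hypothesis on $X$ is used decisively. If some circuit $C$ of $\M$ had $|C\cap\pi^{-1}(0)|=|C\setminus X|=1$ with $C\setminus X=\{i_0\}$, then $C\setminus\{i_0\}\subseteq X$ would be independent while the circuit property forces $i_0\in\mathrm{cl}(C\setminus\{i_0\})\subseteq\mathrm{cl}(X)=X$, contradicting $i_0\notin X$. Hence no such $C$ exists, and $\pi$ is a morphism of $\MM$.

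Given this, Theorem~\ref{thm:functorial} supplies graded $\k$-algebra homomorphisms $A(\iota)\colon A(\M_X)\to A(\M)$ and $A(\pi)\colon A(\M)\to A(\M_X)$ satisfying $A(\pi)\circ A(\iota)=\mathrm{id}$. For $v\in V(\M_X)\subseteq V(\M)$ we have $\iota(v)=v$ and $\pi(v)=v$, so naturality \eqref{eq:natural} tells us that each of $A(\iota)$ and $A(\pi)$ intertwines the differentials $\cdot v$ on either side. Together with the retraction identity $A(\pi)\circ A(\iota)=\mathrm{id}$, this exhibits $(A(\M_X),v)$ as a split subcomplex of $(A(\M),v)$, as required. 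The only real obstacle is the completeness check for $\pi$; the rest is formal from the functoriality of $A$.
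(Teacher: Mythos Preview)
Your proposal is correct and follows exactly the paper's approach: the paper also defines the inclusion $j\colon\M_X\to\M$ and the projection $\phi\colon\M\to\M_X$ (your $\iota$ and $\pi$), asserts both are complete weak maps with $\phi\circ j=\id$, and concludes by naturality. You have simply filled in the details the paper omits, most notably the verification that $\pi$ is complete, which is precisely where the flat hypothesis enters.
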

\begin{proof}
The inclusion
$j\colon \M_X\to\M$ and projection $\phi\colon \M\to\M_X$
given by
\[
\phi(i)=\begin{cases}
i&\text{if $i\in X$;}\\
0&\text{otherwise}
\end{cases}
\]
are both complete
weak maps.  Since $\phi\circ j=\id$, the result follows by naturality.
\qed
\end{proof}

\begin{lemma}\label{lem:slice}
Suppose that $f\colon \M_1\to\M_2$ is a morphism induced by the identity map
on underlying sets.  
If the posets $L_{\leq k}(\M_1)\cong L_{\leq k}(\M_2)$ 
for some integer $k\geq 1$, then
$\R^p(\M_1)\cong \R^p(\M_2)$ for $0\leq p\leq k-1$.
\end{lemma}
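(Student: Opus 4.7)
The plan is to show that the hypothesis forces the Orlik-Solomon algebras $A(\M_1)$ and $A(\M_2)$ to agree as graded algebras in degrees $\leq k$, and then to read off equality of the resonance varieties in degrees $\leq k-1$ from the length-three nature of the cohomology calculation.

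First, since $f$ is a morphism of $\MM$ lifting $\id_{[n]}$, Theorem~\ref{thm:functorial} gives a graded algebra homomorphism $A(f)\colon A(\M_1)\to A(\M_2)$ which is the identity on the degree-one generators $e_1,\dots,e_n$. Writing $A(\M_i)=E/I(\M_i)$ with $E=\Lambda(\k^n)$, the proof of Lemma~\ref{lem:weak map} actually shows $I(\M_1)\subseteq I(\M_2)$, so $A(f)$ is always surjective; the task reduces to showing these ideals coincide in degrees $\leq k$.

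Second, I would use the description \eqref{eq:OSrelations}: the degree-$p$ component $I(\M)^p$ is spanned by products $\partial(e_C)\cdot e_S$ with $C$ a circuit, $S\subseteq[n]$, and $|C|+|S|=p+1$, so only circuits of size at most $p+1$ contribute. A circuit of size $c$ is a minimal rank-$(c-1)$ dependent set, hence is recovered from $L_{\leq p}(\M)$ together with its atoms; equivalently, Brieskorn's decomposition $A^p(\M)=\bigoplus_{X\in L^p(\M)}A^p(\M^X)$ exhibits $A^p(\M)$ as built from the intervals $L(\M)_{\leq X}\subseteq L_{\leq p}(\M)$. Since both matroids sit on $[n]$ and the isomorphism $L_{\leq k}(\M_1)\cong L_{\leq k}(\M_2)$ is the one covering $\id_{[n]}$ on atoms (compatibly with $f$), it identifies the circuits of size $\leq k+1$, whence $I(\M_1)^p=I(\M_2)^p$ in $E^p$ for all $p\leq k$ and $A(f)^p$ is an isomorphism in that range.

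Finally, by the naturality square \eqref{eq:natural}, for every $v\in V=V(\M_1)=V(\M_2)$, $A(f)$ is a cochain map $(A(\M_1),\cdot v)\to(A(\M_2),\cdot v)$ which is an isomorphism in degrees $\leq k$ by the previous step. The cohomology $H^p(A(\M_i),\cdot v)$ for $0\leq p\leq k-1$ depends only on the three-term piece $A^{p-1}\to A^p\to A^{p+1}$, all of whose degrees lie in $\{0,\dots,k\}$, so the induced map on $H^p$ is an isomorphism for every such $p$ and every $v$. Hence $\dim_\k H^p(A(\M_1),v)=\dim_\k H^p(A(\M_2),v)$, giving $\R^p(\M_1)=\R^p(\M_2)$ as subsets of $V$ for $0\leq p\leq k-1$.

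The principal obstacle is the second step: formally verifying that the isomorphism of truncated lattices (which the hypothesis does not a priori pin down) must be the one induced by $\id_{[n]}$ on atoms, and hence identifies circuits of size $\leq k+1$ together with the corresponding pieces of the Orlik-Solomon ideal. The existence of the weak map $f$ covering $\id_{[n]}$ constrains the parallel structure enough to force this compatibility, but the argument is careful bookkeeping with flats and dependent sets rather than anything substantive.
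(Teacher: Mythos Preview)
Your proposal is correct and follows exactly the same strategy as the paper's proof: use the map $A(f)$ of cochain complexes from \eqref{eq:weak map}, observe that the lattice hypothesis forces $A(f)$ to be an isomorphism in degrees $\leq k$, and conclude via the three-term window needed for $H^p$ with $p\leq k-1$. The paper's own proof is a two-sentence sketch that simply asserts ``the hypotheses imply this is an isomorphism in degrees $\leq k$''; your second paragraph (via the degree bound on circuits contributing to $I^p$, or equivalently the Brieskorn decomposition) supplies precisely the justification the paper omits, and your flagged ``obstacle'' about the lattice isomorphism being compatible with $\id_{[n]}$ is the correct reading of the hypothesis in context.
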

\begin{proof}
Consider the map $A(f)\colon (A(\M_1),v)\to (A(\M_2),v)$, for some 
$v\in V(\M_1)$.  The hypotheses imply this is an isomorphism in
degrees $\leq k$, 
so $H^p(A(\M_1),v)\cong H^p(A(\M_2),v)$ for all $0\leq p\leq k-1$.
\qed
\end{proof}
We say that a matroid $\M$ is $k$-generic for some $k\geq0$ if
$\M$ has no circuits of size $k$ or smaller.  

\begin{proposition}\label{prop:kgeneric}
Suppose $\M$ is $k$-generic.  Then
$\R^p(\M)=\set{0}$ for all $i$, $0\leq p\leq k-1$.
\end{proposition}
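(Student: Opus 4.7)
The plan is to compare the Orlik--Solomon complex $(A(\M),\cdot v)$ with the Koszul complex on $\Lambda V$, using that these agree in low degrees whenever $\M$ is $k$-generic. The key observation is that every circuit $C$ of $\M$ has $|C|\ge k+1$, so each generator $\partial(e_C)$ of the defining ideal $I(\M)\subseteq\Lambda V$ has degree at least $k$; consequently $I(\M)^p=0$ for all $p<k$. Categorically, the identity on $[n]$ defines a (vacuously) complete weak map $\U_{n,n}\to\M$, and by Theorem~\ref{thm:functorial} the induced algebra map $\Lambda V=A(\U_{n,n})\to A(\M)$ is the natural surjection, an isomorphism of graded vector spaces in degrees $\le k-1$. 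By naturality \eqref{eq:natural} it is a chain map for $\cdot v$ for every $v\in V$.

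With this in hand, I would form the short exact sequence of cochain complexes
\begin{equation*}
0\longrightarrow(I(\M),\cdot v)\longrightarrow(\Lambda V,\cdot v)\longrightarrow(A(\M),\cdot v)\longrightarrow 0,
\end{equation*}
which is well defined because $I(\M)$ is a two-sided ideal, hence stable under $\cdot v$. For any $v\ne 0$, the middle complex is acyclic: after fixing a linear splitting $V=\k v\oplus V'$, the decomposition $\Lambda V=\Lambda V'\oplus v\cdot\Lambda V'$ gives $\ker(\cdot v)=v\cdot\Lambda V'=\im(\cdot v)$. The long exact sequence in cohomology then collapses to isomorphisms $H^p(A(\M),v)\cong H^{p+1}(I(\M),\cdot v)$. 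Since $I(\M)$ is concentrated in degrees $\ge k$, the right-hand side vanishes whenever $p+1<k$, so $H^p(A(\M),v)=0$ for every nonzero $v$ in this range. The origin $v=0$ always satisfies $0\in\R^p(\M)$ whenever $A^p(\M)\ne 0$, which produces the equality $\R^p(\M)=\{0\}$ as claimed.

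The point I expect to require the greatest care is the top of the claimed range, where the cycles of $(I(\M),\cdot v)$ first become relevant: the connecting isomorphism gives $H^{k-1}(A(\M),v)\cong\ker\bigl(\cdot v\mid_{I(\M)^k}\bigr)$, so one must argue that this kernel is trivial for nonzero $v$. I would attack this endpoint via Lemma~\ref{lem:slice} applied to the weak map $\U_{n,n}\to\M$, which reduces the problem to verifying that $k$-genericity makes $L_{\le k}(\M)$ and $L_{\le k}(\U_{n,n})$ canonically isomorphic; this in turn is a routine closure computation showing that every sufficiently small subset of $[n]$ is already a flat of $\M$ in the absence of small circuits.
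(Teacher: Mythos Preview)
Your short exact sequence argument cleanly handles the range $0\le p\le k-2$: the connecting isomorphism $H^p(A(\M),v)\cong H^{p+1}(I(\M),v)$ together with $I(\M)^{<k}=0$ gives vanishing there immediately. This is a bit more explicit than the paper's proof, which simply invokes Lemma~\ref{lem:slice} for the whole range at once; but at the endpoint $p=k-1$ you fall back on exactly the paper's approach, and here both arguments share the same gap.

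The ``routine closure computation'' you propose does not go through: a $k$-element subset of a $k$-generic matroid need not be a flat. Take $\M=\U_{2,3}$. Its only circuit is $\set{1,2,3}$, so it is $2$-generic, yet no $2$-element subset is closed---each has closure $[3]$. Thus $L_{\le 2}(\U_{2,3})\not\cong L_{\le 2}(\U_{3,3})$, and Lemma~\ref{lem:slice} does not apply with this value of $k$. Worse, the Proposition itself fails at the top of the stated range: $\U_{2,3}$ is connected of rank $2$, so $\R^1(\U_{2,3})=\oV$ by Proposition~\ref{prop:top}, not $\set{0}$. In your own language, $I^2$ is spanned by $\partial e_{123}$, and one computes $v\cdot\partial e_{123}=(v_1+v_2+v_3)\,e_{123}$, so $\ker(\cdot v\mid_{I^2})\ne 0$ exactly when $v\in\oV$---the kernel you needed to be trivial is not. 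What actually holds, and what your SES argument correctly proves, is the sharp statement that $k$-genericity forces $\R^p(\M)=\set{0}$ for $0\le p\le k-2$; equivalently, one applies Lemma~\ref{lem:slice} with $k-1$ in place of $k$, since subsets of size at most $k-1$ genuinely are flats. Example~\ref{ex:uniform} is consistent with this corrected range.
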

\begin{proof}
Let $F_n$ denote the free matroid on $[n]$ (the underlying matroid of the
Boolean arrangement.)  Clearly
$A(F_n)=E$, the exterior algebra, and it is straightforward to check
that $\R^p(E)=\set{0}$ for $0\leq p\leq n$.  The claim then follows from
Lemma~\ref{lem:slice}.
\qed
\end{proof}
\begin{exm}[Uniform matroids]\label{ex:uniform}
Consider the uniform matroid $\U_{\ell,n}$ of rank $\ell$ on $[n]$.
Assume $n>\ell$.  Then $\U_{\ell,n}$ is $k$-generic for all $k<\ell$,
so $\R^p(\U_{\ell,n})=\set{0}$ for $0\leq p<\ell-1$, and 
$\R^{\ell-1}(\U_{\ell,n})=\oV$ because $\U_{\ell,n}$ is connected.
\end{exm}

\subsection{Sums, submatroids and duals}
Let us single out three particularly well-behaved constructions.
For convenience, if $X\in L(\M)$, we abuse notation and identify
$V(\M_X)$ with its image as a coordinate subspace in $V(\M)$, supported
in the coordinates indexed by $X$.
\begin{theorem}\label{thm:res_basics}
Suppose $\M$ is a matroid of rank $\ell$ on $[n]$.  Let $X$ be any flat
of $\M$, and $\M^\perp$ the dual matroid.  Then:
\[
\begin{array}{lll} 
\toprule
& \text{Construction~~~} & \text{Resonance}\\ \midrule
(1) & \M=\M_1\oplus \M_2 & 
\R^k(\M)=\bigcup_{p+q=k}\R^p(\M_1)\times \R^q(\M_2) \\
(2) & \text{submatroids } & 
\R^p(\M_X)\subseteq  \R^p(\M)\cap V(\M_X)\text{~for all $p\geq 0$}\\
(3) & \text{duality} & \R^{\ell-p}(\M)\cap (\k^\times)^n = 
\R^{n-\ell-p}(\M^\perp)\cap (\k^\times)^n, 
\text{~for $p\geq 0$}\\ \bottomrule
\end{array}
\]
\end{theorem}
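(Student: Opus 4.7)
The three parts of the theorem are of rather different character, so I would dispatch each in turn. For (1), the plan is Künneth: the OS algebra of a direct sum factors as a tensor product $A(\M_1\oplus\M_2)\cong A(\M_1)\otimes_\k A(\M_2)$, since the circuits of $\M_1\oplus\M_2$ are exactly the disjoint union of those of $\M_1$ and $\M_2$. For $v=v_1+v_2$ with $v_i\in V(\M_i)$, the Aomoto differential $\cdot v$ becomes the total differential of the tensor product of the complexes $(A(\M_i),\cdot v_i)$, and Künneth over the field $\k$ gives
\[
H^k(A(\M),\cdot v)\cong \bigoplus_{p+q=k} H^p(A(\M_1),\cdot v_1)\otimes H^q(A(\M_2),\cdot v_2).
\]
So $v\in\R^k(\M)$ iff some summand on the right is nonzero, which is the claimed union. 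For (2), the result is immediate from Proposition~\ref{prop:submatroid}: for $v\in V(\M_X)$, the complex $(A(\M_X),\cdot v)$ is a split subcomplex of $(A(\M),\cdot v)$, so $H^p$ of the former is a direct summand of $H^p$ of the latter, and nonvanishing transfers upward.

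The substantive content is (3). The plan is to promote matroid Gale duality to a chain-level duality between the Aomoto complexes of $\M$ and $\M^\perp$ on the open torus: for each $v\in(\k^\times)^n$, I would construct a perfect pairing
\[
A^{\ell-p}(\M)\otimes_\k A^{n-\ell-p}(\M^\perp)\to \k
\]
that intertwines differentiation by $v$ on each side with its transpose on the other, and hence induces an isomorphism $H^{\ell-p}(A(\M),\cdot v)\cong H^{n-\ell-p}(A(\M^\perp),\cdot v)^\vee$ for every $p\geq0$. The natural approach is to fix a linear order on $[n]$ and work in the nbc bases of the two algebras, exploiting the complementation bijection between nbc sets of $\M$ of size $\ell-p$ and nbc sets of $\M^\perp$ of size $n-\ell-p$; the pairing is weighted by a suitable monomial in the coordinates of $v$ (and their inverses) so that the adjoint of $\cdot v$ on one side is $\cdot v$ on the other.

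The main obstacle will be verifying the chain-map compatibility: this reduces to a sign-careful combinatorial identity relating broken circuits in $\M$ and $\M^\perp$, together with a monomial in $v_1^{\pm1},\ldots,v_n^{\pm1}$. The torus hypothesis is crucial, since the weights in the pairing involve inverse coordinates; off $(\k^\times)^n$ the construction is not even defined, and indeed the equality of resonance sets fails in general (as is already visible from Part~(2), where restriction to a flat can create new resonance components supported on coordinate subspaces). Once the chain-level duality is established, passing to cohomology and intersecting with $(\k^\times)^n$ yields the stated equality of resonance sets.
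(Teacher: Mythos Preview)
Your treatment of (1) and (2) is correct and matches the paper: it cites Papadima--Suciu for the K\"unneth computation in (1) and invokes Proposition~\ref{prop:submatroid} for (2), exactly as you do.

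For (3), however, your plan has a genuine gap. The perfect pairing
\[
A^{\ell-p}(\M)\otimes_\k A^{n-\ell-p}(\M^\perp)\to\k
\]
that you propose cannot exist, because in general $\dim_\k A^{\ell-p}(\M)\neq\dim_\k A^{n-\ell-p}(\M^\perp)$. Take $\M=\U_{2,3}$, so $\ell=2$, $n=3$, and $\M^\perp=\U_{1,3}$: with $p=0$ one has $\dim_\k A^2(\U_{2,3})=2$ but $\dim_\k A^1(\U_{1,3})=1$. The ``complementation bijection between nbc sets'' you appeal to also fails below top degree. In the same example (order $1<2<3$), the nbc $2$-sets of $\U_{2,3}$ are $\{1,2\}$ and $\{1,3\}$; their complements $\{3\}$ and $\{2\}$ are exactly the broken circuits of $\U_{1,3}$, hence not nbc. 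Complementation exchanges nbc \emph{bases} of $\M$ and $\M^\perp$ (with the order reversed), but not arbitrary nbc independent sets.

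What \emph{is} true, and what the theorem asserts, is that the \emph{cohomology} groups have matching dimensions for $v\in(\k^\times)^n$; but this is not visible from a pairing on the Orlik--Solomon chain groups themselves. The paper simply cites \cite{De01}, where the argument runs through a different chain model (a ``Tutte complex'' and its combinatorial Laplacian) whose graded pieces \emph{do} match under Gale duality; the torus hypothesis enters because the relevant isomorphisms are built from the diagonal matrix with entries $v_1,\ldots,v_n$ and its inverse. Your instinct to weight by monomials in $v_i^{\pm1}$ is in the right spirit, but to carry it out you would first need to replace $(A(\M),\cdot v)$ and $(A(\M^\perp),\cdot v)$ by quasi-isomorphic complexes with the correct graded dimensions, and then construct the pairing there.
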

\begin{proof}
$(1)$ is due to Papadima and Suciu~\cite[Prop.~13.1]{PS10},
since if $\M=\M_1\oplus \M_2$, then $A(\M)=A(\M_1)\otimes_\k A(\M_2)$.
Claim $(2)$ follows directly from Proposition~\ref{prop:submatroid}.
Claim $(3)$ appears as Theorem~27 in \cite{De01}.  
\qed
\end{proof}
\subsection{Local components}
If $X$ is a flat of an arrangement $\M$ on $[n]$, 
we define a partition $\pi(X)$: let $X=X_1\vee X_2\cdots \vee X_r$
be the decomposition of $X$ into connected components.  Let $\pi(X)$ be
the partition given by $i\sim j\Leftrightarrow \set{i,j}\subseteq X_k$, 
for all $i,j,k$, and $i\sim i$ if $i\not\in X$.

\begin{proposition}\label{prop:local}
For any matroid $\M$, then for any flat $X\in L_q(\M)$, 
there is an equality $P_{\pi(X)}= V_X\cap R^p(\M)$ for all
$p$ with $q-r\leq p\leq\ell$, where $r=\abs{\pi(X)}$.
\end{proposition}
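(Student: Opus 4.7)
The plan is to use the decomposition of $\M_X$ into its connected components together with Theorem~\ref{thm:res_basics}. Write $X = X_1 \vee \cdots \vee X_r$ into connected flats, with $q_k = \rank(\M_{X_k})$ summing to $q$. Then $\M_X = \bigoplus_k \M_{X_k}$, and the defining equations of $P_{\pi(X)}$ identify it with $\bigoplus_k \oV(\M_{X_k})$ inside $V_X = V(\M_X)$.

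For the inclusion $P_{\pi(X)} \subseteq V_X \cap \R^p(\M)$, decompose $v \in P_{\pi(X)}$ as $v = \sum_k v_k$ with $v_k \in \oV(\M_{X_k})$. Each $\M_{X_k}$ is connected of rank $q_k$, so Proposition~\ref{prop:top} gives $v_k \in \R^{q_k - 1}(\oA(\M_{X_k}))$. The long exact sequence attached to \eqref{eq:proj ses} (together with $\oA^{q_k}(\M_{X_k}) = 0$ and $A^{q_k+1}(\M_{X_k}) = 0$) transfers this to $H^p(A(\M_{X_k}), \cdot v_k) \neq 0$ for $p \in \{q_k - 1, q_k\}$; in other words, $v_k \in \R^{q_k-1}(\M_{X_k}) \cap \R^{q_k}(\M_{X_k})$. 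Theorem~\ref{thm:res_basics}(1) then assembles these into $v \in \R^p(\M_X)$ for every $p \in [q - r, q]$ by choosing each $p_k \in \{q_k - 1, q_k\}$ to sum to $p$. Theorem~\ref{thm:res_basics}(2) lifts this to $v \in \R^p(\M)$ in the same range, and propagation (Theorem~\ref{thm:propagation}) extends membership up to $p = \ell$.

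For the reverse inclusion $V_X \cap \R^p(\M) \subseteq P_{\pi(X)}$, suppose $v \in V_X$ has some component $v_k \notin \oV(\M_{X_k})$. Yuzvinsky's containment $\R^p \subseteq \oV$ applied to $\M_{X_k}$ makes $(A(\M_{X_k}), \cdot v_k)$ acyclic, and K\"unneth (or Theorem~\ref{thm:res_basics}(1)) propagates this acyclicity to $(A(\M_X), \cdot v)$. The remaining step is to deduce $v \notin \R^p(\M)$, and this is the main obstacle: Proposition~\ref{prop:submatroid} only realises the local complex as a split subcomplex of $(A(\M), \cdot v)$, so acyclicity of the summand does not on its own force vanishing of the ambient cohomology. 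To close the argument one would have to control the complementary piece $A(\M)/A(\M_X)$ -- for example, via a weak-map splitting analogous to Proposition~\ref{prop:submatroid} that folds the individual $X_k$ to a point -- or, in the complex-realizable case, invoke Theorem~\ref{thm:CDO bound} to rule out further contributions to $\R^p(\M)$ coming from irreducible flats not contained in $X$.
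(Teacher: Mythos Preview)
Your forward inclusion is correct and parallels the paper's argument. The paper streamlines matters by first reducing to the case of a connected flat $X$ via the product formula (Theorem~\ref{thm:res_basics}(1)); your explicit choice of exponents $p_k \in \{q_k-1, q_k\}$ summing to $p$ is exactly what that reduction unpacks to.

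For the reverse inclusion you have correctly located a gap in your own argument, but the fix is far simpler than controlling the quotient $A(\M)/A(\M_X)$ or invoking Theorem~\ref{thm:CDO bound}. After reducing to connected $X$, so that $P_{\pi(X)} = \overline{V}_X$, the paper simply uses the final containment of Theorem~\ref{thm:propagation}: one always has $\R^p(\M) \subseteq \overline{V}$. Intersecting with $V_X$ gives $V_X \cap \R^p(\M) \subseteq V_X \cap \overline{V}$, and the right-hand side equals $\overline{V}_X$ because a vector supported on $X$ and lying in $\overline{V}$ automatically satisfies $\sum_{i\in X} v_i = 0$. No analysis of the Aomoto complex on $\M_X$ is needed at all. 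Your attempt went astray by trying to push acyclicity information from $\M_X$ up to $\M$, when the relevant constraint---a single linear equation---already lives at the level of $\M$ itself.
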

\begin{proof}
Since 
$
\M_X=\bigoplus_{i=1}^r \M_{X_i}
$,
in view of the product formula of Theorem~\ref{thm:res_basics}(1), it
is enough
to prove the claim when $X$ is connected.  In this case, $P_{\pi(X)}=
\overline{V}_X$.  By
Proposition~\ref{prop:top}, $\R^{q-1}(\M_X)=\overline{V}_X$, which is 
contained in $\R^{q-1}(\M)\cap V_X$ by Theorem~\ref{thm:res_basics}(2).
By propagation (Theorem~\ref{thm:propagation}), 
\begin{equation}\label{eq:local}
\overline{V}_X \subseteq\R^p(\M)\cap V_X\text{~for $q-1\leq p\leq \ell$.}
\end{equation}
On the other hand, $\R^{p}(\M)\subseteq \overline{V}$ and 
$\overline{V}\cap V_X=\overline{V}_X$, so \eqref{eq:local} is an equality
for all $p$ with $q-1\leq p\leq \ell$.
\qed\end{proof}
\begin{exm}\label{ex:Xgraph}
Let us consider the dual to Example~\ref{ex:fat triangle}.  
That is, $\M^\perp=\M(G^\perp)$, where
\[
G^\perp=
\begin{tikzpicture}[baseline=(current bounding box.center),scale=0.6,
bend angle=25,vertex/.style={circle,draw,inner sep=1.3pt,fill=black}]

\node[vertex] (a) at (0,0) {};
\node[vertex] (b) at (2,0) {};
\node[vertex] (c) at (2,2) {};
\node[vertex] (d) at (0,2) {};
\node[vertex] (e) at (1,1) {};

\draw (a) to node[below] {1} (b);
\draw (b) to node[right] {2} (c);
\draw (c) to node[above] {3} (d);
\draw (d) to node[left] {4} (a);
\draw (a) to node[above] {5} (e);
\draw (e) to node[below] {6} (c);

\end{tikzpicture}.
\]
By Proposition~\ref{prop:kgeneric}, 
we see $\R^0(\M^\perp)=\R^1(\M^\perp)=\set{0}$.
By Proposition~\ref{prop:local},
 each of the circuits of size $4$ contribute
local components, so each of the $3$-dimensional linear spaces
$P_{1|2|3456}$, $P_{3|4|1256}$, $P_{5|6|1234}$
are contained in $\R^2(\M^\perp)$.  

By Theorem~\ref{thm:res_basics}(3), we see 
$\R^2(\M^\perp)$ also contains the component
$P_{12|34|56}$.  With the help of Macaulay2~\cite{M2}, we find that
the upper bound from Theorem~\ref{thm:CDO bound} is sharp, so
\[
\R^2(\M^\perp)=\Bound^2(\M^\perp)=
P_{1|2|3456}\cup P_{3|4|1256}\cup P_{5|6|1234}\cup P_{12|34|56}.
\]
Since $\M^\perp$ is irreducible of rank $4$, we have 
$\R^3(\M^\perp)=\R^4(\M^\perp)=P_{123456}$, by Proposition~\ref{prop:ends}.
\end{exm}
\begin{rmk}
As an aside, let's recall an application to the cohomology of the Milnor
fibration of a hyperplane arrangement, a full treatment of which can
be found in the paper of Papadima and Suciu~\cite{PS14}.  
The matroid $\M^\perp$ is realized
by the arrangement defined by 
\[
Q=(x_1-x_2)(x_2-x_3)(x_3-x_4)(x_1-x_4)(x_1-x_5)(x_3-x_5).
\]
Each part of $\pi=12|34|56$ has
even length, so the vector $d=(1,1,1,1,1,1)\in\R^2_{\k}(\M^\perp)$ if 
$\chr\k=2$.  From an application of the tangent cone formula (see, for example,
\cite{DS12}), it follows $-1$ is a monodromy eigenvalue of 
$H^2(F,\C)$, where $F=Q^{-1}(1)$, the Milnor fibre of the arrangement.  
Our point here is that
an apparently nontrivial cohomological phenomenon can result from 
matroid operations applied to a rather trivial example 
(Example~\ref{ex:fat triangle}).
\end{rmk}
\subsection{Deletion-contraction}
In the case of complex hyperplane arrangements, Cohen~\cite{Co02} showed that
there is a long exact sequence relating cohomology of a local system on
the complement of an arrangement, its deletion, and its restriction.  We 
will use the combinatorial analogue here in order to examine the 
behaviour of resonance varieties under deletion and contraction.

If $\M$ is a matroid and $i_0$ is an element which is not a loop, let
$\M':=\M-\set{i_0}$ and $\M'':=\M/i_0$ denote
the deletion and contraction matroids,
respectively.  From \cite[Thm.~3.65]{OTbook}, there is a short exact sequence
of $\k$-modules
\begin{equation}\label{eq:delres1}
\begin{tikzpicture}[baseline=-3pt]
\node (a) at (0,0) {$0$};
\node (b) at (1.4,0) {$A(\M')$};
\node (c) at (3,0) {$A(\M)$};
\node (d) at (5.0,0) {$A(\M'')[-1]$};
\node (e) at (6.7,0) {$0$};
\draw[->] (a) to (b);
\draw[->] (b) to node[above] {$\scriptstyle j$} (c);
\draw[->] (c) to node[above] {$\scriptstyle \phi$} (d);
\draw[->] (d) to (e);
\end{tikzpicture}
%\xymatrix@C-0.8em{
%0\ar[r] & A(\M')\ar[r]^{j} & A(\M)\ar[r]^-{\phi} & A(\M'')[-1]\ar[r] & 0,
%}
\end{equation}
where the map $\phi$ is defined on monomials by 
\[
\phi(e_{i_0}e_S)=e_S,
\]
and $\phi(e_S)=0$ if $i_0\not\in S$.  The inclusion $j$ is a ring homomorphism,
induced by the morphism $\M'\hookrightarrow\M$, so \eqref{eq:delres1}
is a sequence of $A(\M')$-modules.

For any $v\in\overline{V}(\M')$, then, we obtain a short exact sequence
of complexes,
\begin{equation}\label{eq:delres2}
\begin{tikzpicture}[baseline=-3pt]
\node (a) at (0,0) {$0$};
\node (b) at (1.7,0) {$(A(\M'),v)$};
\node (c) at (4.2,0) {$(A(\M),v)$};
\node (d) at (7.2,0) {$(A(\M''),v)[-1]$};
\node (e) at (9.2,0) {$0$};
\draw[->] (a) to (b);
\draw[->] (b) to node[above] {$\scriptstyle j$} (c);
\draw[->] (c) to node[above] {$\scriptstyle \phi$} (d);
\draw[->] (d) to (e);
\end{tikzpicture}
\end{equation}

%\begin{equation}\label{eq:delres2}
%\xymatrix@C-0.8em{
%0\ar[r] & (A(\M'),v)\ar[r]^{j} & (A(\M),v)\ar[r]^-{r} & (A(\M''),v)[-1]
%\ar[r] & 0.
%}
%\end{equation}
where we identify $\overline{V}(\M')$ and $\overline{V}(\M'')$ 
with the coordinate hyperplane
$H_{i_0}:=P_{i_0|1,\ldots,\hat{i_0},\ldots,n}\subseteq \overline{V}(\M)$.

\begin{proposition}\label{prop:delres fine}
Let $(\M, \M',\M'')$ be a deletion-contraction triple.  Then, for all 
integers $p\geq0$ and $k\geq j\geq 0$, we have the following inclusions:
\begin{align}
H_{i_0}\cap \R_k^p(\M)-\R_{j+1}^p(\M') & \subseteq \R_{k-j}^{p-1}(\M'');
\label{eq:propdelres1}\\
\R_k^{p-1}(\M'')-\R_{j+1}^p(\M) & \subseteq \R_{k-j}^{p+1}(\M');
\label{eq:propdelres2}\\
\R_k^p(\M')-\R_{j+1}^{p-2}(\M'') & \subseteq \R_{k-j}^p(\M).
\label{eq:propdelres3}
\end{align}
\end{proposition}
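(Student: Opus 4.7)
The strategy is to take the long exact sequence in cohomology associated with the short exact sequence of cochain complexes \eqref{eq:delres2}, then read off each of the three inclusions as an elementary dimension count.

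The first thing to verify is that for every point $v$ appearing on the left-hand side of any of \eqref{eq:propdelres1}--\eqref{eq:propdelres3}, the sequence \eqref{eq:delres2} is valid, i.e.\ that $v \in \overline{V}(\M')$. By Theorem~\ref{thm:propagation} every resonance variety is contained in the $\overline{V}$ of its matroid, so every point on the left-hand side lies in some $\overline{V}$; in \eqref{eq:propdelres1} the intersection with $H_{i_0}$ is explicit, while in \eqref{eq:propdelres2} the point lies in $\overline{V}(\M'')\subseteq H_{i_0}\cap \overline{V}(\M) = \overline{V}(\M')$, and in \eqref{eq:propdelres3} it already lies in $\overline{V}(\M')$. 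Hence the long exact sequence is legitimate at $v$ in all three cases.

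Next, applying $H^{*}(-,v)$ to \eqref{eq:delres2} and absorbing the $[-1]$ shift on the quotient yields the segment
\[
\cdots \to H^{p-2}(A(\M''),v) \to H^p(A(\M'),v) \xrightarrow{j^*} H^p(A(\M),v) \xrightarrow{\phi^*} H^{p-1}(A(\M''),v) \to H^{p+1}(A(\M'),v) \to \cdots .
\]
Writing $h^q(\M,v) := \dim_\k H^q(A(\M),v)$ and similarly for $\M',\M''$, exactness at each of the three middle terms is a standard $\dim\ker+\dim\operatorname{im}$ count which gives
\begin{align*}
h^p(\M,v) &\leq h^p(\M',v) + h^{p-1}(\M'',v), \\
h^{p-1}(\M'',v) &\leq h^p(\M,v) + h^{p+1}(\M',v), \\
h^p(\M',v) &\leq h^{p-2}(\M'',v) + h^p(\M,v).
\end{align*}

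Each of the three claimed inclusions now drops out of one of these inequalities. For instance, $v\in H_{i_0}\cap \R^p_k(\M)\setminus \R^p_{j+1}(\M')$ means $h^p(\M,v)\geq k$ and $h^p(\M',v)\leq j$, so the first inequality forces $h^{p-1}(\M'',v)\geq k-j$ and hence $v\in \R^{p-1}_{k-j}(\M'')$, giving \eqref{eq:propdelres1}; the second and third inequalities yield \eqref{eq:propdelres2} and \eqref{eq:propdelres3} by the same argument. There is no serious obstacle here: the only finicky step is tracking the degree-shift convention carefully so that the connecting homomorphism lands in the correct cohomological position, after which the proof reduces to a pure dimension chase in the long exact sequence.
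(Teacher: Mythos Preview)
Your proof is correct and follows essentially the same route as the paper's: take the long exact sequence of \eqref{eq:delres2}, extract the three subadditivity inequalities on cohomology dimensions, and read off each inclusion as a dimension chase. Your write-up is in fact more careful than the paper's, since you explicitly verify that the point $v$ lies in $\overline{V}(\M')=H_{i_0}$ in all three cases before invoking the sequence, and you spell out all three inequalities rather than just the first.
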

\begin{proof}
Given $v\in H_{i_0}$, consider the long exact sequence in cohomology of
\eqref{eq:delres2}.  By exactness, we have
\[
\dim_\k H^p(A(\M),v)\leq \dim_\k H^p(A(\M'),v)+\dim_\k H^{p-1}(A(\M''),v),
\]
from which the first containment easily follows.  The remaining two are obtained
in the same way.
\qed\end{proof}
We note that, by putting $j=0$ and $k=1$, we can erase the subscripts
that keep track of depth.
\begin{corollary}\label{cor:kgeneric delres}
Suppose $\M$ is $k$-generic.  Then for any element $i_0$, 
\[
\R^p(\M-\set{i_0})\subseteq\R^p(\M),
\]
for all $0\leq p\leq k$.
\end{corollary}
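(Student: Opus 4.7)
The plan is to apply Proposition~\ref{prop:delres fine}, specifically inclusion \eqref{eq:propdelres3} with the depth parameters $k=1$, $j=0$, which reads
\[
\R^p(\M-\set{i_0}) \subseteq \R^p(\M) \cup \R^{p-2}(\M/i_0).
\]
The strategy is then to show that the ``correction term'' $\R^{p-2}(\M/i_0)$ is harmless under the $k$-genericity hypothesis, reducing to at worst the point $\set{0}$, which is already in $\R^p(\M)$.

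First, since $\M$ is $k$-generic with $k\geq 1$, it has no loops (a loop would be a circuit of size one), so $\M'' := \M/i_0$ is defined and the deletion-contraction sequence \eqref{eq:delres2} applies. Next I would verify the standard matroid-theoretic fact that $\M''$ is $(k-1)$-generic: every circuit $D$ of $\M''$ is of the form $C\setminus\set{i_0}$ for some circuit $C$ of $\M$ (the case $i_0\notin C$ only makes $D$ larger), so $\abs{D}\geq \abs{C}-1\geq k$. Applying Proposition~\ref{prop:kgeneric} to $\M''$ then gives $\R^q(\M'')=\set{0}$ for $0\leq q\leq k-2$.

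For $2\leq p\leq k$, this says $\R^{p-2}(\M'')=\set{0}$. Since the zero element of $V$ makes the differential on $A(\M)$ vanish identically and $A^p(\M)\neq 0$ (as $\M$ being $k$-generic contains an independent set of size $p$), we have $0\in \R^p(\M)$, and the containment $\R^p(\M-\set{i_0})\subseteq \R^p(\M)$ follows from \eqref{eq:propdelres3}. For $p\in\set{0,1}$ the term $\R^{p-2}(\M'')$ is vacuous, and \eqref{eq:propdelres3} gives the inclusion directly.

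There is no real obstacle: the essential ingredient is the description of circuits of a contraction, which is routine, and the rest is a direct consequence of the deletion-contraction machinery already established in Proposition~\ref{prop:delres fine}. The result is truly a corollary in the strict sense.
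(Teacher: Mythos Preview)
Your proof is correct and follows exactly the paper's approach: both use inclusion \eqref{eq:propdelres3} from Proposition~\ref{prop:delres fine} together with the observation that $\M/i_0$ is $(k-1)$-generic, then invoke Proposition~\ref{prop:kgeneric}. You have simply filled in more detail than the paper's two-line proof, namely the circuit argument for $(k-1)$-genericity of the contraction and the verification that $0\in\R^p(\M)$ so that the set difference in \eqref{eq:propdelres3} causes no loss.
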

\begin{proof}
If $\M$ is $k$-generic, the contraction $\M/i_0$ is $(k-1)$-generic.  The
result then follows by Propositions~\ref{prop:delres fine}\eqref{eq:propdelres3}
and \ref{prop:kgeneric}.
\qed\end{proof}
Since all simple matroids are $1$-generic, the situation for
$p=1$ is particularly nice.  The next example shows 
that, in general, $\R^p(\M')\not\subseteq\R^p(\M)$ for $p>1$.
\begin{exm}\label{ex:delres}
Let $\M=\M(G_1)$ be the matroid of the graph shown in Figure~\ref{fig:delres}.
Deleting the horizontal edge gives the graphic matroid of 
Example~\ref{ex:Xgraph}, which we denote for the moment by $\M'$.  
We saw $W:=P_{12|34|56|7}\subseteq\R^2(\M')$: let us parameterize
\[
W=\set{(-a,a,b,-b,-c,c,0)\colon a,b,c\in\k}.
\]
If $v\in W$, then $v\not\in \R^0(\M'')$ provided either $a+b\neq 0$ or $c\neq0$.
From Proposition~\ref{prop:delres fine}\eqref{eq:propdelres3}, we
see $W\subseteq\R^2(\M)$.  We can repeat this twice to find
$W\subseteq\R^2(G_2)$ as well, where $G_2$ is the graph 
obtained from $K_5$ by deleting an edge.

However, the same argument does not allow us to conclude $W\subseteq\R^2(K_5)$:
if we contract the bottom edge, we see
$W$ is now contained in $\R^0$ of the contraction
(Proposition~\ref{prop:ends}), so Proposition~\ref{prop:delres fine} does
not apply.  Indeed, it turns out
that $W\not\subseteq\R^2(K_5)$.  Since $W\subseteq \Bound^2(\M(K_5))$, 
though, our upper bound is of no use here, and we are forced to verify this 
with a direct calculation.  
\end{exm}
\begin{figure}
\begin{center}
\begin{tikzpicture}[baseline=(current bounding box.center),scale=0.9,
vertex/.style={circle,draw,inner sep=1.3pt,fill=black}]
\node[vertex] (1) at (90:4em) {};
\node[vertex] (2) at (162:4em) {};
\node[vertex] (3) at (234:4em) {};
\node[vertex] (4) at (306:4em) {};
\node[vertex] (5) at (378:4em) {};
\node at (270:5.5em) {$\subseteq\R^2(G_1)$};
\draw[blue,thick] (3) -- node[left] {$-a$} (2) -- node[above, near start] 
{$a$} (4);
\draw[red,thick] (3) -- node[above,near end] {$-b$} (5) -- node[right] {$b$} 
(4);
\draw[green!50!black,thick] (3) -- node[left,very near end] {$-c$} (1) -- 
node[right,very near start] {$c$} (4);
\draw[very thin] (2) -- node[below] {$0$} (5);
\end{tikzpicture}
\qquad
\begin{tikzpicture}[baseline=(current bounding box.center),scale=0.9,
vertex/.style={circle,draw,inner sep=1.3pt,fill=black}]
\node[vertex] (1) at (90:4em) {};
\node[vertex] (2) at (162:4em) {};
\node[vertex] (3) at (234:4em) {};
\node[vertex] (4) at (306:4em) {};
\node[vertex] (5) at (378:4em) {};
\node at (270:5.5em) {$\subseteq\R^2(G_2)$};
\draw[blue,thick] (3) -- node[left] {$-a$} (2) -- node[above,near start] {$a$}
 (4);
\draw[red,thick] (3) -- node[above,near end] {$-b$} (5) -- node[right] {$b$} (4);
\draw[green!50!black,thick] (3) -- node[left,near end] {$-c$} (1) -- 
node[right,near start] {$c$} (4);
\draw[very thin] (2) -- node[below] {$0$} (5) -- node[above] {$0$} (1) --
node[above] {$0$} (2);
\end{tikzpicture}
\qquad
\begin{tikzpicture}[baseline=(current bounding box.center),scale=0.9,
vertex/.style={circle,draw,inner sep=1.3pt,fill=black}]
\node[vertex] (1) at (90:4em) {};
\node[vertex] (2) at (162:4em) {};
\node[vertex] (3) at (234:4em) {};
\node[vertex] (4) at (306:4em) {};
\node[vertex] (5) at (378:4em) {};
\node at (270:5.5em) {$\not\subseteq\R^2(K_5)$};
\draw[blue,thick] (3) -- node[left] {$-a$} (2) -- node[above,near start] {$a$}
 (4);
\draw[red,thick] (3) -- node[above,near end] {$-b$} (5) -- node[right] {$b$} (4);
\draw[green!50!black,thick] (3) -- node[left,near end] {$-c$} (1) -- 
node[right,near start] {$c$} (4);
\draw[very thin] (2) -- node[below] {$0$} (5) -- node[above] {$0$} (1) --
node[above] {$0$} (2);
\draw[very thin] (3) -- node[below] {$0$} (4);
\end{tikzpicture}
\end{center}
\caption{Adding edges may not preserve resonance (Example~\ref{ex:delres})}\label{fig:delres}
\end{figure}
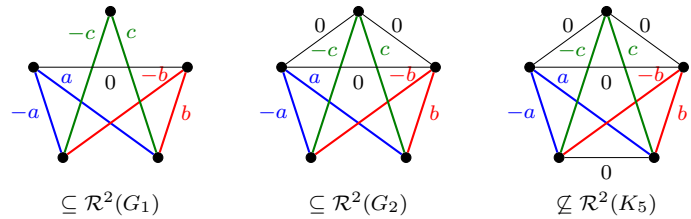

Along the same lines, we see also that a component of $\R^2(\M)$ contained
in a coordinate hyperplane need not be a component of the deletion $\R^2(\M')$.

\begin{exm}\label{ex:pyramid1}
Consider the matroid of the graph $G$ from Example~\ref{ex:weak},
ordered as shown in \eqref{eq:pyramid}.
With the help of Macaulay~2 and Theorem~\ref{thm:CDO bound}, we see
\[
W:=\set{(-a-b,b,0,a,b,a,-a,-b)\colon a,b\in\k}
\]
is a component of
$\R^2(G)$.  Let $G'$ be the graph obtained by deleting the edge with
label $0$ (see Figure~\ref{fig:delres2}).  One argues that $W\not\subseteq
\R^2(G')$ using Theorem~\ref{thm:CDO bound} as follows.  
If $v\in W$ and $a,b\neq0$, we choose an edge incident to a degree
vertex, and check that the only irreducible flats $X$ that contain it
have $v(X)\neq0$.  It follows that $v\not\in \Bound^2(G')$.
\begin{figure}
\begin{center}
\begin{tikzpicture}[baseline=(X.base),scale=0.75,
bend angle=25,vertex/.style={circle,draw,inner sep=1.3pt,fill=black}]

\node[vertex] (a) at (0,0) {};
\node[vertex] (b) at (2,0) {};
\node[vertex] (c) at (2,2) {};
\node[vertex] (d) at (0,2) {};
\node[vertex] (e) at (1,1) {};
\node (X) at (1,-1) {$W\not\subseteq\R^2(G')$};

\draw[green!50!black,thick] (a) to node[below] {$-a-b$} (b);
\draw[red,thick] (b) to node[right] {$b$} (c);
%\draw[very thin] (c) to node[above] {$0$} (d);  % deletion
\draw[blue,thick] (d) to node[left] {$a$} (a);
\draw[red,thick] (a) to node[right] {$b$} (e);
\draw[blue,thick] (b) to node[above] {$a$} (e);
\draw[green!50!black,thick] (c) to node[left] {$-a$} (e);
\draw[green!50!black,thick] (d) to node[below] {$-b$} (e);
\end{tikzpicture}
\qquad
\begin{tikzpicture}[baseline=(X.base),scale=0.75,
bend angle=25,vertex/.style={circle,draw,inner sep=1.3pt,fill=black}]

\node[vertex] (a) at (0,0) {};
\node[vertex] (b) at (2,0) {};
\node[vertex] (c) at (2,2) {};
\node[vertex] (d) at (0,2) {};
\node[vertex] (e) at (1,1) {};
\node (X) at (1,-1) {$W\subseteq\R^2(G)$};

\draw[green!50!black,thick] (a) to node[below] {$-a-b$} (b);
\draw[red,thick] (b) to node[right] {$b$} (c);
\draw[very thin] (c) to node[above] {$0$} (d);
\draw[blue,thick] (d) to node[left] {$a$} (a);
\draw[red,thick] (a) to node[right] {$b$} (e);
\draw[blue,thick] (b) to node[above] {$a$} (e);
\draw[green!50!black,thick] (c) to node[left] {$-a$} (e);
\draw[green!50!black,thick] (d) to node[below] {$-b$} (e);
\end{tikzpicture}
\qquad
% Contraction is a K4
\begin{tikzpicture}[baseline=(X.base),scale=0.75,
bend angle=25,vertex/.style={circle,draw,inner sep=1.3pt,fill=black}]
\node[vertex] (1) at (90:4em) {};
\node[vertex] (2) at (210:4em) {};
\node[vertex] (3) at (330:4em) {};
\node[vertex] (4) at (0,0) {};
\node (X) at (0,-1.7) {$W\subseteq\R^1(G'')$};
% edges now
\draw[green!50!black,thick] (4) to node[near start] (labl) {} (1);
\node[green!50!black] (text) at (40:5.5em) {$-a-b$};
% arrow from label to edge
\path[->] (text) edge  (labl);
\draw[green!50!black,thick] (2) to node[below] {$-a-b$} (3);
\draw[red,thick] (1) to node[right] {$b$} (3);
\draw[red,thick] (2) to node[right] {$b$} (4);
\draw[blue,thick] (1) to node[left] {$a$} (2);
\draw[blue,thick] (3) to node[above] {$a$} (4);

\end{tikzpicture}
\end{center}
\caption{Deletion may not preserve resonance (Example~\ref{ex:pyramid1})}
\label{fig:delres2}
\end{figure}
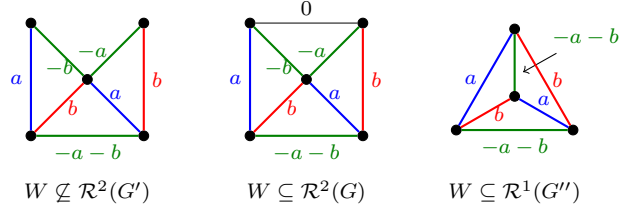
\end{exm}

\subsection{Parallel connections}\label{subsec:pc}
Here, we consider another matroid construction through which resonance varieties
can be traced.  The underlying data is the following.  Suppose $\M_1$ and
$\M_2$ are matroids on ground sets $S_1$ and $S_2$, respectively, and
$X=S_1\cap S_2$ is a modular flat of $\M_1$ and $(\M_1)_X=(\M_2)_X$.  The
(generalized) 
parallel connection $\M_1\pc_X\M_2$ is the matroid on $S_1\cup S_2$ obtained
from $\M_1\oplus \M_2$ by identifying the common copy of $X$ -- see 
\cite{Oxbook} for details.  Let $\M_{12}=(\M_1)_X=(\M_2)_X$.

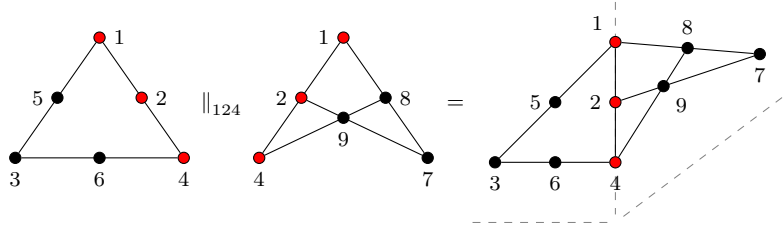
\begin{figure}
\[
\begin{tikzpicture}[baseline=(current bounding box.center),scale=0.8,
vertex/.style={circle,draw,inner sep=1.5pt,fill=black}]
\node[vertex] (3) at (-1.4,-1) [label=below:$3$] {};
\node[vertex,fill=red] (1) at (0,1) [label=right:$1$] {};
\node[vertex,fill=red] (4) at (1.4,-1) [label=below:$4$] {};
\draw (1) -- (4);
\node[vertex,fill=red] (2) at ($(1)!0.5!(4)$) [label=right:$2$] {};
\node[vertex] (6) at ($(3)!0.5!(4)$) [label=below:$6$] {};
\node[vertex] (5) at ($(1)!0.5!(3)$) [label=left:$5$] {};
\draw (1) -- (3);
\draw (3) -- (4);
\end{tikzpicture}
\pc_{124}
\begin{tikzpicture}[baseline=(current bounding box.center),scale=0.8,
vertex/.style={circle,draw,inner sep=1.5pt,fill=black}]
\node[vertex,fill=red] (4) at (-1.4,-1) [label=below:$4$] {};
\node[vertex,fill=red] (1) at (0,1) [label=left:$1$] {};
\node[vertex] (7) at (1.4,-1) [label=below:$7$] {};
\draw (1) -- (4);
\node[vertex,fill=red] (2) at ($(1)!0.5!(4)$) [label=left:$2$] {};
\node[vertex] (8) at ($(1)!0.5!(7)$) [label=right:$8$] {};
\draw (1) -- (7);
\draw[name path=a] (2) -- (7);
\draw[name path=b] (4) -- (8);
\path[name intersections={of=a and b,by=9}];
\node[vertex] at (9) [label=below:$9$] {};
\end{tikzpicture}
=
\begin{tikzpicture}[baseline=(current bounding box.center),scale=0.8,
vertex/.style={circle,draw,inner sep=1.5pt,fill=black}]
% draw vertical axis 
\node (O) at (0,-2) {};
\node (Y) at (0,1.8) {};
\draw[style=help lines,dashed] (Y) -- (O);
% first X
\node[vertex,fill=red] (1) at (0,1) [label=above left:$1$] {};
\node[vertex,fill=red] (4) at (0,-1) [label=below:$4$] {};
\draw (1) -- (4);  
\node[vertex,fill=red] (2) at ($(1)!0.5!(4)$) [label=left:$2$] {};
% then the left
\node[vertex] (3) at (-2,-1) [label=below:$3$] {};
\node[vertex] (5) at ($(1)!0.5!(3)$) [label=left:$5$] {};
\node[vertex] (6) at ($(3)!0.5!(4)$) [label=below:$6$] {};
\draw (1) -- (3) -- (4);
% and the right
\node[vertex] (7) at (2.4,0.8) [label=below:$7$] {};
\node[vertex] (8) at ($(1)!0.5!(7)$) [label=above:$8$] {};
\draw (1) -- (7);
\draw[name path=a] (2) -- (7);
\draw[name path=b] (4) -- (8);
\path[name intersections={of=a and b,by=9}];
\node[vertex] at (9) [label=below right:$9$] {};
%
% more axes
%
\node (X) at ($(O)+(3)-(4)$) {};
\node (Z) at ($(O)+(7)-(4)$) {};
\draw[style=help lines,dashed] (O) -- ($(O)!1.2!(Z)$);
\draw[style=help lines,dashed] (O) -- ($(O)!1.2!(X)$);
\end{tikzpicture}
\]
\caption{Parallel connection}\label{fig:pc}
\end{figure}

The Orlik-Solomon complex of the parallel connection can be
described as follows.  We begin with the degree-$1$ part.
Let $i_j\colon \M_{12}\to \M_{j}$ denote the inclusions, for $j=1,2$.
The identification map $\phi\colon S_1\sqcup S_2\to S_1\cup S_2$.
These are all morphisms of $\MM$, and they fit in a short exact sequence:
\begin{equation*}%\label{eq:pc_Vs1}
\begin{tikzpicture}[baseline=-3pt]
\node (a) at (0,0) {$0$};
\node (b) at (1.5,0) {$V(\M_{12})$};
\node (c) at (4.5,0) {$V(\M_1)\oplus V(\M_2)$};
\node (d) at (7.5,0) {$V(\M_1\pc_X \M_2)$};
\node (e) at (9.5,0) {$0$};
\draw[->] (a) to (b);
\draw[->] (b) to node[above] {$\scriptstyle i_1\oplus-i_2$} (c);
\draw[->] (c) to node[above] {$\scriptstyle \phi$} (d);
\draw[->] (d) to (e);
\end{tikzpicture}
\end{equation*}
By restricting, we also obtain:
\begin{equation}\label{eq:pc_Vs}
\begin{tikzpicture}[baseline=-2pt]
\node (a) at (0,0) {$0$};
\node (b) at (1.5,0) {$\oV(\M_{12})$};
\node (c) at (4.5,0) {$\oV(\M_1)\oplus V(\M_2)$};
\node (d) at (7.5,0) {$\oV(\M_1\pc_X \M_2)$};
\node (e) at (9.5,0) {$0$};
\draw[->] (a) to (b);
\draw[->] (b) to (c);
\draw[->] (c) to node[above] {$\scriptstyle \phi$} (d);
\draw[->] (d) to (e);
\end{tikzpicture}
\end{equation}

Now let $P$ denote the pushout of graded $\k$-algebras:
\begin{equation}\label{eq:pc pushout}
\begin{gathered}
\xymatrix{
\ar @{} [dr] | {\big\ulcorner}
A(\M_{12})\ar[r]^{A(i_1)}\ar[d]_{-A(i_2)} & A(\M_1)\ar[d]\\
A(\M_2)\ar[r] & P
}
\end{gathered}
\end{equation}
We can express the algebra $P$ variously as
\begin{align*}
P&\cong  \k\otimes_{A(\M_{12})}\big(A(\M_1)\otimes_\k A(\M_2)\big),\\
&\cong  \k\otimes_{A(\M_{12})}A(\M_1\oplus\M_2) &&
\text{by \cite[Prop.~13.1]{PS10};}\\
&\cong  A(\M_1\oplus\M_2)/\!\!/A(\M_{12}) &&
\text{see, e.g., \cite[p.~349]{CEbook};}\\
&\cong  A(\M_1\pc_X \M_2).
\end{align*}
\begin{rmk}
We note that, since $X$ is modular in $\M_1$,
$A(\M_1)$ is a free $A(\M_{12})$-module (see~\cite{Ter86}).  This implies
$A(\M_1\oplus\M_2)$ is also a free $A(\M_{12})$-module.  Taking Hilbert
series, we obtain
\begin{equation}\label{eq:pc hs}
h(A(\M_1\pc_X\M_2,t)=h(A(\M_1),t)h(A(\M_2),t)/h(A(\M_{12}),t).
\end{equation}
This amounts to the classical formula relating the characteristic polynomials
of the four matroids, so the diagram \eqref{eq:pc pushout} can be taken as
an algebraic refinement.
\end{rmk}
We now restrict to the classical parallel connection, where $X$ consists of 
a single element.  The description
above is particularly straightforward in this case: in particular, 
the linear map $\overline{\phi}$ in \eqref{eq:pc_Vs} is an isomorphism,
and we show next that it induces an algebra isomorphism.
In the case of complex hyperplane arrangements, the maps below
come from maps of spaces: see \cite{EF99} and \cite[\S 7]{DS12}.
If $X=\set{i}$ for some $i\in S_1\cup S_2$, let $i'$
denote its image in a copy of $S_2$ disjoint from $S_1$.  
\begin{theorem}\label{thm:pc}
If $X=\set{i}$, there is a short exact sequence
\[
\begin{tikzpicture}%[anchor=base,baseline=0pt]
\node (a) at (0,0) {$0$};
\node (b) at (2.5,0) {$A(\M_1\pc_{\set{i}}\M_2)[-1]$};
\node (c) at (5.8,0) {$A(\M_1\oplus\M_2)$};
\node (d) at (8.8,0) {$A(\M_1\pc_{\set{i}} \M_2)$};
\node (e) at (11,0) {$0$.};
\draw[->] (a) to (b);
\draw[->] (b) to (c);
\draw[->] (c) to node[above] {$\scriptstyle A(\phi)$} (d);
\draw[->] (d) to (e);
\end{tikzpicture}
\]
and an isomorphism 
$\oA(\M_1)\otimes_\k \oA(\M_2)\cong \oA(\M_1\pc_{\set{i}}\M_2)$.
\end{theorem}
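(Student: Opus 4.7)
The plan is to realise $\ker A(\phi)$ as a principal ideal and then identify that ideal with a shifted copy of $A(\M_1\pc_{\set{i}}\M_2)$ via a Hilbert series calculation. Set $u=e_i-e_{i'}\in A^1(\M_1\oplus\M_2)$, where $e_i\in A(\M_1)$ and $e_{i'}\in A(\M_2)$ denote the two copies of the element identified in the parallel connection. A direct check using $e_j^2=0$ and anticommutativity gives $u^2=0$, while $A(\phi)(u)=0$ because $\phi$ sends both $e_i$ and $e_{i'}$ to the common generator. By the pushout description \eqref{eq:pc pushout}, the surjection $A(\phi)$ has kernel exactly the principal ideal $(u)=u\cdot A(\M_1\oplus\M_2)$; since $u$ has odd degree $1$, this ideal also coincides with the image of right-multiplication $\rho_u\colon A(\M_1\oplus\M_2)\to A(\M_1\oplus\M_2)$.

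Next I will show that the complex $(A(\M_1\oplus\M_2),\rho_u)$ is acyclic. The Hilbert series identity \eqref{eq:pc hs}, using $h(A(\M_{12}),t)=1+t$, reads $h(A(\M_1\oplus\M_2),t)=(1+t)\,h(A(\M_1\pc_{\set{i}}\M_2),t)$. From $\ker A(\phi)=\im\rho_u$ this gives $h(\im\rho_u,t)=t\,h(A(\M_1\pc_{\set{i}}\M_2),t)$. Rank-nullity applied to $\rho_u$ in each degree yields $h(\ker\rho_u,t)=h(A(\M_1\oplus\M_2),t)-t^{-1}h(\im\rho_u,t)$, and \eqref{eq:pc hs} simplifies this to $h(\ker\rho_u,t)=h(\im\rho_u,t)$. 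The trivial inclusion $\im\rho_u\subseteq\ker\rho_u$ (which is exactly $u^2=0$) then forces equality, so the complex is exact.

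Exactness says that $\rho_u$ induces a graded $\k$-module isomorphism $A(\M_1\oplus\M_2)/\ker\rho_u\xrightarrow{\sim}\im\rho_u$ shifting degree by $+1$. Since $\ker\rho_u=(u)=\ker A(\phi)$, the source is canonically $A(\M_1\pc_{\set{i}}\M_2)$ and the target is $\ker A(\phi)$, so we obtain an identification $\ker A(\phi)\cong A(\M_1\pc_{\set{i}}\M_2)[-1]$. Substituting into the tautological exact sequence $0\to\ker A(\phi)\to A(\M_1\oplus\M_2)\to A(\M_1\pc_{\set{i}}\M_2)\to0$ produces the asserted sequence.

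For the second statement, I restrict $A(\phi)$ to the subalgebra $\oA(\M_1)\otimes_\k\oA(\M_2)\subseteq A(\M_1\oplus\M_2)$. Surjectivity onto $\oA(\M_1\pc_{\set{i}}\M_2)$ reduces to a degree-one check: the composition $\oV(\M_1)\oplus\oV(\M_2)\hookrightarrow V(\M_1\oplus\M_2)\xrightarrow{\phi}V(\M_1\pc_{\set{i}}\M_2)$ lands in $\oV(\M_1\pc_{\set{i}}\M_2)$ and is an isomorphism onto it, both source and target having dimension $|S_1|+|S_2|-2$, with injectivity immediate since an element of the kernel is forced to vanish in the $S_j\setminus\set{i}$ coordinates and then on the identified coordinate by the $\oV$-constraints. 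Injectivity of the restricted algebra map then follows from a Hilbert series comparison: \eqref{eq:proj ses 1} gives $h(\oA(\M_j),t)=h(A(\M_j),t)/(1+t)$, and combined with \eqref{eq:pc hs} and $h(A(\M_{12}),t)=1+t$ this yields $h(\oA(\M_1)\otimes_\k\oA(\M_2),t)=h(\oA(\M_1\pc_{\set{i}}\M_2),t)$. A surjection between graded $\k$-modules with identical Hilbert series is an isomorphism, completing the proof. The main obstacle throughout is the exactness of the $\rho_u$-complex, for which the Hilbert series identity \eqref{eq:pc hs} plays the essential role.
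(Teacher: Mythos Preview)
Your proof is correct and follows the same overall architecture as the paper's: identify $\ker A(\phi)$ with the principal ideal $(u)$, show that multiplication by $u$ on $A(\M_1\oplus\M_2)$ has $\ker=\im$, and deduce the short exact sequence; then handle the projective statement by a degree-$1$ surjectivity check together with a Hilbert series comparison via \eqref{eq:pc hs} and \eqref{eq:proj ses 1}.

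The one genuine difference is in how you establish acyclicity of $(A(\M_1\oplus\M_2),\rho_u)$. The paper simply asserts that $u=e_i-e_{i'}$ is \emph{nonresonant}: since $u$ projects to $e_i\in V(\M_1)$, which lies outside $\oV(\M_1)$ and hence outside every $\R^p(\M_1)$ by Theorem~\ref{thm:propagation}, the product formula (Theorem~\ref{thm:res_basics}(1)) forces $u\notin\R^p(\M_1\oplus\M_2)$ for all $p$. You instead extract acyclicity from the numerical identity \eqref{eq:pc hs} alone, arguing that $h(\ker\rho_u,t)=h(\im\rho_u,t)$ and then invoking $u^2=0$. Your route is more self-contained---it avoids any appeal to resonance theory---whereas the paper's route is shorter once those earlier results are in hand and ties the argument back to the paper's central theme. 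Both are perfectly valid.
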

\begin{proof}
Since $\phi$ is surjective, so is $A(\phi)$ (Theorem~\ref{thm:functorial}),
and $A(\M_1\pc_{\set{i}} \M_2)\cong A(\M_1\oplus\M_2)/(r)$, 
where $r=e_i-e_{i'}$.  
On the other hand, multiplication by $r$ gives a degree-$1$ map
$A(\M_1\oplus\M_2)/(r)\to A(\M_1\oplus\M_2)$.  Since $r$ is easily seen to be
nonresonant, this map is injective: that is, an isomorphism onto its
image, $(r)=\ker A(\phi)$.  It follows that the sequence is exact.

To prove the second claim, note the image of the restriction of $A(\phi)$ to 
$\oA(\M_1)\otimes_\k \oA(\M_2)$ is contained in $\oA(\M_1\pc_{\set{i}}\M_2)$,
since the source is generated in degree $1$, where the situation is 
that of \eqref{eq:pc_Vs}.  Since the target is also generated in degree $1$,
it follows the map is surjective.  To conclude it is
an isomorphism, we compare Hilbert series using
\eqref{eq:pc hs} and \eqref{eq:proj ses 1}.
\qed\end{proof}
The effect on resonance varieties is immediate.
\begin{corollary}\label{cor:pc}
If $X=\set{i}$, the map $\overline{\phi}\colon \oV(\M_1)\oplus
\oV(\M_2)\to \oV(\M_1\pc_X\M_2)$ restricts to an isomorphism for each $p\geq0$:
\[
\R^p(\M_1\oplus \M_2)\cong \R^p(\M_1\pc_X \M_2).
\]
%for all $p\geq0$.
\end{corollary}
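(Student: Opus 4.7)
The plan is to unwind Theorem \ref{thm:pc} and its Künneth consequence. The second assertion of that theorem gives an algebra isomorphism $\Phi\colon \oA(\M_1)\otimes_\k\oA(\M_2)\to\oA(\M_1\pc_X\M_2)$, and by construction (via $A(\phi)$) this agrees in degree $1$ with the map $\overline{\phi}$ appearing in \eqref{eq:pc_Vs}. So for each $v=(v_1,v_2)\in\oV(\M_1)\oplus\oV(\M_2)$, right-multiplication by $v$ in the source corresponds under $\Phi$ to right-multiplication by $\overline{\phi}(v)$ in the target, yielding an isomorphism of cochain complexes
\[
\bigl(\oA(\M_1)\otimes_\k\oA(\M_2),\;\cdot v\bigr)\;\cong\;\bigl(\oA(\M_1\pc_X\M_2),\;\cdot\overline{\phi}(v)\bigr).
\]

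Next, I would apply the Künneth formula to the left-hand side to obtain
\[
H^p\bigl(\oA(\M_1)\otimes_\k\oA(\M_2),\;\cdot v\bigr)\;\cong\;\bigoplus_{i+j=p}H^i(\oA(\M_1),v_1)\otimes_\k H^j(\oA(\M_2),v_2).
\]
Hence $\overline{\phi}(v)\in\R^p(\oA(\M_1\pc_X\M_2))$ if and only if there exist $i+j=p$ with $v_1\in\R^i(\oA(\M_1))$ and $v_2\in\R^j(\oA(\M_2))$; that is, $\overline{\phi}$ matches the projective resonance varieties, with the right-hand side reading exactly as the product formula of Theorem \ref{thm:res_basics}(1) for $\oA(\M_1)\otimes_\k\oA(\M_2)$.

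Finally, to pass to the unprojectivized statement of the corollary, I would use Proposition \ref{prop:projective} and \eqref{eq:Rofdecone} to rewrite each $\R^p(A(\M))$ as a union of projective pieces, and then note that $\overline{\phi}$ is compatible with this decomposition on both sides (since it is defined at the level of $\oV$ and the product formula of Theorem \ref{thm:res_basics}(1) matches the analogous product formula after deconing). I do not anticipate a serious obstacle, as the entire argument is essentially bookkeeping downstream of Theorem \ref{thm:pc}; the only point requiring care is verifying that $\Phi$ genuinely restricts to $\overline{\phi}$ on degree-$1$ generators, which is immediate from the construction of $\Phi$ as the restriction of $A(\phi)$ to projective Orlik--Solomon subalgebras.
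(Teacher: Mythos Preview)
Your proposal is correct and is essentially the argument the paper has in mind when it calls the corollary ``immediate'' from Theorem~\ref{thm:pc}: the algebra isomorphism $\oA(\M_1)\otimes_\k\oA(\M_2)\cong\oA(\M_1\pc_X\M_2)$ transports resonance via its degree-$1$ part $\overline{\phi}$, K\"unneth together with Theorem~\ref{thm:res_basics}(1) identifies the left-hand side with $\R^p(\M_1\oplus\M_2)$, and Proposition~\ref{prop:projective} handles the passage from $\oA$ to $A$. Your final step can be streamlined by applying the first assertion of Proposition~\ref{prop:projective} directly to $\M_1$, $\M_2$, and $\M_1\pc_X\M_2$ rather than unwinding \eqref{eq:Rofdecone}.
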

\section{Singular subspaces and multinets}\label{sec:singular}
So far, we have seen that the resonance varieties of a matroid in 
top and bottom degrees are easy to account for, and that various
resonance components can be obtained from these by comparing with
submatroids, duals, deletion and contraction.  The lower bounds
obtained in this way sometimes match the upper bound given by
Theorem~\ref{thm:CDO bound}.

Some quite special matroids are known to 
have additional components in $\R^1(\M)$, however.  
We will assume from now on that $\chr\k=0$.
Building
on work of Libgober and Yuzvinsky~\cite{LY00} as well as Falk~\cite{Fa97}, 
Falk and Yuzvinsky~\cite{FY07} have characterized these in terms of
auxiliary combinatorics.  This is the notion of a multinet, and 
we we briefly recall the construction from \cite{FY07} in \S\ref{ss:multinets}
with a view to higher-degree generalizations.  We refer to Yuzvinsky's 
survey~\cite{Yu12} in particular for a complete introduction.

Some first steps generalizing this theory to $\R^p(\M)$ for $p>1$ appear
in \cite{CDFV12}, in the case of complex hyperplane arrangements, as 
well as in forthcoming work of Falk~\cite{BFW}.  We interpret these 
constructions in terms of maps of Orlik-Solomon algebras, as in 
\S\ref{sec:pieces}.

\subsection{$\R^1(\M)$: multinets}\label{ss:multinets}
\begin{definition}\label{def:multinet}
If $\M$ is a matroid on $[n]$ and $k$ is an integer with $k\geq3$,
a $(k,d)$-multinet is a partition $\L:=\set{\L_1,\ldots,\L_k}$ of $[n]$
together with a set $\X\subseteq L_2^{\irr}(\M)$ 
with the following four properties:
\begin{enumerate}
\item $\abs{\L_s}=d$ for all $1\leq s\leq k$;
\item If $i,j\in [n]$ belong to different parts of the partition $\L$,
then they span a flat in $\X$;
\item For any $X\in\X$, the number $\abs{\L_s\cap X}$ is 
independent of $s$;
\item For any $i,j\in\L_s$ for some $s$, there is a sequence
$i=i_0,i_1,\ldots,i_r=j$ for which $\spn\set{i_{q-1},i_q}\not\in \X$ for all
$1\leq q \leq r$.
\end{enumerate} % maybe make matroid elements u,v instead?
\end{definition}
The original (equivalent) formulation in \cite{FY07} replaces $\M$ with its
simplification $\M_s$ and records the number of parallel elements
with a multiplicity function.  We will say that a simple matroid supports
a multinet if it is the simplification of a matroid with a partition as
in Definition~\ref{def:multinet}.

\begin{theorem}[Thms.~2.3, 2.4, \cite{FY07}]\label{thm:multinets}
$\R^1_\k(\M)$ contains an essential component if and only if 
$\M$ supports a multinet.
\end{theorem}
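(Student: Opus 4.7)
Plan. Both directions of the theorem pivot on Theorem~\ref{thm:isotropic}, which translates essential $(k-1)$-dimensional components of $\R^1(\M)$ back and forth with injective algebra maps $\oA(\U_{2,k})\hookrightarrow\oA(\M)$. The task therefore reduces to producing such an injection from a multinet and, conversely, extracting a multinet from such an injection.

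For the sufficiency direction, given a $(k,d)$-multinet $(\L,\X)$ I would set $\omega_s:=\sum_{j\in\L_s}e_j\in A^1(\M)$ for each block. Because condition~(1) forces every $\omega_s$ to have coordinate-sum $d$, the differences $\omega_s-\omega_t$ lie in $\oV(\M)$ and span a $(k-1)$-dimensional subspace $W$. The central computation is to show that $\omega_s\omega_t\in A^2(\M)$ depends only on the unordered pair $\{s,t\}$ with $s\neq t$. Expanding
\[
\omega_s\omega_t=\sum_{X\in\X}\;\sum_{\substack{i\in X\cap\L_s\\ j\in X\cap\L_t}}e_ie_j
\]
(using condition~(2) to see that every cross term is supported on some $X\in\X$), I would apply the Orlik-Solomon relations for each rank-$2$ flat $X$ together with the uniform intersection $m_X=|X\cap\L_s|$ guaranteed by condition~(3) to rewrite each inner sum as $m_X^2\,\epsilon_X$ for a canonical symmetric class $\epsilon_X\in A^2(\M)$ depending only on $X$. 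Symmetry and constancy of the resulting expression force $(\omega_s-\omega_t)(\omega_{s'}-\omega_{t'})=0$, so the linear embedding $\oV(\U_{2,k})\hookrightarrow\oV(\M)$ extends to a homomorphism $\oA(\U_{2,k})\to\oA(\M)$; Theorem~\ref{thm:isotropic} then places $W$ inside $\R^1(\M)$. To see that $W$ meets $(\k^\times)^n$, I would use condition~(4) to construct a vector in $W$ whose support is all of $[n]$.

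For the necessity direction, I would begin with an essential component $W\subseteq\R^1_\k(\M)$ of dimension $k-1\geq 2$, invoke Theorem~\ref{thm:isotropic} to obtain $\iota\colon\oA(\U_{2,k})\hookrightarrow\oA(\M)$, and read the multinet off the image. The partition $\L$ emerges by declaring $i\sim j$ whenever the coordinate functions $x_i,x_j$ agree on $W$; the candidate set $\X$ consists of the rank-$2$ irreducible flats that meet more than one block, identified via the cover package of \S\ref{ss:bound}. Conditions~(1) and~(2) then follow by inspection, and condition~(4) reduces to the indecomposability of $W$ as a component.

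The principal obstacle is multinet condition~(3), namely that $|X\cap\L_s|$ is independent of $s$ for every $X\in\X$. On the sufficiency side, this is precisely what makes the products $\omega_s\omega_t$ agree across pairs; on the necessity side, it is the combinatorial shadow of the isotropy $\oA^2(\U_{2,k})=0$ transported along $\iota$, and translates into an equality-of-rows assertion about a matrix recording intersection multiplicities. Proving this equality seems to require a careful analysis of the Aomoto complex $(A(\M),\cdot v)$ for generic $v\in W$ along the lines of \cite{LY00,FY07}, rather than any purely formal manipulation of the maps already in hand.
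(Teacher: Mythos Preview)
The paper does not prove this theorem: it is quoted from Falk--Yuzvinsky~\cite{FY07}, as the attribution in the heading indicates. The only material the paper adds is a reformulation of the sufficiency construction---recording the homomorphism $i_\L\colon A(\U_{2,k})\to A(\M)$ with $i_\L(e_s)=\frac{1}{d}\sum_{i\in\L_s}e_i$ and noting that its degree-$1$ image is $Q_\L\cap\oV$---without verifying that $i_\L$ is well-defined and without addressing necessity at all. So there is no in-paper proof against which to compare your proposal.

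That said, your sketch tracks the argument of~\cite{FY07} reasonably well, with two caveats on the sufficiency side. First, the assertion that $\omega_s\omega_t$ equals a ``symmetric'' class $m_X^2\epsilon_X$ sits uneasily with anticommutativity; what one actually proves is the three-term relation $\omega_s\omega_t-\omega_s\omega_u+\omega_t\omega_u=0$ in $A^2(\M)$ for each triple $s,t,u$, which is precisely what is needed for $i_\L$ to kill the Orlik--Solomon relations of $\U_{2,k}$. Second, condition~(4) is not what yields essentiality---any choice of nonzero scalars $a_1,\ldots,a_k$ with $\sum a_s=0$ already puts $\sum a_s\omega_s$ into $W\cap(\k^\times)^n$---rather, condition~(4) is used in~\cite{FY07} to guarantee that $W$ is a \emph{maximal} component. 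Your acknowledgment that the necessity direction, and in particular the recovery of condition~(3), requires the full Aomoto-complex analysis of~\cite{LY00,FY07} is accurate; neither the present paper nor a purely formal manipulation of Theorem~\ref{thm:isotropic} provides a shortcut.
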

Explicitly, this component is the linear space $Q_{\L}\cap\oV$ 
(Definition~\ref{def:res subspaces}).
Their construction can be interpreted in terms of maps of Orlik-Solomon
algebras as follows.  First, we note that if $\L$ is a multinet, the 
partition satisfies
the hypothesis of Proposition~\ref{prop:partition}, so there is a morphism
$p_{\L}\colon \M\to \U_{2,k}$ and a surjection
\[
A(p_\L)\colon \oA(\M)\to \oA(\U_{2,k}).
\]

The more interesting aspect is the existence of a right inverse to 
$A(p_\L)$.  
Multinets give the following construction \cite{FY07}.
\begin{proposition}
If $(\L,\X)$ is a multinet on $\M$, there is a ring homomorphism
$i_{\L}\colon A(\U_{2,k})\to A(\M)$ defined by 
\[
i_{\L}(e_s)=\frac{1}{\abs{\L_s}}
\sum_{i\in\L_s}e_i\text{~for all $1\leq s\leq k$},
\]
which restricts to a map $i_{\L}\colon \oA(\U_{2,k})\to \oA(\M)$.
\end{proposition}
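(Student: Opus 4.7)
The map $i_\L$ is prescribed on degree-$1$ generators, so it extends uniquely to an algebra homomorphism $\Lambda V(\U_{2,k}) \to A(\M)$; the content of the proposition is that the Orlik--Solomon relations descend. The circuits of $\U_{2,k}$ are the triples $\set{r,s,t} \subseteq [k]$ and the corresponding defining relations are $\partial(e_r e_s e_t) = e_s e_t - e_r e_t + e_r e_s$, so with $x_s := i_\L(e_s)$ the task reduces to verifying
\[
x_s x_t - x_r x_t + x_r x_s = 0 \text{ in } A(\M)
\]
for all distinct $r,s,t \in [k]$. Using $x_r^2 = 0$ and graded commutativity, this expression equals $(x_r - x_s)(x_r - x_t)$, the form in which the plan is to verify it.

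By property (1) of Definition~\ref{def:multinet}, writing $d = \abs{\L_s}$, expand
\[
x_r x_s = \frac{1}{d^2} \sum_{i \in \L_r,\, j \in \L_s} e_i e_j.
\]
By property (2), each pair $(i,j) \in \L_r \times \L_s$ spans a unique rank-$2$ flat in $\X$, uniqueness holding because elements in different parts of $\L$ cannot be parallel, thanks to the compatibility assumption underlying Proposition~\ref{prop:partition}. Grouping pairs by their spanning flat yields
\[
x_r x_s = \frac{1}{d^2} \sum_{X \in \X} y_r^X y_s^X, \qquad y_s^X := \sum_{i \in \L_s \cap X} e_i,
\]
and likewise for the other two products. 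The identity is thus reduced, term by term over $\X$, to showing $y_r^X y_s^X - y_r^X y_t^X + y_s^X y_t^X = 0$ in $A(\M)$ for each $X \in \X$.

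This is where property (3) enters: $\abs{\L_s \cap X}$ is independent of $s$, so both $y_r^X - y_s^X$ and $y_r^X - y_t^X$ lie in $\oV(\M_X)$. Since $X \in L_2^{\irr}(\M)$, the matroid $\M_X$ is connected of rank $2$, so $A(\M_X)$ is isomorphic to the Orlik--Solomon algebra of a rank-$2$ uniform matroid after collapsing parallel classes; combining its Hilbert series $1 + m t + (m-1) t^2$ with the exact sequence \eqref{eq:proj ses 1} gives $\oA^2(\M_X) = 0$. Consequently $(y_r^X - y_s^X)(y_r^X - y_t^X) = 0$ in $A(\M_X)$, and Proposition~\ref{prop:submatroid} transports this vanishing to $A(\M)$; expanding recovers the three-term relation for the $y^X$'s, and summing over $X \in \X$ proves $(x_r - x_s)(x_r - x_t) = 0$ as required. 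For the projective restriction, since $\oA(\U_{2,k})$ is generated in degree $1$ it suffices to check that $i_\L(\oV(\U_{2,k})) \subseteq \oV(\M)$; given $v = \sum_s v_s e_s$ with $\sum_s v_s = 0$, the image $i_\L(v)$ has total coefficient $d^{-1}\sum_s v_s \abs{\L_s} = \sum_s v_s = 0$. The principal obstacle is the reduction to a single flat at a time, which is precisely what makes the key fact $\oA^2(\M_X)=0$ available; the rest is a bookkeeping expansion once the grouping by $\X$ is in place.
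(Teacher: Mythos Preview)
The paper does not actually supply a proof of this proposition: it is stated with attribution to Falk--Yuzvinsky~\cite{FY07} and no argument is given. Your proof is correct and is essentially the standard one from that reference. The key steps---expanding $x_r x_s$ and regrouping the cross terms according to the rank-$2$ flat in $\X$ that each pair spans, then using property~(3) to place $y_r^X-y_s^X$ in $\oV(\M_X)$, and finally invoking $\oA^2(\M_X)=0$ for a connected rank-$2$ matroid---are exactly the ingredients of the Falk--Yuzvinsky argument.

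Two minor remarks. First, your appeal to Proposition~\ref{prop:submatroid} is slightly more than you need: what matters is only that the inclusion $A(\M_X)\hookrightarrow A(\M)$ is a ring homomorphism, so that a product vanishing in $A(\M_X)$ vanishes in $A(\M)$; the splitting plays no role. Second, you correctly observe (implicitly) that property~(4) of Definition~\ref{def:multinet} is not used here; it is needed only for the converse direction of Theorem~\ref{thm:multinets}, to show that essential components arise from multinets.
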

We note that $A(p_{\L})\circ i_{\L}=\id$.  From this it follows
that $i_{\L}$ is injective in cohomology.
%Then, for any $v\in \oV(\U_{2,k})$, the induced map of complexes 
%\[
%(\oA(\U_{2,k}),v))\to (\oA(\M),i_{\L}(v))
%\]
%is injective in cohomology.  
From Example~\ref{ex:uniform}, 
$\R^1(\U_{2,k})=\oV(\U_{2,k})$, and its image in $\R^1(\M)$ is just
$Q_{\L}\cap\oV$.

\begin{exm}[\cite{FY07}]\label{ex:B3}
Let $\M$ be the matroid of the $B_3$ root system, giving each of the
short roots multiplicity two.  The corresponding 
hyperplane arrangement is defined
by the polynomial $x^2y^2z^2(x-y)(x+y)(x-z)(x+z)(y-z)(y+z)$.  Numbering
the points of the matroid $\set{1,1',2,2',3,3',4,5,6,7,8,9}$, the
dependencies are shown in Figure~\ref{fig:B3}.
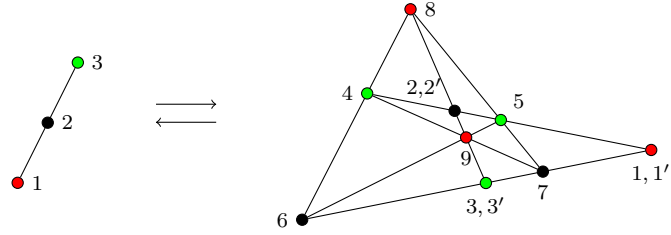
\begin{figure}
\[
\begin{tikzpicture}[baseline=(current bounding box.center),scale=0.8,
vertex/.style={circle,draw,inner sep=1.5pt,fill=black}]
\node[vertex,fill=red] (1) at (-0.5,-1) [label=right:$1$] {};
\node[vertex] (2) at (0,0) [label=right:$2$] {};
\node[vertex,fill=green] (3) at (0.5,1) [label=right:$3$] {};
\draw (1) -- (3);
\end{tikzpicture}
\qquad
\begin{tikzpicture}[scale=0.8]
\draw [->] (0,0) to (1,0);
\draw [->] (1,-0.3) to (0,-0.3);
\end{tikzpicture}
\qquad
\lower15pt\hbox{
\begin{tikzpicture}[baseline=(current bounding box.center),scale=0.8,
vertex/.style={circle,draw,inner sep=1.5pt,fill=black}]
\node[vertex,fill=red] (1) at (-0.2,3.5) [label=right:$8$] {};
\node[vertex] (4) at (-2,0) [label=left:$6$] {};
\node[vertex] (3) at (2,0.8) [label=below:$7$] {};
\node[vertex,fill=red] (a) at ($(4)!1.45!(3)$) [label=below:$1{,}\,1'$] {};
\draw[name path=t1] (1) -- (4);
\node[vertex,fill=green] (2) at ($(1)!0.4!(4)$) [label=left:$4$] {};
\draw[name path=t2] (1) -- (3);
\draw[name path=t3] (a) -- (2);
%\path[name path=t3b] (a) -- ($(a)!1.1!(2)$);
\path[name intersections={of=t3 and t2,by=5}];
\draw[name path=t4] (4) -- (a);
\draw[name path=t5] (2) -- (3);
\draw[name path=t6] (4) -- (5);
\path[name intersections={of=t5 and t6,by=6}];
\path[name path=t7b] (1) -- ($(1)!2.5!(6)$);
\path[name intersections={of=t3 and t7b,by=b}];
\node[vertex] at (b) [label=above left:$2{,}2'$] {};
\path[name intersections={of=t4 and t7b,by=c}];
\node[vertex,fill=green] at (5) [label=above right:$5$] {};
\draw (1) -- (c);
\node[vertex,fill=red] at (6) [label=below:$9$] {};
\node[vertex,fill=green] at (c) [label=below:$3{,}\,3'$] {};
\end{tikzpicture}
}
\]
\caption{The $(3,4)$-multinet for the $B_3$ root system}\label{fig:B3}
\end{figure}
The multinet $\L=\set{11'89|22'67|33'45}$ has $\X=L^{\irr}_2(\M)$,
so $Q_{\L}\cap\oV$ is a $2$-dimensional, essential component of $\R^1(\M)$.

Returning to the upper bound of Corollary~\ref{cor:ess bound},
let $\Co=L^{\irr}_2(B_3)$, an essential cover.  
By direct computation, $P_{\Co}$ has dimension
$2$, so it equals the essential component computed above.
\end{exm}

\subsection{$\R^{\geq1}(\M)$: singular subspaces}
The Multinet Theorem~\ref{thm:multinets} does not yet have 
a complete higher analogue.  Here, we indicate some first steps
in that direction, beginning with a definition from~\cite[\S3]{CDFV12}.  
\begin{definition}\label{def:singular}
A subspace $W\subseteq\oV(\M)$ is called {\em singular} if
the multiplication map $\Lambda^k(W)\to A^k(\M)$ is zero, where 
$k=\dim W$.  The {\em rank} of $W$ is the largest $q$ for which
$\Lambda^q(W)\to A^q(\M)$ is not the zero map.
\end{definition}
\begin{proposition}\label{prop:singular}
If $\phi\colon \oA(\U_{q+1,k+1})\to \oA(\M)$ is a graded homomorphism
which is injective in degree $1$, 
then the degree-$1$ part of $\im\phi$ is a singular subspace of
dimension $k$ and rank at most $q$.

Conversely, if $W$ is a singular subspace of rank $q$ in $\oV(\M)$, 
there exists
a map $\phi$ as above for which $W=(\im\phi)^1$.
\end{proposition}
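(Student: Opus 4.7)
The plan is to identify $\oA(\U_{q+1,k+1})$ explicitly as a truncated exterior algebra and then read off both directions from the resulting universal property. Write $\oV:=\oV(\U_{q+1,k+1})$, a $k$-dimensional space.

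The key preliminary step is to show that $\oA(\U_{q+1,k+1})\cong\Lambda(\oV)/(\Lambda^{q+1}\oV)$ as graded algebras. The matroid $\U_{q+1,k+1}$ is $(q+1)$-generic (its circuits are exactly the $(q+2)$-subsets), so the Orlik--Solomon ideal is trivial in degrees $\leq q$; this gives $\oA^p(\U_{q+1,k+1})=\Lambda^p\oV$ for $p\leq q$. In degree $q+1$, the formula from the proof of Lemma~\ref{lem:ker partial} writes each $\partial(e_C)$ with $|C|=q+2$ as a wedge of $q+1$ elements of $\oV$, and fixing a basepoint one sees that these $\partial(e_C)$ cover a basis of $\Lambda^{q+1}\oV$. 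Since $\oA$ is generated in degree~$1$, it follows that $\oA^p(\U_{q+1,k+1})=0$ for all $p\geq q+1$. The upshot is a clean universal property: graded algebra maps $\oA(\U_{q+1,k+1})\to B$ correspond bijectively to linear maps $\oV\to B^1$ whose image has vanishing $(q+1)$-fold products in $B$.

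The forward direction is then immediate. If $\phi$ is injective in degree~$1$, then $W=(\im\phi)^1$ has dimension $k$; for any $w_1,\ldots,w_p\in W$ with $p\geq q+1$, lifting through $\phi$ writes $w_1\cdots w_p$ as the image of an element of $\oA^p(\U_{q+1,k+1})=0$. Thus $\Lambda^{q+1}W\to\oA^{q+1}(\M)$ vanishes (so the rank is at most $q$), and since $k\geq q+1$ the top map $\Lambda^kW\to\oA^k(\M)$ also vanishes, making $W$ singular. Conversely, given a singular $W\subseteq\oV(\M)$ of dimension $k$ and rank $q$, I would choose any linear isomorphism $\alpha\colon\oV\to W$; by the universal property of the exterior algebra it extends to a graded algebra map $\tilde\alpha\colon\Lambda(\oV)\to\oA(\M)$, and the rank hypothesis forces $\tilde\alpha$ to annihilate every generator $v_1\wedge\cdots\wedge v_{q+1}$ of the ideal $(\Lambda^{q+1}\oV)$, hence that whole ideal. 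The resulting map $\phi\colon\oA(\U_{q+1,k+1})\to\oA(\M)$ agrees with $\alpha$ in degree~$1$, so has image exactly $W$ there and is injective there.

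The main obstacle is the initial identification of $\oA(\U_{q+1,k+1})$; the slightly subtle point is checking that the Orlik--Solomon relations in degree $q+1$ exhaust all of $\Lambda^{q+1}\oV$ rather than a proper subspace. Once this description is in hand, both implications of the proposition are two sides of the single universal property above and require no further computation.
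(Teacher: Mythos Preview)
Your proof is correct and follows essentially the same approach as the paper's: both identify $\oA(\U_{q+1,k+1})$ with the truncated exterior algebra $\Lambda(\oV)/(\Lambda^{q+1}\oV)$ and then read off both directions from the resulting factorization property. The paper dispatches the identification in one line (``by inspecting the Orlik--Solomon relations''), whereas you supply the details via the basepoint argument from Lemma~\ref{lem:ker partial}; otherwise the arguments are the same.
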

\begin{proof}
Let $W\subseteq\oV$ be a subspace of dimension $k$.  By inspecting the
Orlik-Solomon relations
\eqref{eq:OSrelations}, we can identify $\oA(\U_{q+1,k+1})$ with a truncated
exterior algebra $\Lambda(W)/(\Lambda^{q+1}(W))$.

If a map $\phi\colon \oA(\U_{q+1,k+1})\to\oA(\M)$ is given, let 
$W=\phi(\oV(\U_{q+1,k+1}))$.
Since $\oA(\U_{q+1,k})^p=0$ for $p>q$, its image $W$ 
is singular of rank at most $q$.

Conversely, the hypothesis implies that the natural map 
$\Lambda(W)\to A(\M)$ factors through 
$\oA(\U_{q+1,k+1})\cong\Lambda(W)/(\Lambda^{q+1}(W))$.
\qed\end{proof}

If $\chr\k=0$, then 
components of $\R^1(\M)$ are just the same as rank-$1$ singular subspaces,
by Theorem~\ref{thm:isotropic}.  
For higher rank, the situation is more subtle.  If $W$ is a singular subspace
of rank $q$ and dimension $k$, the condition
implies that the natural homomorphism $\Lambda(W)\to
\oA(\M)$ factors through $\oA(U_{q+1,k+1})$, a truncated exterior algebra.
If the resulting 
map $\oA(U_{q+1,k+1})\to\oA(\M)$ is injective in cohomology, then
$W\subseteq \R^q(\M)$.

\begin{exm}\label{ex:pyramid}
The graph $G$ from Example~\ref{ex:pyramid1} provides an interesting example
of a singular subspace.
$\R^1(G)$ consists of the four local components from the three-element
flats.  $\R^2(G)$ is more complicated.  We find two essential components
by first constructing a singular subspace.
We label the vertices of $G$ with $\set{1,2,\ldots,5}$
so that edge $i$ has vertices $(i,i+1)$ for $i=1,2,3$.  If edge $i=\set{s,t}$
and $s<t$, let $f_i=x_t-x_s\in\k[x_1,\ldots,x_5]$.  The linear forms 
$\set{f_i\colon 1\leq i\leq 8}$ define the graphic
arrangement $\A(G)$ with matroid $\M(G)$.

Following the approach of Cohen et al.\ \cite{CDFV12}, 
we observe that there is a 
linear relation of cubic polynomials
\[
f_1f_7f_8+f_2f_5f_8+f_3f_5f_6=f_4f_6f_7.
\]
This implies that the polynomial mapping $\Phi\colon U(\A(G))\subseteq\P^4\to
\P^2$ given by
\[
\Phi(x)= [f_1f_7f_8:f_2f_5f_8:f_3f_5f_6]
\]
has its image inside the projective complement of the arrangement consisting of 
the three coordinate hyperplanes together with the projective 
hyperplane orthogonal to $[1:1:1]$.  The existence of the 
induced map in cohomology $\Phi^*\colon\oA(\U_{3,4})\to
\oA(G)$ shows that $W$ is a rank-$2$ singular subspace, by
Proposition~\ref{prop:singular}, where
\[
W=\set{v\in\k^8\colon v=(a,b,c,d,b+c,c+d,d+a,a+b)\text{~and~}a+b+c+d=0}.
\]
Below, we will show $\Phi^*$ is split, which implies that
$H^2(\Phi^*)$ is injective, and $W\subseteq\R^2(G)$.  

We proceed indirectly to show that $W$ is maximal (i.e., a component.)
First, using the cover 
$\Co=L^{\irr}_2(\M)=\set{156,267,378,458}$, we obtain a $3$-dimensional
linear space
\[
P_{\Co}=\set{v\in\k^8\colon v=(c+d,d+a,a+b,b+c,a,b,c,d)\text{~and~}a+b+c+d=0}.
\]
Note $P_{\Co}$ is not maximal in $\Bound^2(G)$: for example,
if $\Co'=\set{1234,156,378}$, $P_{\Co}\subsetneq P_{\Co'}$.  Up to symmetry,
though, this is the only subspace in $\Bound^2(G)$ that properly contains
$P_{\Co}$, and it has dimension $4$.  By checking a single 
$v\in P_{\Co'}-P_{\Co}$, we see $P_{\Co'}\not\subseteq\R^2(G)$.  

Now we note that the matroid $\M(G)$ is self-dual, and 
we may identify $\M(G)$ with its dual via the permutation $\sigma=[56784123]$.
Since $W$ is essential, $\sigma(W)\subset\R^2(G)$ 
as well, by Theorem~\ref{thm:res_basics}(3).  However, $\sigma(W)=P_{\Co}$,
so $P_{\Co}\subseteq\R^2(G)$ as well.  It follows that
$W$ and $\sigma(W)$ are both (maximal) linear components of $\R^2(G)$.

In order to try to imitate the multinet construction
(Theorem~\ref{thm:multinets}), we give the inner four edges of $G$ 
multiplicity $2$, and denote the (non-simple) matroid by $\widetilde{\M}$.  Let
$\L$ be the partition of $\set{1,2,3,4,5,5',6,6',7,7',8,8'}$ given by
\begin{equation*}
\L\,=\,
\begin{tikzpicture}[anchor=base,baseline=10pt,scale=0.35,
vertex/.style={circle,draw,inner sep=1pt,fill=black}]

\node[vertex] (a) at (0,0) {};
\node[vertex] (b) at (2,0) {};
\node[vertex] (c) at (2,2) {};
\node[vertex] (d) at (0,2) {};
\node[vertex] (e) at (1,1) {};

\draw (a) to node[below] {1} (b);
%\draw (b) to node[right] {2} (c);
%\draw (c) to node[above] {$3'$} (d);
%\draw (d) to node[left] {4} (a);
%\draw (a) to node[right] {5} (e);
%\draw (b) to node[below] {6} (e);
\draw (c) to node[right] {7} (e);
\draw (d) to node[left] {8} (e);
\end{tikzpicture}
\quad\big|\quad
\begin{tikzpicture}[anchor=base,baseline=10pt,scale=0.35,
vertex/.style={circle,draw,inner sep=1pt,fill=black}]

\node[vertex] (a) at (0,0) {};
\node[vertex] (b) at (2,0) {};
\node[vertex] (c) at (2,2) {};
\node[vertex] (d) at (0,2) {};
\node[vertex] (e) at (1,1) {};

%\draw (a) to node[below] {1} (b);
\draw (b) to node[right] {2} (c);
%\draw (c) to node[above] {$3'$} (d);
%\draw (d) to node[left] {4} (a);
\draw (a) to node[below] {5} (e);
%\draw (b) to node[below] {6} (e);
%\draw (c) to node[left] {7} (e);
\draw (d) to node[above] {$8'$} (e);
\end{tikzpicture}
\,\big|\quad
\begin{tikzpicture}[anchor=base,baseline=10pt,scale=0.35,
vertex/.style={circle,draw,inner sep=1pt,fill=black}]

\node[vertex] (a) at (0,0) {};
\node[vertex] (b) at (2,0) {};
\node[vertex] (c) at (2,2) {};
\node[vertex] (d) at (0,2) {};
\node[vertex] (e) at (1,1) {};

%\draw (a) to node[below] {1} (b);
%\draw (b) to node[right] {2} (c);
\draw (c) to node[above] {3} (d);
%\draw (d) to node[left] {$4'$} (a);
\draw (a) to node[left] {$5'$} (e);
\draw (b) to node[right] {6} (e);
%\draw (c) to node[above] {7} (e);
%\draw (d) to node[below] {8} (e);
\end{tikzpicture}
\quad\big|\,
\begin{tikzpicture}[anchor=base,baseline=10pt,scale=0.35,
vertex/.style={circle,draw,inner sep=1pt,fill=black}]

\node[vertex] (a) at (0,0) {};
\node[vertex] (b) at (2,0) {};
\node[vertex] (c) at (2,2) {};
\node[vertex] (d) at (0,2) {};
\node[vertex] (e) at (1,1) {};

%\draw (a) to node[below] {$1'$} (b);
%\draw (b) to node[right] {$2'$} (c);
%\draw (c) to node[above] {3} (d);
\draw (d) to node[left] {4} (a);
%\draw (a) to node[right] {5} (e);
\draw (b) to node[below] {$6'$} (e);
\draw (c) to node[above] {$7'$} (e);
%\draw (d) to node[above] {8} (e);
\end{tikzpicture}.
\end{equation*}
Then $W\cong Q_\L\cap \oV(\widetilde{\M})$.

The map $A(\U_{3,4})\to A(\widetilde{\M})$
given by sending the $i$th generator to the sum of elements in
the $i$th block of $\L$, for 
$1\leq i\leq4$, restricts to $\Phi^*$ above.
To construct a left-inverse, recall that in Example~\ref{ex:weak}
we found a morphism $f\colon\M\to \U_{3,4}$ in $\oMM$.
A simple check shows that $-A(f)\circ\Phi^*$ is the 
identity on $\oA(\U_{34})$, so $\Phi^*$ is indeed split.
\end{exm}

We continue this example by observing that not every essential component
of $\R^2(G)$ is a singular subspace.
\begin{exm}[Example~\ref{ex:pyramid}, continued]\label{ex:pyramid2}
The second component $P_{\Co}=\sigma(W)$ can also be expressed in terms of 
a partition: $\sigma(W)\cong Q_{\L^*}\cap \oV(\widetilde{\M})$ for another
non-simple matroid $\widetilde{M}$, where
\begin{equation*}
\L^*\,=\,
\begin{tikzpicture}[anchor=base,baseline=10pt,scale=0.35,
vertex/.style={circle,draw,inner sep=1pt,fill=black}]

\node[vertex] (a) at (0,0) {};
\node[vertex] (b) at (2,0) {};
\node[vertex] (c) at (2,2) {};
\node[vertex] (d) at (0,2) {};
\node[vertex] (e) at (1,1) {};

\draw (b) to node[right] {2} (c);
\draw (c) to node[above] {3} (d);

\draw (a) to node[below] {5} (e);
\end{tikzpicture}
\,\big|\,
\begin{tikzpicture}[anchor=base,baseline=10pt,scale=0.35,
vertex/.style={circle,draw,inner sep=1pt,fill=black}]

\node[vertex] (a) at (0,0) {};
\node[vertex] (b) at (2,0) {};
\node[vertex] (c) at (2,2) {};
\node[vertex] (d) at (0,2) {};
\node[vertex] (e) at (1,1) {};

%\draw (a) to node[below] {1} (b);
%\draw (b) to node[right] {2} (c);
\draw (c) to node[above] {$3'$} (d);
\draw (d) to node[left] {4} (a);
%\draw (a) to node[right] {5} (e);
\draw (b) to node[below] {6} (e);
%\draw (c) to node[left] {7} (e);
%\draw (d) to node[below] {8} (e);
\end{tikzpicture}
\quad\big|\,
\begin{tikzpicture}[anchor=base,baseline=10pt,scale=0.35,
vertex/.style={circle,draw,inner sep=1pt,fill=black}]

\node[vertex] (a) at (0,0) {};
\node[vertex] (b) at (2,0) {};
\node[vertex] (c) at (2,2) {};
\node[vertex] (d) at (0,2) {};
\node[vertex] (e) at (1,1) {};

\draw (a) to node[below] {1} (b);
%\draw (b) to node[right] {2} (c);
%\draw (c) to node[above] {3} (d);
\draw (d) to node[left] {$4'$} (a);
%\draw (a) to node[right] {5} (e);
%\draw (b) to node[above] {6} (e);
\draw (c) to node[above] {7} (e);
%\draw (d) to node[below] {8} (e);
\end{tikzpicture}
\quad\big|\quad
\begin{tikzpicture}[anchor=base,baseline=10pt,scale=0.35,
vertex/.style={circle,draw,inner sep=1pt,fill=black}]

\node[vertex] (a) at (0,0) {};
\node[vertex] (b) at (2,0) {};
\node[vertex] (c) at (2,2) {};
\node[vertex] (d) at (0,2) {};
\node[vertex] (e) at (1,1) {};

\draw (a) to node[below] {$1'$} (b);
\draw (b) to node[right] {$2'$} (c);
%\draw (c) to node[above] {3} (d);
%\draw (d) to node[left] {4} (a);
%\draw (a) to node[right] {5} (e);
%\draw (b) to node[above] {6} (e);
%\draw (c) to node[left] {7} (e);
\draw (d) to node[above] {8} (e);
\end{tikzpicture}.
\end{equation*}
The image of $\Lambda^3(\sigma(W))$ in $\oA(G)$ is nonzero,
so this subalgebra generated by $\sigma(W)$ does not factor through
$\oA(\U_{3,4})$.
\end{exm}

However, the partitions $\L$
and $\L^*$ above look qualitatively rather similar.  One might hope,
then, that Theorem~\ref{thm:multinets} admits a combinatorial
generalization that treats both components above equally.

\begin{ques}
By the theory of multinets, every component of $\R^1(\M)$ comes
from the tautological resonance of 
a rank-$2$ matroid, via a split surjection of 
Orlik-Solomon algebras.   Does every component of $\R^p(\M)$
come from a matroid of rank $p+1$, for all $p\geq1$?  

Our last example shows that we cannot always find a uniform 
matroid with this property for $p=2$, unlike for $p=1$; however,
since the only simple matroids of rank $2$ are uniform, this
should not necessarily be seen evidence that the answer is negative.
\end{ques}

\begin{ack}
The author would like to thank Hal Schenck for the ongoing conversations
from which the main ideas for this paper emerged.
\end{ack}

%\bibliographystyle{amsplain}
%\bibliography{lhf}
\providecommand{\bysame}{\leavevmode\hbox to3em{\hrulefill}\thinspace}
\providecommand{\MR}{\relax\ifhmode\unskip\space\fi MR }
% \MRhref is called by the amsart/book/proc definition of \MR.
\providecommand{\MRhref}[2]{%
  \href{http://www.ams.org/mathscinet-getitem?mr=#1}{#2}
}
\providecommand{\href}[2]{#2}

\end{document}